\newtheorem{theorem}{Theorem}%[section]
\newtheorem{lemma}[theorem]{Lemma}
\newtheorem{proposition}[theorem]{Proposition}
\newtheorem{corollary}[theorem]{Corollary}
\newtheorem{definition}{Definition}[section]
\newtheorem{notation}[definition]{Notation}
\newtheorem{rem}{Remark}[section]
\newcommand{\eps}{\varepsilon}     %% short for \varepsilon
\newcommand{\la}{\lambda}
\newcommand{\defeq}{\coloneqq} % \newcommand{\defeq}{:=}
\newcommand{\CC}{\mathbb{C}}       %% complex numbers
\newcommand{\NN}{\mathbb{N}}  
\newcommand{\QQ}{\mathbb{Q}}       %% rationals
\newcommand{\RR}{\mathbb{R}}       %% real numbers
\newcommand{\ZZ}{\mathbb{Z}}       %% integers
\newcommand{\BB}{\mathbb{B}}
\newcommand{\DD}{\mathbb{D}}
\renewcommand{\SS}{\mathbb{S}}
\newcommand{\TT}{\mathbb{T}}
\newcommand{\PP}{\widehat{\mathbb{C}}}
\newcommand{\id}{\mathrm{id}}
\newcommand{\ph}{\varphi}          %% another short for \varphi
\newcommand{\al}{\alpha}
\newcommand{\be}{\beta}
\newcommand{\de}{\delta}
\newcommand{\De}{\Delta}
\newcommand{\ga}{\gamma}
\newcommand{\Ga}{\Gamma}
\newcommand{\ka}{\kappa}
\newcommand{\sig}{\sigma}
\renewcommand{\th}{\theta}
\newcommand{\Sig}{\Sigma}
\newcommand{\ze}{\zeta}
\newcommand{\om}{\omega}
\newcommand{\Om}{\Omega}
\newcommand{\cA}{\mathcal{A}}
\newcommand{\cE}{\mathcal{E}}
\newcommand{\cF}{\mathcal{F}}
\newcommand{\gC}{\mathscr C}       %% calligraphie
\newcommand{\gF}{\mathscr F}       %% calligraphie
\newcommand{\gH}{\mathscr H}       %% calligraphie
\newcommand{\gK}{\mathscr K}
\newcommand{\lL}{\mathscr L}       %% calligraphie
\newcommand{\gO}{\mathscr O}
\newcommand{\gM}{\mathscr M}
\newcommand{\fM}{\mathfrak M}
\newcommand{\col}{\colon}          %% for a map f \col A \to B
\newcommand{\I}{{\mathrm i}}
\newcommand{\dd}{{\mathrm d}}      %% for dx, dt, etc.
\newcommand{\ee}{{\mathrm e}}      %% for exp(1)
\newcommand{\pa}{\partial}
\newcommand{\na}{\nabla}
\newcommand{\ii}{^{-1}}
\newcommand{\demi}{\frac{1}{2}}
\newcommand{\ti}{\tilde}
\newcommand{\ens}{\enspace}
\newcommand{\IM}{\mathop{\Im m}\nolimits}
\newcommand{\RE}{\mathop{\Re e}\nolimits}
\newcommand{\ie}{\emph{i.e.}\ }
\newcommand{\eg}{\emph{e.g.}\ }
\newcommand{\resp}{\emph{resp.}\ }
\newcommand{\lhs}{{left-hand side}}
\newcommand{\rhs}{{right-hand side}}
\newcommand{\subcar}[1]{_{[#1]}}
\newcommand{\suppar}[1]{^{(#1)}}
\renewcommand{\lg}{\langle}
\newcommand{\rg}{\rangle}
\DeclareMathOperator{\conj}{conj}     %% Complex conjugacy
\DeclareMathOperator{\dist}{dist}     %% Distance
\DeclareMathOperator{\range}{range}
\newcommand{\dst}{\displaystyle}
\newcommand{\wrt}{with respect to}
\newcommand{\ov}{\overline}
\newcommand{\mean}[1]{\lg #1 \rg}
\newcommand{\norm}[1]{\Vert#1\Vert}
\newcommand{\nor}[2]{\Vert#1\Vert_{\Chol(#2)}}
\newcommand{\Br}{\BB_\rho}
\newcommand{\Cr}{\gC_\rho}
\newcommand{\Crr}{\gC_{\rho,\RR}}
\newcommand{\Cn}[1]{\gC_{#1,\rho}}
\newcommand{\Cnr}[1]{\gC_{#1,\rho,\RR}}
\newcommand{\Cp}[1]{\gC^{+}_{#1,\rho}}
\newcommand{\Cpr}[1]{\gC^{+}_{#1,\rho,\RR}}
\newcommand{\Cm}[1]{\gC^{-}_{#1,\rho}}
\newcommand{\Cmr}[1]{\gC^{-}_{#1,\rho,\RR}}
\newcommand{\Cpm}[1]{\gC^{(\pm)}_{#1,\rho}}
\newcommand{\Cpmr}[1]{\gC^{(\pm)}_{#1,\rho,\RR}}
\newcommand{\nop}[2]{\Vert#1\Vert_{#2,+}}
\newcommand{\nom}[2]{\Vert#1\Vert_{#2,-}}
\newcommand{\nopm}[2]{\Vert#1\Vert_{#2,(\pm)}}
\newcommand{\ovci}[1]{\overset{\circ}{#1}}
\newcommand{\ovi}[1]{\ovci{\raisebox{0ex}[1.45ex]{$#1$}}}
\newcommand{\str}[1]{\!\raisebox{0ex}[#1ex]{\,}}
\newcommand{\IN}{^{\textnormal{(i)}}}
\newcommand{\EX}{^{\textnormal{(e)}}}
\newcommand{\Chol}{\gC^1_{\textrm{hol}}}
\newcommand{\HSrB}{H^\infty(S_r,B)}
\newcommand{\HSrBr}{H^\infty(S_r,\Br)}
\newcommand{\HSrpB}{H^\infty(S_{r'},B)}
\newcommand{\HSrp}{H^\infty(S_{r'})}
\newcommand{\eCe}{\eps\,\gC[[\eps]]}
\newcommand{\Ce}{\gC[[\eps]]}
\DeclareMathOperator{\DC}{DC}
\newcommand{\pth}{\partial_\theta}
\newcommand{\Igs}{Invariant graphs}
\newcommand{\ig}{invariant graph}
\newcommand{\igs}{invariant graphs}
\newcommand{\Ics}{Invariant graphs}
\begin{document}
\title{There is only one KAM curve}
\author{C. Carminati, S. Marmi, D. Sauzin}

%%%%%%%%%%%%%%%%%%%%%%%%%%%%%%%%%%%%%%%%%%%%%%%%%%%%%%%%%%%%%%
%%%%%%%%%%%%%%%%%%%%%%%%%%%%%%%%%%%%%%%%%%%%%%%%%%%%%%%%%%%%%%

%%AAAAAAAAAAAAAAAAAAAAAAAAAAAAAAAAAAAAAA%%

\maketitle

\vspace{.6cm}

\begin{abstract}
We consider the standard family of area-preserving twist maps of the annulus and
the corresponding KAM curves.
Addressing a question raised by Kolmogorov, we show that, instead of viewing
these invariant curves as separate objects, each of which having its own
Diophantine frequency, one can encode them in a single function of the frequency
which is naturally defined in a complex domain containing the real Diophantine
frequencies and which is monogenic in the sense of Borel; this implies a
remarkable property of quasianalyticity, a form of uniqueness of the monogenic
continuation,
although real frequencies constitute a natural boundary for the analytic continuation
from the Weierstra\ss{} point of view because of the density of the resonances.
\end{abstract}

\vspace{.5cm}

\tableofcontents
\thispagestyle{empty}

\vfill\eject

%%%%%%%%%%%%%%%%%%%%%%%%%%%%%%%%%%%%%%%%%%%%%%%%%%%%%%%%%%%%%%%%%%
%%%%%%%%%%%%%%%%%%%%%%%%%%%%%%%%%%%%%%%%%%%%%%%%%%%%%%%%%%%%%%%%%%

\setcounter{section}{-1}

%%%%%%%%%%%%%%%%%%%%%%%%%%%%%%%%%%%%%%%%%%%
%%%%%%%%%%%%%%%%%%%%%%%%%%%%%%%%%%%%%%%%%%%
%%%%%%%%%%%%%%%%%%%%%%%%%%%%%%%%%%%%%%%%%%%

\section{Introduction}

%%%%%%%%%%%%%%%%%%%%%%%%%%%%%%%%%%%%%%%%%%%
%%%%%%%%%%%%%%%%%%%%%%%%%%%%%%%%%%%%%%%%%%%
%%%%%%%%%%%%%%%%%%%%%%%%%%%%%%%%%%%%%%%%%%%

In this article, we address what is, to our best knowledge, the oldest open
problem in KAM theory.
Indeed, in 1954, in his ICM conference \cite{Kol}, Kolmogorov asked whether the
regularity of the solutions of small divisor problems with respect to the
frequency could be investigated using appropriate analytical tools, suggesting a
connection with the theory of ``monogenic functions'' in the sense of \'Emile
Borel \cite{Bo}.\footnote{%
Towards the end of \cite{Kol}, he considers the example of a real analytic vector
field on the two-dimensional torus which depends analytically on a real
parameter~$\th$: he claims that, if the ratio of mean frequencies is not
constant, then there is a full measure set~$R$ of parameters for which the
vector field is analytically conjugate to a constant normal form, giving rise to
a discrete set of eigenfuctions~$\ph_{mn}$ which are analytic functions on the
torus; he then writes:
\emph{It is possible that the dependence of~$\ph_{mn}$ on the parameter~$\th$ on
the set~$R$ is related to the class of functions of the type of monogenic Borel
functions and, despite its everywhere-discontinuous nature, will admit
investigation by appropriate analytical tools.}
} 
We provide evidence that Kolmogorov's intuition was correct by establishing a
monogenic regularity result upon a complexified rotation number for the KAM
curves of a family of analytic twist maps of the annulus; as a consequence of
our result, these curves enjoy a property of ``$\gH^1$-quasianalyticity'' with
respect to the rotation number.

%%%%%%%%%%%%%%%%%%%%%%%%%%%%%%%

We recall that small divisor problems are at the heart of the study of
quasiperiodic dynamics: resonances are responsible for the possible divergence
of the perturbative series expansions of quasi-periodic motions and their
accumulation must be controlled in order to prove convergence.
The KAM (Kolmogorov-Arnold-Moser) theory deals with perturbations of completely
integrable Hamiltonian systems for which, when frequencies verify a
suitable Diophantine condition, the small divisor difficulty can be overcome and
one can establish the persistence of quasi-periodic solutions of Hamilton's
equations;
in the analytic case their parametric expressions depend analytically on angular
variables as well as on the perturbation parameter, however they are only
defined on closed sets in frequency space, corresponding to the Diophantine
condition. 

%%%%%%%%%%%%%%%%%%%%%%%%%%%%%%%

Borel's monogenic functions may be considered as a substitute to holomorphic
functions when the natural domain of definition is not open but can be written as
an increasing union of closed subsets of the complex plane
(monogenicity essentially amounts to Whitney differentiability in the complex
sense on these larger and larger closed sets).
According to what these closed sets are, monogenic functions may enjoy some of
the properties of holomorphic functions (\eg one may be able to use the Cauchy
integral).
As pointed out by Herman \cite{He}, Borel's motivation was probably to ensure
quasianalytic properties (unique monogenic continuation) by an appropriate choice
of the sequence of closed sets, which turns out to be difficult in a general
framework.

%%%%%%%%%%%%%%%%%%%%%%%%%%%%%%%

Kolmogorov's question about the link between small divisor problems and Borel's
monogenic functions has already been considered in small divisor
problems other than KAM theory, particularly in the context of circle maps
\cite{Ar}, \cite{He}, \cite{Ri} where the role of frequency is played by the
so-called rotation number (see also \cite{BMS}, \cite{MS1}, \cite{CM},
\cite{MS2}).
In his work on the local linearization problem of analytic diffeomorphisms of
the circle,
Arnold \cite{Ar} defined a complexified rotation number, \wrt\ which he showed
the monogenicity of the solution of the linearized problem, but his method did
not allow him to prove that the solution of the nonlinear conjugacy problem was
monogenic, because it would have required to iterate infinitely many times a
process in which the analyticity strip was reduced by a finite amount equal to
the imaginary part of the rotation number.
This point was dealt with by Herman \cite{He}, who used quite a different method
and also reformulated Borel's ideas using the modern terminology,
and by Risler \cite{Ri}, who used Yoccoz's renormalization method \cite{Yo88},
\cite{Yo95} to enlarge the set of complex rotation numbers covered by the
regularity result, passing from Siegel's Diophantine condition to Bruno's
condition.

In this article, we consider the Lagrangian formulation of KAM theory for
symplectic twist maps of the annulus \cite{SZ}, \cite{LM} and prove that the
parametrization of the invariant KAM curves can be extended to complex values of
the rotation number, that their dependence on real and complex rotation numbers
is an example of Borel's monogenic function and furthermore that it enjoys the
property introduced in \cite{MS2} under the name ``$\gH^1$-quasianalyticity''. 
This is sufficient to get an interesting uniqueness property for the monogenic
continuation of the KAM curve because such functions are determined by their
restriction to any subset of their domain of definition which has positive
linear measure.

%%%%%%%%%%%%%%%%%%%%%%%%%%%%%%%

With a view to avoid too many technicalities, we do not try to work in the most
general context. 
We restrict ourselves to the standard family of area-preserving
twist maps of the annulus because we find it suggestive enough and this allows
us to contrast our monogenicity result, which entails holomorphy with respect to
complex non-real frequencies, with another result that we prove, according to
which real frequencies do constitute a natural boundary.

We do not try either to reach optimal results for the arithmetical condition; we
content ourselves with imposing a Siegel-type Diophantine condition on the
rotation number (instead of a Bruno-type condition).

% With a view to avoid too many technicalities, we do not try to work in the most
% general context, nor do we try to reach optimal results. We thus restrict
% ourselves to the standard family of area-preserving twist maps of the annulus
% because we find it suggestive enough and we content ourselves with imposing a
% Siegel-type Diophantine condition on the rotation number (instead of a
% Bruno-type condition).

Notice that Whitney smooth dependence on \emph{real} Diophantine frequencies has
been established long ago by Lazutkin \cite{Laz} and P\"oschel \cite{Poschel} in
this kind of small divisor problem, but the question we address in this
article is quite different in its spirit:
what is at stake here is the complex extension, its regularity and the
uniqueness property this regularity implies (see
Section~\ref{seccomplexversusreal} below for a comment).
Indeed, we show that from the point of view of classical analytic continuation,
the real axis in frequency space appears as a natural boundary, because of the
density of the resonances, but our quasianalyticity result is sufficient to
prove that some sort of ``generalized analytic continuation''%
\footnote{We borrow this expression from \cite{RS}.}
through it is indeed possible: the knowledge of the parametrizations on a set of
positive linear measure of rotation numbers (real or complex) is sufficient to
determine all the parametrized KAM curves: in this sense there is only one KAM
curve, parametrized by one monogenic function of the rotation number.

%%%%%%%%%%%%%%%%%%%%%%%%%%%%%%%%%%%%%%%%%%%
%%%%%%%%%%%%%%%%%%%%%%%%%%%%%%%%%%%%%%%%%%%

\subsubsection*{Plan of the article}

%%%%%%%%%%%%%%%%%%%%%%%%%%%%%%%%%%%%%%%%%%%

In Section~\ref{secstatres} we formulate the KAM problem which is investigated
in this article: the existence of analytic invariant curves for the
standard family of area-preserving twist maps of the cylinder
and the dependence of the parametrization of the invariant curves on the
rotation number~$\om$ when~$\om$ is allowed to take real and complex values.
We state our main result, Theorem~\ref{thmtiuCunhol}, about the
$\gC^1$-holomorphy of this parametrization, and connect it with Borel's
monogenic functions and their quasianalytic properties.
We also give a Theorem~\ref{thmrat} about the impossibility of having an
analytic continuation in the Weierstra\ss{} sense through the real frequencies.

%%%%%%%%%%%%%%%%%%%%%%%%%%%%%%%%%%%%%%%%%%%

In Section~\ref{secbeginpf}, we introduce an algebra norm on the space of
$\gC^1$-holomorphic functions, which is useful to deal with nonlinear analysis (in
particular composition of functions).
In Section~\ref{seccohom}, we provide the small divisor estimates for the
linearized conjugacy equation: these estimates must be uniform \wrt\ both real
Diophantine and complex rotation numbers.

%%%%%%%%%%%%%%%%%%%%%%%%%%%%%%%%%%%%%%%%%%%

In Section~\ref{secLMscheme}, inspired by Levi-Moser's ``Lagrangian'' proof of
the KAM theorem for twist mappings \cite{LM}, we adapt their algorithm to
construct a sequence of approximations which converges to the parametrization of
the invariant curve in our Banach algebra of $\gC^1$-holomorphic functions, so
as to prove our complexified KAM theorem (Theorem~\ref{thmtiuCunhol}).

%%%%%%%%%%%%%%%%%%%%%%%%%%%%%%%%%%%%%%%%%%%

Section~\ref{secrat} is devoted to proving that the real line in the complex
frequency space is a natural boundary, in the classical sense, for the analytic
continuation of the parametrization of the invariant curve
(Theorem~\ref{thmrat}).

%%%%%%%%%%%%%%%%%%%%%%%%%%%%%%%%%%%%%%%%%%%
%%%%%%%%%%%%%%%%%%%%%%%%%%%%%%%%%%%%%%%%%%%
%%%%%%%%%%%%%%%%%%%%%%%%%%%%%%%%%%%%%%%%%%%

\section{Statement of the results}	\label{secstatres}

% \marge{Precise statements, still with as few notations as possible}

%%%%%%%%%%%%%%%%%%%%%%%%%%%%%%%%%%%%%%%%%%%
%%%%%%%%%%%%%%%%%%%%%%%%%%%%%%%%%%%%%%%%%%%
\subsection{\Igs\ for the standard family}	\label{secigs}
%%%%%%%%%%%%%%%%%%%%%%%%%%%%%%%%%%%%%%%%%%%
%%%%%%%%%%%%%%%%%%%%%%%%%%%%%%%%%%%%%%%%%%%

Let $f$ be a $1$-periodic real analytic function with zero mean value. 
We consider the standard family,
\ie the discrete dynamical system defined by
\begin{equation}	\label{eqdefstdfam}
T_\eps \col (x,y) \mapsto (x_1,y_1), \qquad
\left\{ \begin{aligned}
x_1 &= x + y + \eps f(x) \\
y_1 &= y + \eps f(x)
\end{aligned} \right.
\end{equation}
in the phase space $\TT\times\RR$, where $\TT = \RR/\ZZ$ and $\eps$ is a real
parameter
(when $f(x)=\cos(2\pi x)$, $T_\eps$ is the so-called standard map).
For $\eps$ close to~$0$, this is an exact symplectic map that we can view
as a perturbation of the integrable twist map $(x,y)\mapsto (x+y,y)$.

We are interested in the KAM curves associated with Diophantine frequencies.
For $\om \in \RR - \QQ$, we call \emph{\ig\ of frequency~$\om$ for~$T_\eps$} the
graph $G=\big\{\big(x,\ph(x)\big)\big\} \subset\TT\times\RR$ of a continuous map
$\ph\col\TT\to\RR$
such that $T_\eps$ leaves~$G$ invariant 
and the map $\Phi\col x\mapsto x+\ph(x)+\eps f(x)$ on~$\RR$ is conjugate to the
translation $x\mapsto x+\om$ by a homeomorphism of~$\RR$ of the form $\id+u$,
where~$u$ is a $1$-periodic function
(observe that~$\Phi$ is a lift of the circle map induced by the restriction
of~$T_\eps$ to~$G$).

There is a natural way of viewing an invariant graph of frequency~$\om$ as a
parametrized curve $G = \ga(\TT)$:
finding~$G$ is equivalent to finding continuous functions $u, \ph \col\TT\to\RR$
such that $\th\mapsto U(\th) \defeq \th+u(\th)$ defines a homeomorphism of~$\RR$
and the curve
$\ga \col \TT \to \TT \times \RR$
defined by $\ga(\th) = \big( U(\th), \ph(U(\th)) \big)$
satisfies
\begin{equation}	\label{eq:invcur} %{eqgaKAMom}
\ga(\th+\om) = T_\eps\big(\ga(\th)\big), \qquad \th\in\TT.
\end{equation}
Setting $v = -\om + \ph\circ U$, \ie writing the curve~$\ga$ as
\begin{equation}	\label{eqgaintermsofuv}
\ga(\th) = \big( \th + u(\th), \om + v(\th) \big)
\end{equation}
%
%with $u,v \col \TT\to\RR$ continuous, 
we see that equation~\eqref{eq:invcur} is equivalent to the system
\begin{gather}
\label{eqvintermsofuom}
v(\th) = u(\th) - u(\th-\om) \\
\label{eqseconddiffuom}
u(\th+\om) - 2 u(\th) + u(\th-\om) = \eps f\big( \th + u(\th) \big)
\end{gather}
(using the fact that $x_1 = x+y_1$ in~\eqref{eqdefstdfam} and
writing~\eqref{eqgaintermsofuv} at the points~$\th$ and $\th+\om$).
It is in fact sufficient to know the function~$u$:
any $1$-periodic solution~$u$ of the second-order difference
equation~\eqref{eqseconddiffuom} such that $\id+u$ is injective parametrizes an
\ig\ of frequency~$\om$ through formulas~\eqref{eqgaintermsofuv}--\eqref{eqvintermsofuom}.

The continuity of~$\ph$ and the irrationality of~$\om$ are enough to ensure
uniqueness:
if it exists, the \ig\ of frequency~$\om$ is unique\footnote{%
%%%%%%%%
The argument relies on the positive twist map condition verified by~$T_\eps$:
Suppose that $G$ and~$G^*$ are \igs\ of frequency~$\om$ and
define $\ph,\Phi,\ga$ and~$\ph^*,\Phi^*,\ga^*$ accordingly; 
if $G$ and~$G^*$ did not intersect, we would have $\ph<\ph^*$ or $\ph>\ph^*$ on~$\TT$,
which would imply $\Phi<\Phi^*$ or $\Phi>\Phi^*$ on~$\RR$, and hence contradict
$\om\in\RR-\QQ$ (as it is known for rotation numbers \cite{HK} that $\Phi_1<\Phi_2$ implies
$\rho(\Phi_1)<\rho(\Phi_2)$ or $\rho(\Phi_1)=\rho(\Phi_2)\in\QQ$);
we thus can find $\th_0,c\in\TT$ such that 
$\ga(c+\th_0) = \ga^*(\th_0)$,
but the shifted curve $\th \mapsto \ga(c+\th)$ is a solution
of~\eqref{eq:invcur} as well as~$\ga$ and iterating this
equation we find 
$\ga(c+\th_0+k\om) = T_\eps^k\big(\ga(c+\th_0)\big) = T_\eps^k\big(\ga^*(\th_0)\big) = \ga^*(\th_0+k\om)$ 
for all integers~$k$;
the irrationality of~$\om$ thus implies that $\ga^*$ coincides with the
shifted curve on a dense subset of~$\TT$, hence everywhere by continuity, whence
$G=G^*$.
}
%%%%%%%%
and the corresponding parametrization~$\ga$ is unique up to a shift in the
variable~$\th$.
We can then normalize the parametrization by adding the requirement that~$u$
have zero mean value:
\emph{equation~\eqref{eqseconddiffuom} cannot have more than one continuous solution~$u$
of zero mean value such that $\id+u$ is injective and finding such a solution is
equivalent to finding an \ig\ of frequency~$\om$.}

The classical KAM theorem for twist maps \cite{Moser} guarantees the existence of an \ig\ of
frequency~$\om$ for every Diophantine~$\om$ provided~$|\eps|$ is small enough.
More precisely, 
given $\om\in\RR-\QQ$ for which there exist $M>0$ and $\tau\ge0$ such that
%
% \begin{equation}	
%
$ | \om - \frac{n}{m} | \ge \frac{1}{M m^{2+\tau}} $
for all $(n,m)\in\ZZ\times\NN^*$, 
the map $T_\eps$ admits an \ig\ of frequency~$\om$ as soon as $|\eps|$ is smaller
than a constant~$\rho$ which depends only on $f,M,\tau$.
Moreover, the corresponding curve~$\ga_\om$ is known to be analytic in the
angle~$\th$ and to depend analytically on~$\eps$.

The aim of the present article is to investigate the regularity of the map $\om
\mapsto \ga_\om$, which for the moment is defined on the set of real Diophantine
numbers.
More specifically, we are interested in the quasianalytic properties of this
map;
this will lead us to extend it to certain complex values of the frequency~$\om$.

%%%%%%%%%%%%%%%%%%%%%%%%%%%%%%%%%%%%%%%%%%%
%%%%%%%%%%%%%%%%%%%%%%%%%%%%%%%%%%%%%%%%%%%
\subsection{$\gC^1$-holomorphy and $\gH^1$-quasianalyticity of a complex extension}
%%%%%%%%%%%%%%%%%%%%%%%%%%%%%%%%%%%%%%%%%%%
%%%%%%%%%%%%%%%%%%%%%%%%%%%%%%%%%%%%%%%%%%%

Throughout the article, we use the notation
\begin{equation}	\label{eqdefSRDDrho}
S_R = \{ x\in\CC/\ZZ \mid |\IM x| < R \}, \quad
\DD_\rho = \{\eps\in\CC \mid |\eps|<\rho\}
\end{equation}
for any $R,\rho>0$.
Let $R_0>0$ be such that $f$ extends holomorphically to a neighbourhood of the
closed strip~${\ov S}_{R_0}$.
Let $R\in(0,R_0)$.
We shall be interested in \igs\ whose parametrizations extend
holomorphically for $\th\in S_R$ and $\eps\in\DD_\rho$ for some $\rho>0$.
Let
\begin{equation}	\label{eqdefBBRrho}
\BB_{R,\rho} = H^\infty(S_R\times\DD_\rho)
\end{equation}	
denote the complex Banach space of all bounded holomorphic functions of
$S_R\times\DD_\rho$.

We fix $\tau>0$ and consider
\begin{equation}	\label{ineqomDiophMtau}
A_M^\RR = \left\{\, \om \in \RR \mid
\forall (n,m)\in\ZZ\times\NN^*,\; 
| \om - \frac{n}{m} | \ge \frac{1}{M m^{2+\tau}} \,\right\}
\end{equation}
for $M > M(\tau)=2\ze(1+\tau)$ (Riemann's zeta function): this is a closed
subset of the real line, of positive measure, which has empty interior and is
invariant by the integer translations.
%
% \textbf{PROVIDE REFERENCE ([Broer, Bull.\ AMS 2004]?)}
%
The KAM theorem gives us a $\rho=\rho(M)>0$ (which depends also on $f$, $R_0$,
$R$ and~$\tau$) and a function 
\[ \om\in A_M^\RR \mapsto u \in \BB_{R,\rho} \] 
such that, for $\eps$ real, the restriction of $u(.\,,\eps)$ to~$\TT$ has zero
mean value and parametrizes
through~\eqref{eqgaintermsofuv}--\eqref{eqvintermsofuom} an \ig\ of
frequency~$\om$.

Clearly, $u$ depends $1$-periodically on~$\om$, we shall thus rather
view it as a function of $q = E(\om)$, where
\[
E \col \om \in \CC \mapsto q = \ee^{2\pi\I\om} \in\CC^*,
\]
and consider that we have a function $u_M \col E(A_M^\RR) \to \BB_{R,\rho}$.
Observe that $E(A_M^\RR)$ is a subset of the unit circle~$\SS$ which avoids all
roots of unity and has Haar measure $\ge 1 - \frac{M(\tau)}{M}$.
If we consider $M_1>M$, then we get a larger set $E(A_{M_1}^\RR)$ and, by the
aforementioned uniqueness property,
the corresponding function~$u_{M_1}$ is an extension of~$u_M$, provided we take
$\rho_1\le\rho$ in the KAM result and we regard the new target space
$\BB_{R,\rho_1}$ as containing $\BB_{R,\rho}$.

%%% DESSIN 1 %%%

\begin{figure}

\begin{center}

\psfrag{AM}{$A_M^\CC$}

\epsfig{file=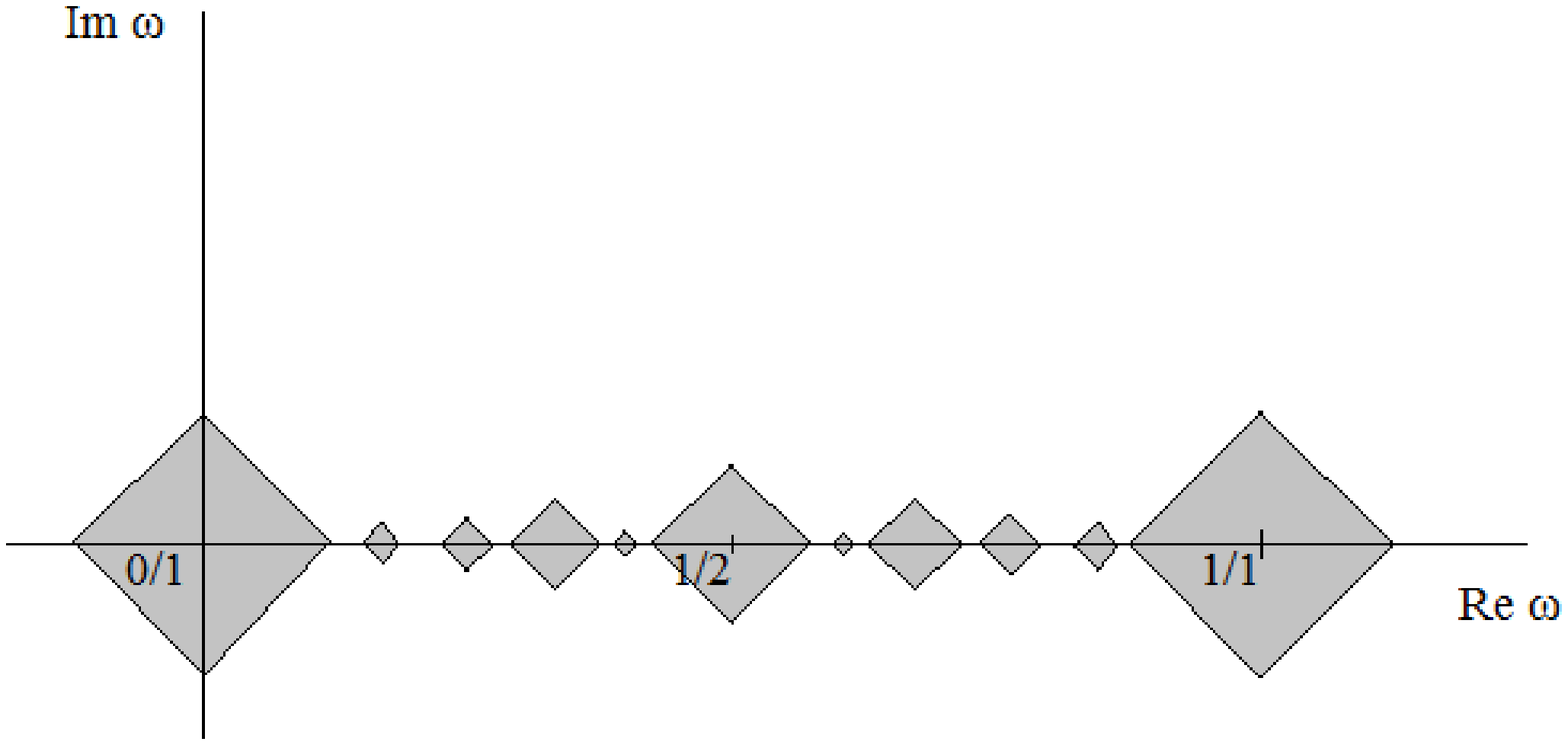,height=1.8in,angle = 0}

\end{center}

\caption{\label{figAMCC} The perfect subset $A_M^\CC\subset\CC$}

\end{figure}

%%%%%%%%%%%%%%%

%%% DESSIN 2 %%%

\begin{figure}

\begin{center}

\psfrag{OR}{$0$}

\psfrag{KI}{$K\IN$} \psfrag{KE}{$K\EX$}
\psfrag{GI}{$\Ga\IN$} \psfrag{GE}{$\Ga\EX$}

\epsfig{file=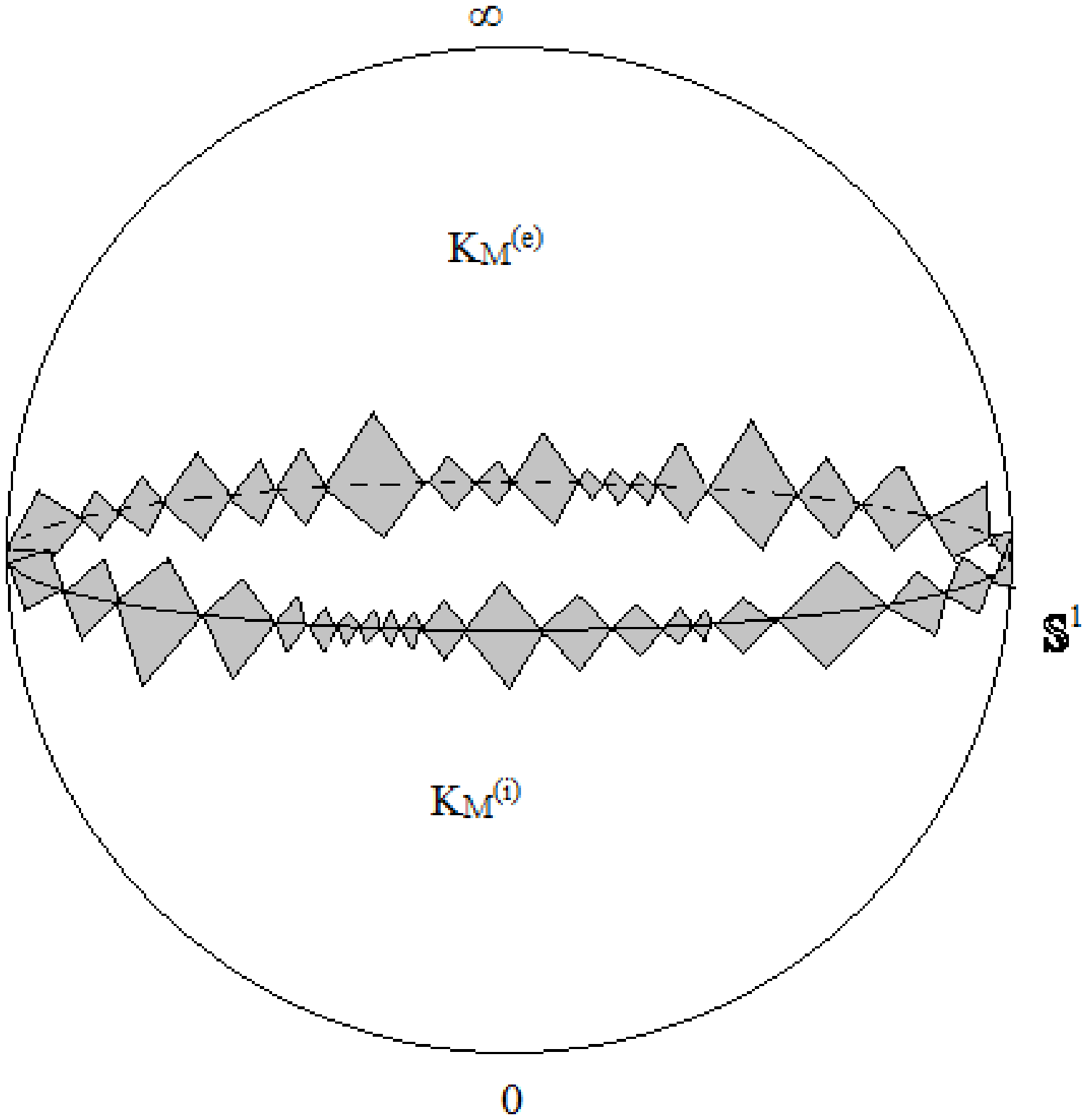,height=3.6in,angle = 0}

\end{center}

\caption{\label{figCurves} The perfect subset $K_M = K_M\IN \cup K_M\EX\subset\PP$}

\end{figure}

%%%%%%%%%%%%%%%

Let
\begin{equation}	\label{eqdefAMCC}	
A_M^\CC = \bigl\{\, \om \in\CC \mid\; \exists \om_*\in A_M^\RR
\;\text{ such that }\; |\IM \om| \ge |\om_* - \RE \om| \,\bigr\}
\end{equation}
%
% \marge{Insert slope $\ka$?}
%
and
\begin{equation}	\label{eqdefKM}
K_M = E(A_M^\CC) \cup\{0,\infty\} \subset \PP,
\end{equation}
where $\PP$ denotes the Riemann sphere---see Figures~\ref{figAMCC} and~\ref{figCurves}.
Observe that~$A_M^\CC$ is a perfect subset of~$\CC$ and~$K_M$ is a perfect subset of~$\PP$.

Our main result is that the above function~$u_M$ extends to a $\gC^1$-holomorphic function
from~$K_M$ to~$\BB_{R,\rho}$ (possibly with a smaller~$\rho$).
The reader is referred to Section~\ref{secCunhol} for the definition of the
Banach space $\Chol(K,B)$ of all $\gC^1$-holomorphic functions
from a perfect subset~$K$ of~$\CC$ or~$\PP$ to a Banach space~$B$
($\gC^1$-holomorphy essentially means complex differentiability in the sense of Whitney, \ie
real Whitney differentiability on a closed subset with partial derivatives which satisfy the
Cauchy-Riemann equations).

%%%%%%%%%%%%%%%%%%%%%%%%%%%%%%%%%%%%%%%%%%%

\begin{theorem}	\label{thmtiuCunhol}
Suppose $\tau>0$, $0<R<R_0$, $f$ real analytic and holomorphic in a
neighbourhood of~${\ov S}_{R_0}$, with zero mean value.
Then there exist $c>0$ (depending on $\tau$, $R_0$, $f$ and~$R$) and,
for each $M>2\ze(1+\tau)$, 
a function 
\[ \ti u_M \in \Chol(K_M, \BB_{R,\rho}),
\quad \text{with %$A_M^\RR \subset \RR$, $K_M\subset\PP$ as above and 
$\rho = c M^{-8}$}
\]
such that, for each $\om\in A_M^\RR$ and $\eps\in (-\rho,\rho)$, the function
$\th\in\TT \mapsto \ti u_M(\ee^{2\pi\I\om})(\th,\eps)$ has zero mean value and
parametrizes through~\eqref{eqgaintermsofuv}--\eqref{eqvintermsofuom} an \ig\ of
frequency~$\om$ for~$T_\eps$.
\end{theorem}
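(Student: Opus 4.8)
The plan is to adapt the Levi--Moser ``Lagrangian'' KAM scheme \cite{LM} to the complexified setting, working throughout in a Banach algebra of $\gC^1$-holomorphic functions on $K_M$ with values in $\BB_{R,\rho}$. The starting point is the observation that, for $\om\in A_M^\CC$, the difference operator $u\mapsto u(\th+\om)-2u(\th)+u(\th-\om)$ on Fourier series becomes multiplication by $-4\sin^2(\pi k\om)$ in the $k$-th Fourier mode, and that the Diophantine structure defining $A_M^\CC$ — namely $|\IM\om|\ge|\om_*-\RE\om|$ for some $\om_*\in A_M^\RR$ — gives a uniform lower bound of the form $|\sin(\pi k\om)|\ge \mathrm{const}\cdot |k|^{-(1+\tau)}$ on this whole complex set (this is exactly the content of the small-divisor estimates of Section~\ref{seccohom}, which I would assume). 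Consequently the linearized conjugacy equation $\De_\om w = g$ can be solved, on the nonzero Fourier modes, with a loss of $R$-width and a loss of powers of $M$ that is quantified uniformly in $\om\in A_M^\CC$; the zero mode is handled by the standard Lagrangian trick (the equation carries an automatic solvability condition because $f$ has zero mean, up to a correction absorbed into the quadratic error).

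Next I would set up the iteration. Writing $u^{(0)}=0$ and, inductively, $u^{(n+1)}=u^{(n)}+w^{(n)}$, where $w^{(n)}$ solves the linearized equation with right-hand side the current error $\eps f(\th+u^{(n)}(\th))-\De_\om u^{(n)}$, the crucial points are: (i) composition $u\mapsto f\circ(\id+u)$ must be controlled in the Banach algebra norm of Section~\ref{secbeginpf}, which is precisely why that norm was introduced — it makes $\th\mapsto\ti u_M(q)(\th,\eps)$ into an algebra under pointwise product and gives the needed estimates on $f\circ(\id+u)$ and on its derivative in $\th$; (ii) at each step one shrinks the strip width from $R_n$ to $R_{n+1}$ and the $\eps$-radius from $\rho_n$ to $\rho_{n+1}$, arranging $R_n\downarrow R$ and $\rho_n\downarrow\rho$ with $\rho$ of the stated order $cM^{-8}$ (the exponent $8$ coming from tracking how many powers of $M$ the small-divisor losses and the strip-loss factors contribute across the telescoping geometric sum); (iii) the quadratic (super-exponential) convergence $\|w^{(n)}\|\le C\|w^{(n-1)}\|^2$ must survive the fact that all estimates are now uniform over the non-compact-looking complex set $A_M^\CC$ — but since the bounds depend on $\om$ only through the explicit Diophantine-type inequality, this uniformity is genuine.

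The output of the iteration is, for each fixed admissible $q=E(\om)$ with $\om\in A_M^\CC$, a function $\ti u_M(q)\in\BB_{R,\rho}$; one then checks that this limit is holomorphic in $\eps\in\DD_\rho$ (uniform limit of holomorphic functions) and, for real $\om$ and real $\eps$, has zero mean value in $\th$ and is injective enough for $\id+u$ to parametrize the invariant graph — this last part is just the real KAM statement recalled in Section~\ref{secigs}, recovered because the scheme restricts to the classical one on $A_M^\RR\times(-\rho,\rho)$. The genuinely new step, and the one I expect to be the main obstacle, is proving that $q\mapsto\ti u_M(q)$ is $\gC^1$-holomorphic on $K_M$ in the Whitney sense: one must produce a candidate derivative $\pa_q\ti u_M$ and verify the Whitney remainder estimate uniformly. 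The natural strategy is to differentiate the whole iteration formally in $q$, obtaining a parallel iteration for $v^{(n)}=\pa_q u^{(n)}$ that involves the \emph{same} linearized operator $\De_\om$ plus the $\om$-derivative $\pa_\om\De_\om$ acting on the previous iterate; one shows this second iteration converges (with perhaps one more power of $M$ in $\rho$, which is why the final exponent is generous), that its limit satisfies the Cauchy--Riemann equations in $q$, and that the pair $(\ti u_M,\pa_q\ti u_M)$ satisfies the first-order Whitney condition on $K_M$. Controlling $\pa_\om\De_\om$ — which amounts to bounding $k\cos(\pi k\om)/\sin^3(\pi k\om)$-type factors, i.e. one extra small-divisor loss — uniformly on $A_M^\CC$, and patching the estimates across the accumulation of $A_M^\CC$ onto the real axis near resonances, is where the real care is needed; the rest is the familiar KAM bookkeeping transported verbatim into the $\gC^1$-holomorphic category.
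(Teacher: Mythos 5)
Your overall strategy — Levi--Moser scheme, small-divisor estimates uniform over $A_M^\CC$, a scale of strips $R_n\downarrow R_\infty$ and an $\eps$-radius of size $cM^{-8}$ extracted from bookkeeping powers of $M$ — matches the paper. But your plan for the crucial $\gC^1$-holomorphy in $q$ is different and, as stated, has a gap. You propose to run the scheme for $u^{(n)}$, differentiate it formally to get a parallel iteration for $v^{(n)}=\pa_q u^{(n)}$, show both converge, and \emph{then} verify the Whitney condition for the limiting pair. Since $K_M\cap\SS$ has empty interior, $\gC^1$-holomorphy there is a genuine Whitney condition: one needs $\Om_{u,v}(q,q')=\frac{u(q')-u(q)}{q'-q}-v(q)$ to extend continuously to the diagonal. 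Uniform convergence of $u^{(n)}$ and $v^{(n)}$ separately does \emph{not} give this; one would also have to control the remainders $\Om_{u^{(n)},v^{(n)}}$ uniformly on $K_M\times K_M$ through the iteration, and your sketch says nothing about how. The paper short-circuits the whole issue: the norm $\nor{\ph}{K,B}=n_0(\ph)+n_1(\ph)+n_2(\ph)$ of~\eqref{eqdefnorphKB} includes $n_2(\ph)=\sup_{K\times K}|\Om_{\ph,\ph'}|$ as one of its summands, Lemma~\ref{lemBanachAlg} shows this is a Banach \emph{algebra} norm, and Proposition~\ref{lemelbrick} bounds the elementary small-divisor brick $\la_k(q)=\frac{1}{q^k-1}$ in this norm, so that every operation in the scheme (products, composition via Lemma~\ref{lemcompos}, the cohomological operators $\Ga,\Ga^-$ of Proposition~\ref{proplinlemma}, the division and inversion in Lemma~\ref{A12}) is performed directly inside $\Chol(K_M,\cdot)$; the Cauchy property of $(u_n)$ in this norm then gives $\gC^1$-holomorphy of the limit for free, with no after-the-fact Whitney verification.

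Two further points you did not address. First, for $\IM\om\neq0$ the shifted arguments $\th\pm\om$ leave $S_r$, so a solution of the cohomological equation cannot simply be sought in $\Chol(K_M,\HSrBr)$; the paper introduces the subspaces $\Cp{r},\Cm{r},\Cpm{r}$ of Definition~\ref{defCRpCRm}, which encode holomorphy in a wider strip $S_{-r,r+\IM\om}$ or $S_{-r-\IM\om,r}$ and are exactly what the regularizing operators $\Ga,\Ga^-$ produce. Second, rather than inverting the second-order difference $\De=\na\na^-$ directly as you suggest, the paper factorizes the Levi--Moser linearized operator as $A\De h-h\De A=\na^-\bigl(AA^+\na(h/A)\bigr)$ (Lemma~\ref{factorization}), solves the resulting chain with two applications of $\Ga,\Ga^-$ plus divisions by $AA^+$ and a zero-mode adjustment made possible by $\mean{A\cE(u)}=0$ (Lemma~\ref{zeromean}); each application of $\Ga$ or $\Ga^-$ costs $M^2$, giving $M^4$ for the increment (Lemma~\ref{A12}) and $M^8$ in the quadratic error (Corollary~\ref{cor:rough}), which is the source of $\rho=cM^{-8}$ — not a single extra power of $M$ as you guessed.
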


%%%%%%%%%%%%%%%%%%%%%%%%%%%%%%%%%%%%%%%%%%%

The proof of Theorem~\ref{thmtiuCunhol} will start in Section~\ref{secbeginpf}.

%%%%%%%%%%%%%%%%%%%%%%%%%%%%%%%%%%%%%%%%%%%

Theorem~\ref{thmtiuCunhol} provides a function~$\ti u_M$ on~$K_M$ which is an extension
of the function~$u_M$ that we had on~$E(A_M^\RR)$.
\emph{This extension is unique and, if we consider $M_1>M$, then~$\ti u_{M_1}$ is
an extension of~$\ti u_M$} (if we regard $\BB_{R,\rho}$ as a
subspace of~$\BB_{R,\rho_1}$ for $\rho_1<\rho$);
these facts are simple consequences of a quasianalyticity property, which is
established in~\cite{MS2} for all functions $\gC^1$-holomorphic on~$K_M$ and
which we now recall.

Denote by~$\gH^1$ the one-dimensional Hausdorff outer measure associated with
the spherical metric in~$\PP$.
%
%extended to~$\PP$ by $\gH^1(A) = \gH^1(A-\{\infty\})$ for any $A\subset\PP$.
%
Suppose that $C$ is a subset of~$\PP$ and that $\lL$ is a linear space of functions,
all of which are defined on~$C$. 
We say that~$\lL$ is \emph{$\gH^1$-quasianalytic relatively to~$C$} if any subset of~$C$ of
positive $\gH^1$-measure is a uniqueness set for~$\lL$ (\ie the only function of~$\lL$
vanishing identically on this subset of~$C$ is $\equiv0$);
in other words, a function of~$\lL$ is determined by its restriction to any subset
of~$C$ of positive $\gH^1$-measure.

%%%%%%%%%%%%%%%%%%%%%%%%%%%%%

Observe that the interior~$\ovi{K}_M$ of the above compact set$K_M$ has two
connected components:
\[
\ovi{K}\str{1.55}\IN_M \defeq \ovi{K}_M \cap
\{\, q\in\PP\mid |q|<1 \,\},
\quad
\ovi{K}\str{1.55}\EX_M \defeq \ovi{K}_M \cap
\{\, q\in\PP\mid |q|>1 \,\}.
\]
It is proved in~\cite{MS2} that~$K_M$ has the property
that, for any Banach space~$B$, the Banach space
\[
\gO(K_M,B) = \{\, \ph \colon K_M \to B \;
\text{continuous in~$K_M$ and holomorphic in~$\ovi{K}_M$}
\,\}
\]
is $\gH^1$-quasianalytic relatively to~$K_M$.
Since $\Chol(K_M,B)\subset\gO(K_M,B)$, this space inherits the
$\gH^1$-quasianalyticity property.%
\footnote{
Thus proving $\ti u \in \gO(K_M,\BB_{R,\rho})$ is sufficient to get the
$\gH^1$-quasianalyticity; this can be achieved with $\rho = c M^{-4}$ by
a simple adaptation of the proof of Theorem~\ref{thmtiuCunhol}.
}
This is why $\ti u_M$ is uniquely determined and must coincide with $\ti u_{M_1
| K_M}$ for $M_1>M$: this function is in fact determined by its restriction~$u_M$
to~$E(A_M^\RR)$, and even by its restriction to any subset of~$E(A_M^\RR)$ of
positive Haar measure of the unit circle.

This quasianalyticity property was our main motivation.
What is striking is that, as we shall see in Section~\ref{seccomm},
finding the complexified function~$\ti u_M$ in restriction to 
$\{\,|\IM\om| \ge h\,\}$,
\ie for $|q|\le \ee^{-2\pi h}$ or $|q|\ge \ee^{2\pi h}$,
is relatively easy because this can be done by solving an equation which does
not involve any small divisor; still, 
the quasianalyticity property shows that
all the real KAM curves determined by~$u_M$
can be obtained from this easy-to-find function by a kind of ``generalized
analytic continuation''
(the restriction of this function to a positive $\gH^1$-measure subset of~$K_M$
is even sufficient).

%%%%%%%%%%%%%%%%%%%%%%%%%%%%%%%%%%%%%%%%%%%

On the other hand, resonances produce an obstruction to the analytic
continuation in the Weierstra\ss{} sense through any point of the unit circle~$\SS$,
no matter how small $\rho$ is taken:

\begin{theorem}	\label{thmrat}
Suppose $\tau>0$, $0<R<R_0$, $f$ real analytic and holomorphic in a
neighbourhood of~${\ov S}_{R_0}$, with zero mean value but not identically zero.
Let $M>2\ze(\tau+1)$ and $\ti u_M \in \Chol(K_M, \BB_{R,\rho})$ as in
Theorem~\ref{thmtiuCunhol}, possibly with a smaller $\rho>0$.
Consider the restriction of~$\ti u_M$ to~$\ovi{K}\str{1.55}\IN_M$
or to~$\ovi{K}\str{1.55}\EX_M$, 
which is a
$\BB_{R,\rho}$-valued holomorphic function on an open subset of~$\PP-\SS$.
Then, given a point~$q_*$ of the unit circle~$\SS$, this holomorphic function
has no analytic continuation in any neighbourhood of~$q_*$.
%
% In fact, there exists $\rho_*\in(0,\rho)$ such that, for any
% $\eps\in(-\rho_*,\rho_*)$, the function
% %
% $(q,\th) \mapsto \ti u_M(q)(\th,\eps)$, which belongs to
% $\Chol\big(K_M,H^\infty(S_R)\big)$, has no analytic continuation
% in any neighbourhood of~$q_*$.
%
\end{theorem}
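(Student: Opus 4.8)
The plan is to argue by contradiction: suppose the holomorphic function (say the restriction of $\ti u_M$ to $\ovi{K}\str{1.55}\IN_M$) did extend holomorphically to a neighbourhood of some $q_*\in\SS$. Since the interior $\ovi{K}\str{1.55}\IN_M$ accumulates on $\SS$ along many radial approaches (because $A_M^\RR$ has positive measure, hence many density points), such an extension would give, near the real frequency $\om_*$ with $\ee^{2\pi\I\om_*}=q_*$, a genuine holomorphic (in the Weierstra\ss{} sense) solution $u(\th,\eps,\om)$ of the conjugacy equation~\eqref{eqseconddiffuom} for $(\th,\eps)$ in a fixed polydisc and $\om$ in a complex neighbourhood of $\om_*$. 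The idea is then to evaluate this putative extension along rational frequencies $\om = n/m$ accumulating on~$\om_*$ and derive a contradiction, because at a rational frequency the difference equation~\eqref{eqseconddiffuom} has no admissible solution in general.

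Concretely: first I would note that, for $\om = n/m$ with $\gcd(n,m)=1$, the linear operator $u\mapsto u(\th+\om)-2u(\th)+u(\th-\om)$ kills every function of period $1/m$; hence the $m$-th Fourier-type resonance imposes a solvability condition on the right-hand side $\eps f(\th+u(\th))$. More precisely, summing equation~\eqref{eqseconddiffuom} over the orbit $\th,\th+\tfrac1m,\dots,\th+\tfrac{m-1}{m}$ of the rotation by $n/m$ makes the left-hand side telescope to zero, so any continuous $1$-periodic solution must satisfy
\[
\sum_{j=0}^{m-1} f\!\left(\th + \tfrac{j}{m} + u\!\left(\th+\tfrac{j}{m}\right)\right) = 0 \qquad \text{for all }\th.
\]
For a fixed $q_*$ and the extended holomorphic family, I would regard the left-hand side as a holomorphic function of $(\th,\eps,\om)$; at $\om=n/m$ it must vanish identically in $\th$ (and in $\eps$). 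Expanding in powers of $\eps$ and using that $u$ is $O(\eps)$, the first-order term in $\eps$ is $\sum_{j=0}^{m-1} f(\th+\tfrac{j}{m})$, which as a function of $\th$ picks out exactly the Fourier modes of $f$ that are multiples of $m$; for $m$ large this vanishes (since $f$ is bounded with a fixed holomorphy strip, its high Fourier modes decay but for $m$ larger than the ``spectral support'' in a suitable sense this term is $0$), so one must go to higher order in $\eps$. The cleanest route is: since $f\not\equiv0$, fix the smallest $k_0\ge1$ with $\widehat f(k_0)\neq0$; then for any $m>k_0$ examine the Fourier mode $k_0$ (or more carefully, the lowest surviving mode) of the order-$\eps^N$ contribution to the orbit sum, and show it cannot vanish for all small $\eps$ — this is a concrete but finite computation producing a nonzero obstruction depending only on $f$ and not on $\rho$.

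The main obstacle, and where I would spend the most care, is organizing this obstruction uniformly so that a single nonvanishing quantity survives as $m\to\infty$ along a sequence of rationals $n/m\to\om_*$: one needs that the analytic extension, being holomorphic on a fixed polydisc in $(\th,\eps)$ and a fixed neighbourhood in $\om$, forces the orbit-sum identity to hold at infinitely many $\om=n/m$ accumulating at $\om_*$, hence (by analyticity in $\om$) identically in $\om$ near $\om_*$ — and then one derives a contradiction by showing the orbit-sum function is \emph{not} identically zero for irrational $\om$ near $\om_*$, where in fact the genuine KAM solution exists and makes $\Phi$ conjugate to a rotation, so that $\sum_j f(\th+\tfrac jm+u)$ cannot be forced to vanish on a $q$-set that is dense modulo taking $m$-fold sums. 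Here one must be careful that the extension near $q_*$ agrees with $\ti u_M$ on $\ovi{K}\str{1.55}\IN_M$ and that, by the uniqueness/quasianalyticity already established, it is really the KAM solution; the resonant identity at rationals then contradicts the analytic dependence unless $f\equiv0$. Filling in the bookkeeping of which Fourier mode survives at which order in $\eps$, and checking the nonvanishing of the resulting ``resonant normal-form coefficient'' (a polynomial in the Fourier coefficients of $f$), is the technical heart of the argument; everything else is soft analyticity and the telescoping identity above.
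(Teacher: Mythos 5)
Your high-level strategy matches the paper's: argue by contradiction, observe that the putative analytic continuation evaluated at a rational $\om=p/m$ near~$\om_*$ yields a (formal-in-$\eps$, analytic-in-$\th$) solution of the rational difference equation, and exhibit a resonant obstruction making such a solution impossible. Your telescoping orbit-sum identity is precisely the statement that $\ker(\De_*)$ is the space $V_0$ of $\tfrac1m$-periodic functions and $\range(\De_*)=V_0^\perp$, which is the paper's starting point in Proposition~\ref{proprat}. The passage from the positive-measure Cantor set $A_{M_1}^\RR$ (where the continuation must agree with the KAM solution by uniqueness) to all of $D\cap\RR$, including rationals, via non-discreteness and analyticity in~$\om$ is also essentially what the paper does.

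The gap lies in what you defer as the ``technical heart'', and the hints you give there point the wrong way. The orbit sum $\sum_{j=0}^{m-1} f\bigl(\th+\tfrac{j}{m}+u(\th+\tfrac{j}{m})\bigr)$ is $\tfrac1m$-periodic, so it carries only Fourier modes in $m\ZZ$; examining ``the Fourier mode $k_0$'' for $0<k_0<m$ produces nothing, since that mode is automatically annihilated by the averaging. Moreover, when $f$ is \emph{not} a trigonometric polynomial there is no higher-order work to do at all: one simply chooses $m\ge m_*$ with $\hat f_m\neq0$ (possible since $f$ is real analytic and not a trigonometric polynomial), and already the \emph{first}-order term $\sum_j f(\th+\tfrac jm)=m\,\Pi_0 f\neq0$ gives the contradiction; the ``spectral support'' remark is a red herring. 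The delicate case is $f$ a nonzero trigonometric polynomial of top degree~$K$. There the paper first uses a Lagrange-inversion reduction (Lemma~\ref{lemreduc}) to make all coefficients $u_n$ lie in $V_0^\perp$, so that the $V_0$-part of $u$ does not contaminate the bookkeeping, and then tracks the \emph{top} Fourier mode $nK$ of $u_n$ and of the coefficients $g_n$ of $\eps f\circ(\id+u)$: one finds $\cF_{nK}(g_n)=(-2\pi\I K)^{n-1}A^n\be_n$ with $A=\hat f_K\neq0$ and $\be_n>0$ for $n\le n_*$, where $n_*\le m$ is the least index with $n_*K\in m\ZZ$, forcing $\Pi_0 g_{n_*}\neq0$ and hence $g_{n_*}\notin V_0^\perp$. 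Tracking the top mode is what makes the resonant coefficient a pure power of $A$ without cancellations; tracking the lowest mode, as you suggest, would require controlling a large sum of contributions with no obvious sign. Finally, your closing claim---deriving a contradiction by showing the orbit sum is nonzero for \emph{irrational} $\om$ near $\om_*$---misplaces the contradiction: for irrational $\om$ the orbit-sum identity is not a consequence of the difference equation at all, and the contradiction lives entirely at the rational frequencies.
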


The proof of Theorem~\ref{thmrat} is given in Section~\ref{secrat}.

%%%%%%%%%%%%%%%%%%%%%%%%%%%%%%%%%%%%%%%%%%%

\begin{rem}
For fixed~$M$, one can also fix $\eps \in \DD_\rho$ and consider 
$(q,\th) \mapsto \ti u_M(q)(\th,\eps)$ as an element of
$\Chol\big(K_M,H^\infty(S_R)\big)$, \ie a function of~$\th$ which depends
$\gC^1$-holomorphically on~$q$;
the space $\Chol\big(K_M,H^\infty(S_R)\big)$ enjoys the aforementioned
quasianalyticity property.
\end{rem}

%%%%%%%%%%%%%%%%%%%%%%%%%%%%%%%%%%%%%%%%%%%
%%%%%%%%%%%%%%%%%%%%%%%%%%%%%%%%%%%%%%%%%%%
\subsection{Monogenic extension}
%%%%%%%%%%%%%%%%%%%%%%%%%%%%%%%%%%%%%%%%%%%
%%%%%%%%%%%%%%%%%%%%%%%%%%%%%%%%%%%%%%%%%%%

Gluing together the $\ti u_M$'s given by Theorem~\ref{thmtiuCunhol}, we get a
\emph{monogenic function in the sense of Borel}.
Here is the precise definition taken from~\cite{MS2} (which is an adaptation of
the definition given in~\cite{He}):
suppose that $(\gK_j)_{j\in\NN}$ is a monotonic non-decreasing sequence of compact
subsets of $\PP$ and $(B_j)_{j\in\NN}$ is a monotonic non-decreasing sequence of
Banach spaces with continuous injections $B_j \hookrightarrow B_{j+1}$;
the corresponding space of monogenic functions is the Fr\'echet space obtained
as the projective limit of Banach spaces
\begin{multline*}
\fM\bigl( (\gK_j), (B_j) \bigr) = \varprojlim \cA_J, \\[1ex]
\cA_J =  \bigcap_{0\le j\le J}\Chol(\gK_j,B_j), \qquad
\norm{\ph}_{\cA_J} = \max_{0\le j\le J} \norm{\ph_{|\gK_j}}_{\Chol(\gK_j,B_j)}.
\end{multline*}
As a set, $\fM\bigl( (\gK_j), (B_j)
\bigr)$ thus consists of all the functions~$\ph$ which are defined in
$\bigcup_{j\in\NN} \gK_j$
and such that, for every $j\in\NN$, the restriction $\ph_{|\gK_j}$ belongs to
$\Chol(\gK_j,B_j)$ .

Let us apply this construction,
under the hypotheses of Theorem~\ref{thmtiuCunhol},
with any increasing sequence of positive numbers $(M_j)_{j\in\NN}$ tending
to~$+\infty$ 
(we suppose $M_j > 2\ze(1+\tau)$ for all~$j$)
and with $B_j = \BB_{R,\rho_j}$, $\rho_j = c M_j^{-8}$.
We use the notation
\[
\gM = \fM\bigl( (K_{M_j}), (\BB_{R,\rho_j}) \bigr), \quad
\gF_\tau  = \bigcup_{j\in\NN} K_{M_j}, \quad
\DC_\tau = \bigcup_{M>0} A_M^\RR
\] 
($\DC_\tau$ is the full-measure set of all
real frequencies Diophantine with exponent $2+\tau$).
Observe that 
\[
\gF_\tau = \{\, q\in\PP\mid |q|<1 \,\}
\cup E(\DC_\tau) \cup
\{\, q\in\PP\mid |q|>1 \,\},
\]
with $\gF_\tau \cap \SS = E(\DC_\tau)$ a subset of full Haar measure of the unit
circle~$\SS$,
and that the elements of~$\gM$ may be viewed as functions from~$\gF_\tau$ to the
space $H^\infty(S_R)\{\eps\}$ (holomorphic germs in~$\eps$ with values in
$H^\infty(S_R)$).
Theorem~\ref{thmtiuCunhol} immediately yields

%%%%%%%%%%%%%%%%%%%%%%%%%%%%%%%%%%%%%%%%%%%

\begin{corollary}
There is a function $\ti u \in \gM$ such that, for each $\om\in\DC_\tau$ and for
each real $\eps$ close enough to~$0$, the function
$\th\in\TT \mapsto \ti u(\ee^{2\pi\I\om})(\th,\eps)$ has zero mean value and
parametrizes through~\eqref{eqgaintermsofuv}--\eqref{eqvintermsofuom} an \ig\ of
frequency~$\om$ for~$T_\eps$. % defined by~\eqref{eqdefstdfam}.
\end{corollary}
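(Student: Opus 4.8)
The plan is to \emph{glue} the functions $\ti u_{M_j}$ supplied by Theorem~\ref{thmtiuCunhol} along the sequence $(M_j)$. Fix an increasing sequence $(M_j)_{j\in\NN}$ with $M_j>2\ze(1+\tau)$ and $M_j\to+\infty$, and put $\rho_j = c M_j^{-8}$. Theorem~\ref{thmtiuCunhol} then yields $\ti u_{M_j}\in\Chol(K_{M_j},\BB_{R,\rho_j})$ for every~$j$. For $j\le k$ we have $M_j\le M_k$, hence $A_{M_j}^\RR\subset A_{M_k}^\RR$, $A_{M_j}^\CC\subset A_{M_k}^\CC$ and $K_{M_j}\subset K_{M_k}$, while $\rho_k\le\rho_j$ so that $\DD_{\rho_k}\subset\DD_{\rho_j}$ and restriction in the $\eps$ variable is a norm-decreasing injection $\BB_{R,\rho_j}\hookrightarrow\BB_{R,\rho_k}$; it therefore makes sense to compare $\ti u_{M_k}|_{K_{M_j}}$ with $\ti u_{M_j}$, both regarded as $\BB_{R,\rho_k}$-valued functions on~$K_{M_j}$.

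The key step is to check $\ti u_{M_k}|_{K_{M_j}} = \ti u_{M_j}$. First I would fix $\om\in A_{M_j}^\RR$ and a real $\eps\in(-\rho_k,\rho_k)$: then $\th\mapsto\ti u_{M_j}(\ee^{2\pi\I\om})(\th,\eps)$ and $\th\mapsto\ti u_{M_k}(\ee^{2\pi\I\om})(\th,\eps)$ are both $1$-periodic zero-mean continuous solutions of~\eqref{eqseconddiffuom} with $\id+u$ injective, so by the uniqueness of the \ig\ of frequency~$\om$ recalled in Section~\ref{secigs} they coincide on~$\TT$; applying the one-variable identity theorem successively in~$\th$ and in~$\eps$, they then coincide on all of $S_R\times\DD_{\rho_k}$. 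Thus $\ti u_{M_j}$ and $\ti u_{M_k}|_{K_{M_j}}$ agree on $E(A_{M_j}^\RR)$, a subset of the unit circle of Haar measure $\ge 1-\frac{M(\tau)}{M_j}>0$, hence of positive $\gH^1$-measure in~$\PP$. Since the difference $\ti u_{M_j}-\ti u_{M_k}|_{K_{M_j}}$ lies in $\Chol(K_{M_j},\BB_{R,\rho_k})\subset\gO(K_{M_j},\BB_{R,\rho_k})$, and this space is $\gH^1$-quasianalytic relatively to~$K_{M_j}$ by the result of~\cite{MS2} recalled above, the difference vanishes identically on~$K_{M_j}$, which gives the claim.

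Then the family $(\ti u_{M_j})_{j\in\NN}$ is coherent and defines a single function $\ti u$ on $\gF_\tau=\bigcup_{j}K_{M_j}$ with $\ti u|_{K_{M_j}}=\ti u_{M_j}\in\Chol(K_{M_j},\BB_{R,\rho_j})$ for every~$j$; by the definition of the Fr\'echet space this exactly says $\ti u\in\gM=\fM\bigl((K_{M_j}),(\BB_{R,\rho_j})\bigr)$. Finally, given $\om\in\DC_\tau$, there is $M>0$ with $\om\in A_M^\RR$, and since $A_M^\RR\subset A_{M'}^\RR$ for $M'\ge M$ and $M_j\to+\infty$, I may pick $j$ with $\om\in A_{M_j}^\RR$; for this $j$ and any real $\eps$ with $|\eps|<\rho_j$, Theorem~\ref{thmtiuCunhol} tells us that $\th\mapsto\ti u(\ee^{2\pi\I\om})(\th,\eps)=\ti u_{M_j}(\ee^{2\pi\I\om})(\th,\eps)$ has zero mean value and parametrizes an \ig\ of frequency~$\om$ for~$T_\eps$ through~\eqref{eqgaintermsofuv}--\eqref{eqvintermsofuom}.

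Once Theorem~\ref{thmtiuCunhol} is in hand, no genuine difficulty remains: the content of the corollary is entirely the coherence of the $\ti u_{M_j}$, and the one point deserving care is the passage from pointwise coincidence of the parametrizations (for real~$\eps$) on the overlap $E(A_{M_j}^\RR)$ to an identity of $\gC^1$-holomorphic $\BB_{R,\rho_k}$-valued functions on all of $K_{M_j}$, which rests on the fact that $E(A_{M_j}^\RR)$ has positive $\gH^1$-measure together with the $\gH^1$-quasianalyticity of $\gO(K_{M_j},\cdot)$ established in~\cite{MS2}.
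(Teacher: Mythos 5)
Your proof is correct and follows essentially the same route as the paper: the paper states the corollary as an immediate consequence of Theorem~\ref{thmtiuCunhol} together with the coherence $\ti u_{M_1}|_{K_M}=\ti u_M$ (for $M_1>M$) established just before via the $\gH^1$-quasianalyticity of $\gO(K_M,B)$ from~\cite{MS2}. You simply spell out that coherence argument in full (agreement on the positive-measure set $E(A_{M_j}^\RR)$ by uniqueness of the invariant graph, identity theorem in $(\th,\eps)$, then quasianalyticity on $K_{M_j}$), which is exactly what the paper compresses into the sentence preceding the corollary.
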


%%%%%%%%%%%%%%%%%%%%%%%%%%%%%%%%%%%%%%%%%%%

As explained in~\cite{MS2}, the space~$\gM$ is $\gH^1$-quasianalytic relatively
to~$\gF_\tau$, so we can say that~$\ti u$ is determined by its restriction to any
positive $\gH^1$-measure subset of~$\gF_\tau$.
In particular, we emphasize that the function $\ti u_{|E(\DC_\tau)}$ which encodes
the real KAM curves is determined by the restriction of~$\ti u$ to any such
subset and we repeat that finding this restriction is easy when the subset is
contained in $\{\,|q|\le \ee^{-2\pi h}\,\}$ or $\{\,|q|\ge \ee^{2\pi h}\,\}$.

\emph{Thus, we have a single analytic object, $\ti u$, which determines all the real KAM
curves, as if there were only one KAM curve instead of separate \igs, each of
which with its own Diophantine frequency $\om\in\DC_\tau$.}

% There is another type of quasianalyticity property which is proved
% in~\cite{MS2}. 
% %
% We state it for our space of monogenic functions, but it can
% also be formulated for the spaces $\Chol(K_M,B)$ and $\gO(K_M,B)$.

The interior~$\ovi{\gF_\tau}$ of~$\gF_\tau$ has two connected components,
$\{\, q\in\PP\mid |q|<1 \,\}$ and $\{\, q\in\PP\mid |q|>1 \,\}$,
and each function of~$\gM$ is holomorphic in~$\ovi{\gF_\tau}$.
The $\gH^1$-quasianalyticity thus implies a form of coherence:
if two functions of this space coincide on one of the connected components
of~$\ovi{\gF_\tau}$, then they coincide on the whole of~$\gF_\tau$.
Given a function like~$\ti u$, we may think of 
the outside function $\ti u_{|\{\,|q|>1 \,\}}$ 
as of a ``generalized analytic continuation'' of 
the inside function $\ti u_{|\{\,|q|<1 \,\}}$ (see \cite{RS}),
although, according to Theorem~\ref{thmrat}, 
classical analytic continuation in the sense of Weierstra\ss{} is
impossible across the unit circle~$\SS$.
%
% as already mentioned in Remark~\ref{remFrNat}
%
These two holomorphic functions give rise to a boundary value function $\ti
u_{|E(\DC_\tau)}$ which encodes the real KAM curves.

%%%%%%%%%%%%%%%%%%%%%%%%%%%%%%%%%%%%%%%%%%%

\begin{rem}
Instead of keeping~$\tau$ fixed as we did in the previous discussion, one can
also let it vary so as to reach the set of all Diophantine real numbers
\[
\DC \defeq \bigcup_{\tau>0} \DC_\tau.
\] 
One just needs to take any sequences $\tau_j \uparrow +\infty$ and $M_j \uparrow
+\infty$.
Observing that
\[
\DC = \bigcup_{j\in\NN} A_{\tau_j,M_j}^\RR
\] 
(with $A_{\tau,M}^\RR$ defined by~\eqref{ineqomDiophMtau} but we now make
explicit the dependence on~$\tau$)
and using correspondingly the space of monogenic functions associated with
$K_{\tau_j,M_j}$ (instead of~$K_{\tau,M_j}$ with fixed~$\tau$) and
$\BB_{R,\rho_j}$ with $\rho_j \defeq c(\tau_j) M_j^{-8}$ (with $c(\tau)$ as in
Theorem~\ref{thmtiuCunhol}), we get a monogenic~$\ti u$ defined on
$\gF \defeq \bigcup K_{\tau_j,M_j}= \{\, q\in\PP\mid |q|<1 \,\}
\cup E(\DC) \cup
\{\, q\in\PP\mid |q|>1 \,\}.$
\end{rem}

%%%%%%%%%%%%%%%%%%%%%%%%%%%%%%%%%%%%%%%%%%%

\begin{rem}
Stronger than $\gH^1$-quasianalyticity is the following more classical property:
if $\lL$ is a linear space of functions defined in $C\subset\PP$, all of which
admit an asymptotic expansion at a point $q_0\in C$, we say that $\lL$ is
quasianalytic at~$q_0$ in the sense of Hadamard if the only function of~$\lL$ with
zero asymptotic expansion at~$q_0$ is $\equiv0$, \ie a non-tricial function
of~$\lL$ cannot be flat at~$q_0$ (see \cite{MS2}).
A convergent Taylor series is a particular case of asymptotic expansion, thus if
the functions of~$\lL$ are analytic at~$q_0$ the question of the quasianalyticity
at~$q_0$ makes sense; the question is not trivial when the functions are
holomorphic in the interior of~$C$ but this interior is not connected.

This is what happens with our spaces $\gO(K_M,B)$ and~$\gM$:
given any $q_0\in\PP$ with $|q_0|\neq1$, the function~$\ti u$ which
encodes all the KAM curves has a convergent Taylor series at~$q_0$ and this
Taylor series determines~$\ti u$ everywhere.
In the case of $q_0=0$, this Taylor series is particularly easy to compute
inductively (see formula~\eqref{eqinduc0} in Section~\ref{seccomm}).%
\footnote{
Observe that this Taylor series $\sum_{n\ge1} q^n u_n$ also determines the
function~$f$ defining the dynamical system~$T_\eps$ too
(the $k$th Fourier coefficient of~$\eps f$ coincides with the $k$th Fourier
coefficient of $u_{|k|}$), 
which may be considered as a kind of inverse scattering.
}
See the end of Section~\ref{seccomm} for an open question about the Hadamard
quasianalyticity.
\end{rem}

%%%%%%%%%%%%%%%%%%%%%%%%%%%%%%%%%%%%%%%%%%%
%%%%%%%%%%%%%%%%%%%%%%%%%%%%%%%%%%%%%%%%%%%
\subsection{Comments}	\label{seccomm}
%%%%%%%%%%%%%%%%%%%%%%%%%%%%%%%%%%%%%%%%%%%
%%%%%%%%%%%%%%%%%%%%%%%%%%%%%%%%%%%%%%%%%%%

\subsubsection*{\Ics\ with complex frequencies for the complexified map}

The extension~$\ti u$ will be obtained as a solution of~\eqref{eqseconddiffuom},
viewed as a complexified difference equation involving the holomorphic extension of~$f$ to
the strip~$S_{R_0}$ and a complex frequency $\om\in A_M^\CC$.
This corresponds to determining a complex invariant curve for the holomorphic
map $\ti T_\eps \col S_{R_0} \times \CC \to \CC/\ZZ\times\CC$ defined
by~\eqref{eqdefstdfam}. 
(In fact, the hypothesis that~$f$ be \emph{real} analytic is not necessary: $f$
holomorphic in~$S_{R_0}$ is sufficient.)

This might be surprising at first sight:
if $R<R_0<\infty$, $\th\in S_R$ and $|\IM\om|$ is too large, we cannot prevent
$\th \pm \om$ from lying out of the strip~$S_R$ where the solution~$\ti u$ is
supposed to be defined;
what is then the meaning of the \lhs\ of~\eqref{eqseconddiffuom}?
The explanation is that, in fact,~$\ti u$ will be defined in the larger strip
$S_{R+|\IM\om|}$.

This can be viewed as an effect of the regularizing effect of the operator~$E_q$
% the right inverse to the second-order difference operator~$\De_q$ 
which is defined via Fourier series by the formulas
\[
E_q \col \ph = \sum_{k\in\ZZ} \hat\ph_k e_k 
\mapsto \psi = \sum_{k\in\ZZ^*} \hat\psi_k e_k, \qquad
\hat\psi_k \defeq
\frac{1}{\ee^{2\pi\I k\om}-2+\ee^{-2\pi\I k\om}} \hat\ph_k
= \frac{1}{q^k-2+q^{-k}} \hat\ph_k
\]
(with the notation of Section~\ref{secFourier})
and which has the property 
$\psi(\th+\om)-2\psi(\th)+\psi(\th-\om) = \ph(\th)-\mean\ph$.
Indeed, when $|\IM\om|\ge h>0$, %\ie $|q|\le \ee^{-2\pi h}$ or $|q|\ge \ee^{2\pi h}$, 
one can check that $\ph$ holomorphic in $S_R$ implies $\psi=E_q\ph$ holomorphic in
$S_{R+h}$;
on the other hand, equation~\eqref{eqseconddiffuom} with the requirement $\mean
u=0$ is equivalent to
\begin{equation}	\label{eqfixedpt}
u = \eps E_q\big( f \circ (\id+u) \big)
\end{equation}
and the vanishing of $\be = \eps \mean{f \circ (\id+u)}=0$.
For $|q|\neq1$, it is easy to find a solution $u(q)$ of equation~\eqref{eqfixedpt} by means of a
fixed point method and to check that it depends holomorphically on~$q$; the
difference~$\cE(u)$ between the \rhs\ and the \lhs\ of~\eqref{eqseconddiffuom} is then
the constant $\be$ and it is $\equiv0$ because of Lemma~\ref{zeromean}
($\cE(u)=\be$ constant implies $\be = \mean{(1+\pth u)\cE(u)}$).

The Taylor series at $q=0$ of the solution is obtained as follows:
the operator~$E_q$ can be expanded as 
\[
E_q = \sum_{n\ge1} q^n E\suppar n, \qquad
E\suppar n \col \sum_{k\in\ZZ} \hat\ph_k e_k \mapsto
\sum d \left( \hat\ph_m e_m + \hat\ph_{-m} e_{-m} \right),
\]
where the last summation is over all factorizations $n=md$, with integers
$m,d\ge1$;
then we have $u = \sum_{n\ge 1} q^n u_n$ with $u_n\in \BB_{R,\rho_M}$,
convergent for $|q| < \ee^{-2\pi/M}$, with
$u_1 = \eps E\suppar 1 f$ and
\begin{equation}	\label{eqinduc0}
u_n = \eps E\suppar n f + \eps \sum E\suppar{n_0} \big( 
\frac{1}{r!}f\suppar r u_{n_1}\cdots u_{n_r} \big),
\end{equation}
where the last summation is over all decompositions $n = n_0+\cdots+n_r$ with
integers $r,n_0,\ldots,n_r\ge1$.
Observe that each~$u_n$ is a polynomial in~$\eps$ and a trigonometric polynomial
in~$\th$, with Fourier coefficients $\cF_k(u_n)=0$ for $|k|>n$ and 
$\cF_{\pm n}(u_n) = \eps \cF_{\pm n}(f)$.

%%%%%%%%%%%%%%%%%%%%%%%%%%%%%%%%%%%%%%%%%%%
%%%%%%%%%%%%%%%%%%%%%%%%%%%%%%%%%%%%%%%%%%%

\subsubsection*{Complex versus real Whitney differentiability}
\label{seccomplexversusreal}

For a closed subset~$A$ of~$\RR^n$, a Banach space~$B$ and a function $\ph\col
A\to B$, 
the definition of being~$\gC^1$ or~$\gC^\infty$ in the sense of Whitney is
intrinsic: it involves only the set~$A$ on which~$\ph$ is defined (and the partial
derivatives of~$\ph$ are uniquely determined as soon as~$A$ is perfect).

Whitney's extension theorem yields an alternative definition
(see \cite{Whitney}, \cite{Stein}):
$\ph$ is Whitney $\gC^1$ on~$A$ if and only if there exists
a function~$\ti\ph$ which is $\gC^1$ on~$\RR^n$ such that $\ti\ph_{|A}=\ph$.
Of course, the extension~$\ti\ph$ is in general not unique.

Similarly, if~$K$ is a perfect subset of $\CC\simeq\RR^2$ (or~$\PP$),
there is an intrinsic definition of being $\gC^1$-holomorphic on~$K$
(see Section~\ref{secCunhol})
and Whitney's extension theorem%
\footnote{
See \cite{ALG}, Remark~III.4 and Proposition~III.8: our $\gC^1$-holomorphic
functions correspond to their W-Taylorian $1$-fields; see also \cite{Glaes},
pp.~65--66.
}
implies that
$\ph$ is $\gC^1$-holomorphic on~$K$ if and only if there exists
a function~$\ti\ph$ which is $\gC^1$ in the real sense on~$\CC$ (or~$\PP$) such
that $\ti\ph_{|K}=\ph$ and $\bar\pa\ph_{|K}=0$.

The classical results of P\"oschel \cite{Poschel} and Lazutkin \cite{Laz} are
concerned with \emph{real} Whitney regularity.%
\footnote{
Moreover they are dealing with the difficult case of finitely differentiable
data rather than analytic ones.
However, \cite{Poschel} (\S~5a, pp. 690--691), prompted by a remark of Arnold,
alludes to a complexified version of his result in the analytic case.
}
Whitney's extension theorem is then useful for instance to estimate the measure
of the union of the KAM tori, but the extension it yields is somewhat arbitrary
and has no direct dynamical meaning, whereas in this article we are interested
in the uniqueness properties that one gets when $\gC^1$-holomorphy is imposed on
a sufficiently large set~$K_M$ or~$\gF_\tau$.
%
% Besides, extending our function~$\ti u$ from~$\gF_\tau$ to the whole of~$\PP$
% would probably not lead to new results \wrt\ the aforementioned works.

%%%%%%%%%%%%%%%%%%%%%%%%%%%%%%%%%%%%%%%%%%%
%%%%%%%%%%%%%%%%%%%%%%%%%%%%%%%%%%%%%%%%%%%

%\vfill\eject

\subsubsection*{Possible generalizations and open questions}

The Siegel-type Diophantine condition~\eqref{ineqomDiophMtau} that we use is not
the optimal one for real frequencies: it is shown in \cite{BG} that a Bruno-type
condition is sufficient for the existence of an \ig\ for $|\eps|$ small enough.
Maybe one could work with a set of complex frequencies larger than~$A_M^\CC$,
built from the set of Bruno numbers instead of~$A_M^\RR$, as the one which is
used in \cite{Ri} to prove $\gC^\infty$-holomorphy results in the context of
circle maps.
(Besides, in the case of the Siegel-type condition, we claim no optimality for
the radius $\rho = c M^{-8}$.)

Even sticking to the Siegel Diophantine condition, another issue to be
considered is $\gC^\infty$-holomorphy,
\ie the existence of infinitely many complex derivatives in the sense of Whitney.
One can indeed expect that our function~$\ti u$ belong to a space of functions
$\gC^\infty$-holomorphic on appropriate compact subsets of~$\PP$ containing
$E(\DC_\tau)$; if so, one could then raise the following question, which is
analogous to the open question formulated by Herman at the end of his article on
circle maps \cite{He}:
Given a point~$q_*$ of~$E(DC_\tau)$, is~$\ti u$, or any function of this space of
$\gC^\infty$-holomorphic functions, determined by the sequence of the values of
its derivatives at~$q_*$? In other words, is this space of functions quasianalytic in
the sense of Hadamard at~$q_*$?

Finally, one should consider the adaptation of this circle of ideas to
perturbations of higher-dimensional twist maps or to near-integrable Hamiltonian
systems.

% %
% (beware the change in the non-resonance condition and in the Diophantine
% condition
% %
% ($|k\cdot\om|$ instead of $\norZ{k\cdot\om}$)
% %
% when passing from discrete to continuous)

%%%%%%%%%%%%%%%%%%%%%%%%%%%%%%%%%%%%%%%%%%%

%%%%%%%%%%%%%%%%%%%%%%%%%%%%%%%%%%%%%%%%%%%
%%%%%%%%%%%%%%%%%%%%%%%%%%%%%%%%%%%%%%%%%%%
%%%%%%%%%%%%%%%%%%%%%%%%%%%%%%%%%%%%%%%%%%%
%%%%%%%%%%%%%%%%%%%%%%%%%%%%%%%%%%%%%%%%%%%

\section{Functional spaces}
\label{secbeginpf}

%%%%%%%%%%%%%%%%%%%%%%%%%%%%%%%%%%%%%%%%%%%
%%%%%%%%%%%%%%%%%%%%%%%%%%%%%%%%%%%%%%%%%%%

The adaptation of Levi-Moser's method \cite{LM} which we employ in
Section~\ref{secLMscheme} to reach the $\gC^1$-holomorphy in the frequency
requires the definition of appropriate functional spaces.
The point is to deal as much as possible with Banach algebras (even if this
requires defining norms which differ from the usual ones), so as to ease all the
processes of nonlinear analysis (in particular functional composition).
We thus begin by a new definition.

%%%%%%%%%%%%%%%%%%%%%%%%%%%%%%%%%%%%%%%%%%%
%%%%%%%%%%%%%%%%%%%%%%%%%%%%%%%%%%%%%%%%%%%

\subsection{An algebra norm on the Banach space of $\gC^1$-holomorphic functions}
\label{secCunhol}

%%%%%%%%%%%%%%%%%%%%%%%%%%%%%%%%%%%%%%%%%%%
%%%%%%%%%%%%%%%%%%%%%%%%%%%%%%%%%%%%%%%%%%%

Let~$B$ be a complex Banach space.

%%%%%%%%%%%%%%%%%%%%%%%%%%%%%%%%%%%%%%%%%%%
\subsubsection*{Case of a perfect subset of~$\CC$}
%%%%%%%%%%%%%%%%%%%%%%%%%%%%%%%%%%%%%%%%%%%

Let $K$ be a perfect subset of~$\CC$.
For any two functions $\ph,\psi \col K \to B$, we set % define 
% $\Om_{\ph,\psi} \col K\times K \to B$ by
%
\begin{equation}	\label{eqdefOmphpsi}
\Om_{\ph,\psi} \col K\times K \to B, \qquad
\Om_{\ph,\psi}(q,q') = 
\begin{dcases}
\hfill 0 & \text{if $q=q'$,} \\
\frac{\ph(q')-\ph(q)}{q'-q} - \psi(q) & \text{if $q\neq q'$.}
\end{dcases}
\end{equation}
The usual definition of $\gC^1$-holomorphy can be rephrased as follows:
\begin{quote}
\emph{A function $\ph \col K\to B$ is $\gC^1$-holomorphic iff it is continuous and there exists a
continuous $\psi \col K\to B$ such that $\Om_{\ph,\psi}$ is continuous
on~$K\times K$.}
\end{quote}
Since $K$ has no isolated point, the function~$\psi$ is then unique; we usually
denote it by~$\ph'$ or~$\ph\suppar1$.
Observe that at any interior point of~$K$ the function~$\ph$ is holomorphic
and~$\ph'$ gives the ordinary complex derivative.
Notice also that, if $K$ is contained in an open set~$U$, then the restriction
to~$K$ of any holomorphic function $U\to B$ is $\gC^1$-holomorphic.

We shall impose furthermore that $\ph$, $\ph'$ and~$\Om_{\ph,\ph'}$ be bounded.
With the notation
\[
\de_\ph \col K \to B, \qquad
\de_\ph(q,q') = \ph(q') - \ph(q),
\]
we get a Banach space by setting
\begin{gather}
\label{eqdefCholKB}
\Chol(K,B) \defeq \{\, \ph\col K\to B, \; \text{$\gC^1$-holomorphic, such that
$\nor{\ph}{K,B}<\infty$} \,\}, \\[1ex]
\label{eqdefnorphKB}
\nor{\ph}{K,B} \defeq n_0(\ph) + n_1(\ph) + n_2(\ph), \quad
\left| \begin{aligned}
n_0(\ph) &\defeq \sup_{K} |\ph|, \\ % \quad
n_1(\ph) &\defeq \max\big\{ \sup_{K}{|\ph'|}, \sup_{K\times K}|\de \ph| \big\}, \\ % \quad
n_2(\ph) &\defeq \sup_{K\times K}|\Om_{\ph,\ph'}|.
\end{aligned}\right.
\end{gather}
This norm is not the standard one (see for instance \cite{MS1}) but is
equivalent to it.
%%%%%%%%%%%%%%%%%%%%%%%%%%%%%%
An elementary property is
\begin{lemma}	\label{propcrit}
If $\ell: B \to B_0$ is a bounded linear map of norm~$L$ between complex
Banach spaces, then
$\ph \in \Chol(K,B) \ \Rightarrow\  \ell \circ \ph \in  \Chol(K,B_0)$ and
\begin{equation}\label{eq:lincomp}
\nor{\ell\circ\ph}{K,B_0} \le L \nor{\ph}{K,B}.
\end{equation}
Moreover, if $\ell$ is an isometry, then the inequality in~\eqref{eq:lincomp} is in fact an equality.
\end{lemma}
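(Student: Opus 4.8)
The plan is to verify directly that each of the three seminorms $n_0$, $n_1$, $n_2$ making up $\nor{\,\cdot\,}{K,B}$ is non-increasing under post-composition with $\ell$, with the scaling factor $L$ coming out uniformly. First I would check that $\ell\circ\ph$ is $\gC^1$-holomorphic: since $\ph$ is continuous and there is a continuous $\ph'\col K\to B$ with $\Om_{\ph,\ph'}$ continuous on $K\times K$, the composites $\ell\circ\ph$ and $\ell\circ\ph'$ are continuous (linear bounded maps are continuous), and one computes from~\eqref{eqdefOmphpsi} that $\Om_{\ell\circ\ph,\,\ell\circ\ph'} = \ell\circ\Om_{\ph,\ph'}$, using linearity of $\ell$ in the quotient $\frac{\ell\ph(q')-\ell\ph(q)}{q'-q} = \ell\big(\frac{\ph(q')-\ph(q)}{q'-q}\big)$; hence $\Om_{\ell\circ\ph,\,\ell\circ\ph'}$ is continuous, so $\ell\circ\ph$ is $\gC^1$-holomorphic with $(\ell\circ\ph)' = \ell\circ\ph'$ (uniqueness of the derivative on the perfect set $K$ forces this identification).

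Next I would record the three elementary estimates. From $|\ell(b)|\le L|b|$ we get $\sup_K|\ell\circ\ph| \le L\sup_K|\ph|$, i.e.\ $n_0(\ell\circ\ph)\le L\,n_0(\ph)$. For $n_1$: $(\ell\circ\ph)' = \ell\circ\ph'$ gives $\sup_K|(\ell\circ\ph)'|\le L\sup_K|\ph'|$, and $\de_{\ell\circ\ph}(q,q') = \ell\big(\de_\ph(q,q')\big)$ gives $\sup_{K\times K}|\de(\ell\circ\ph)|\le L\sup_{K\times K}|\de\ph|$; taking the max yields $n_1(\ell\circ\ph)\le L\,n_1(\ph)$. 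For $n_2$: the identity $\Om_{\ell\circ\ph,\,\ell\circ\ph'} = \ell\circ\Om_{\ph,\ph'}$ from the first paragraph immediately gives $n_2(\ell\circ\ph)\le L\,n_2(\ph)$. Summing the three inequalities gives~\eqref{eq:lincomp}, and in particular $\nor{\ell\circ\ph}{K,B_0}<\infty$, so $\ell\circ\ph\in\Chol(K,B_0)$.

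Finally, if $\ell$ is an isometry, every inequality $|\ell(b)|\le L|b|$ above becomes the equality $|\ell(b)| = |b|$ (with $L=1$), so each of $n_0$, $n_1$, $n_2$ is preserved exactly, whence $\nor{\ell\circ\ph}{K,B_0} = \nor{\ph}{K,B}$. There is no real obstacle here: the only point requiring a line of care is the algebraic identity $\Om_{\ell\circ\ph,\,\ell\circ\ph'}=\ell\circ\Om_{\ph,\ph'}$ and the parallel identity $(\ell\circ\ph)'=\ell\circ\ph'$, which rest on linearity of $\ell$ and the uniqueness of the $\gC^1$-holomorphic derivative on a perfect set; everything else is a direct application of the operator-norm bound termwise.
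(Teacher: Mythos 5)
The paper itself provides no proof of Lemma~\ref{propcrit}, dismissing it as ``an elementary property''; your argument is exactly the verification a reader would be expected to supply, and it is correct. The key identities $(\ell\circ\ph)'=\ell\circ\ph'$, $\de_{\ell\circ\ph}=\ell\circ\de_\ph$, and $\Om_{\ell\circ\ph,\,\ell\circ\ph'}=\ell\circ\Om_{\ph,\ph'}$ all follow from linearity of~$\ell$ (including at the diagonal where both sides vanish), and uniqueness of the derivative on a perfect set, which you rightly invoke, justifies the first. Applying $|\ell(b)|_{B_0}\le L|b|_B$ termwise then bounds each of $n_0$, $n_1$, $n_2$ by $L$ times the corresponding seminorm of~$\ph$, and for an isometry these become equalities at every point of~$K$ or $K\times K$, so the suprema are preserved exactly. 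One stylistic remark: your opening phrase ``non-increasing under post-composition'' is misleading when $L>1$, but the parenthetical about the scaling factor~$L$ makes the intended meaning clear; and strictly speaking the equality case presupposes $B\neq\{0\}$ so that the isometry indeed has operator norm~$1$, which is of course the only case of interest.
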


%%%%%%%%%%%%%%%%%%%%%%%%%%%%%%

As a matter of fact, $\Chol(K) \defeq \Chol(K,\CC)$ is a Banach algebra and,
if $B$ is a Banach algebra, then $\Chol(K,B)$ is a Banach algebra on the algebra
$\Chol(K)$:

%%%%%%%%%%%%%%%%%%%%%%%%%%%%%%
\begin{lemma}	\label{lemBanachAlg}
For any perfect subset~$K$ of~$\CC$,
\begin{enumerate}
\item[i)] $\la \in \Chol(K)$ and $\ph \in \Chol(K,B)$
$\ \Rightarrow\ $
$\la \cdot \ph  \in \Chol(K,B)$ and 
\begin{equation}	\label{ineqnorlaph}
\nor{\la\cdot\ph}{K,B} \le
\nor{\la}{K} \nor{\ph}{K,B};
\end{equation}
moreover, when viewed as an element of $\Chol(K)$, the constant function~$1$ has
norm $\nor{1}{K} = 1$;
\item[ii)] if $B$ is a Banach algebra (not necessarily commutative), then 
$\ph,\psi \in \Chol(K,B)$
$\,\Rightarrow\,$
$\ph \cdot \psi \in \Chol(K,B)$ and
\begin{equation}	\label{ineqnorprod}
\nor{\ph\cdot\psi}{K,B} \le 
\nor{\ph}{K,B} \nor{\psi}{K,B}.
\end{equation}
\end{enumerate}
\end{lemma}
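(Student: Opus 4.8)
The plan is to prove Lemma~\ref{lemBanachAlg} by carefully estimating the three semi-norms $n_0$, $n_1$, $n_2$ of the relevant products. First I would record the two algebraic identities on which everything rests: the derivative of a product, and the ``difference quotient minus derivative'' expression for a product. For $\ph\cdot\psi$ with $\ph,\psi\in\Chol(K,B)$, the natural candidate for the derivative is $(\ph\cdot\psi)' = \ph'\cdot\psi + \ph\cdot\psi'$, and one checks directly from the definition~\eqref{eqdefOmphpsi} that
\[
\Om_{\ph\cdot\psi,\,\ph'\cdot\psi+\ph\cdot\psi'}(q,q')
= \Om_{\ph,\ph'}(q,q')\cdot\psi(q')
+ \ph'(q)\cdot\bigl(\psi(q')-\psi(q)\bigr)
+ \ph(q)\cdot\Om_{\psi,\psi'}(q,q'),
\]
valid for $q\neq q'$ (and trivially for $q=q'$). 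This identity simultaneously shows that $\ph\cdot\psi$ is $\gC^1$-holomorphic—continuity of the right-hand side in $(q,q')$ follows from continuity of $\ph,\psi,\ph',\psi',\Om_{\ph,\ph'},\Om_{\psi,\psi'}$—and gives the bound on $n_2$. The case (i) of a scalar-valued $\la$ multiplying $\ph$ is then just the special case $B_0=\CC$ acting on $B$ by scalar multiplication, which is an isometric-in-the-obvious-sense bilinear pairing, so (ii) really contains (i); I would state (i) first only because it is used to identify $\Chol(K)$ as the coefficient algebra and to pin down $\nor{1}{K}=1$, the latter being immediate since for the constant function $n_0=1$, $n_1=0$, $n_2=0$.

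Next I would do the bookkeeping for the norm. Writing $\nor{\ph}{K,B}=n_0(\ph)+n_1(\ph)+n_2(\ph)$ and similarly for $\psi$, I want to show $n_0(\ph\cdot\psi)+n_1(\ph\cdot\psi)+n_2(\ph\cdot\psi)\le \nor{\ph}{}\,\nor{\psi}{}$ where the product on the right, when expanded, is
\[
\bigl(n_0(\ph)+n_1(\ph)+n_2(\ph)\bigr)\bigl(n_0(\psi)+n_1(\psi)+n_2(\psi)\bigr)
= \sum_{i,j} n_i(\ph)\,n_j(\psi).
\]
So there are nine product terms available on the right. The estimates are: $n_0(\ph\cdot\psi)\le n_0(\ph)n_0(\psi)$ directly from submultiplicativity of the norm on the Banach algebra $B$; $\sup_K|(\ph\cdot\psi)'|\le n_0(\ph)\sup|\psi'| + \sup|\ph'|\,n_0(\psi) \le n_0(\ph)n_1(\psi)+n_1(\ph)n_0(\psi)$, and $\sup|\de(\ph\cdot\psi)|\le n_0(\ph)\sup|\de\psi|+\sup|\de\ph|\,n_0(\psi)$ using $\de(\ph\cdot\psi)(q,q')=\ph(q')\de\psi(q,q')+\de\ph(q,q')\psi(q)$ (or the symmetric split), again bounded by $n_0(\ph)n_1(\psi)+n_1(\ph)n_0(\psi)$; hence $n_1(\ph\cdot\psi)\le n_0(\ph)n_1(\psi)+n_1(\ph)n_0(\psi)$; and finally from the displayed identity for $\Om$, $n_2(\ph\cdot\psi)\le n_2(\ph)n_0(\psi)+n_1(\ph)n_1(\psi)+n_0(\ph)n_2(\psi)$, where I bound $\sup|\psi(q')|$ by $n_0(\psi)$, $\sup|\psi(q')-\psi(q)|$ by $\sup|\de\psi|\le n_1(\psi)$, $\sup|\ph'(q)|\le n_1(\ph)$, and $\sup|\ph(q)|\le n_0(\ph)$. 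Adding the three lines, every term that appears is one of the nine products $n_i(\ph)n_j(\psi)$ and each appears with coefficient at most $1$, so the total is $\le \nor{\ph}{}\nor{\psi}{}$. (If one worries about which of the symmetric splits to use for $\de(\ph\cdot\psi)$, either choice works since both $\ph(q'),\ph(q)$ are bounded by $n_0(\ph)$ and both $\psi(q),\psi(q')$ by $n_0(\psi)$.)

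The only genuinely delicate point—and the step I would flag as the main obstacle—is verifying the continuity of $\Om_{\ph\cdot\psi,\,\ph'\cdot\psi+\ph\cdot\psi'}$ on the whole of $K\times K$, \emph{including the diagonal}. Away from the diagonal this is clear from the algebraic identity. On the diagonal the subtlety is the standard one for $\gC^1$-holomorphy: as $(q,q')\to(q_0,q_0)$ off the diagonal, each of the three terms on the right of the identity has a limit ($\Om_{\ph,\ph'}(q,q')\to0$ because $\Om_{\ph,\ph'}$ is continuous and vanishes on the diagonal; $\psi(q')-\psi(q)\to0$; $\Om_{\psi,\psi'}(q,q')\to0$), so the sum tends to $0$, which is exactly $\Om_{\ph\cdot\psi,(\ph\cdot\psi)'}(q_0,q_0)$. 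Hence the function extends continuously by $0$ on the diagonal, matching the defining convention, and $\ph\cdot\psi$ is $\gC^1$-holomorphic with derivative $\ph'\cdot\psi+\ph\cdot\psi'$. After that, the norm inequality is the routine bookkeeping above, and Lemma~\ref{lemBanachAlg} follows; Lemma~\ref{propcrit} is the even simpler observation that $\Om_{\ell\circ\ph,\ell\circ\ph'}=\ell\circ\Om_{\ph,\ph'}$ and $\de_{\ell\circ\ph}=\ell\circ\de_\ph$, so each of $n_0,n_1,n_2$ is multiplied by at most $L$ (and exactly by $L$, hence the norm exactly, when $\ell$ is an isometry).
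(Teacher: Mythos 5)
Your proof is correct and follows essentially the same approach as the paper's: write down a product identity for $\Om_{\ph\psi,\,\ph'\psi+\ph\psi'}$ splitting it into three terms, read off continuity and the bound $n_2(\ph\psi)\le n_0n_2+n_1n_1+n_2n_0$, combine with the analogous bounds for $n_0$ and $n_1$, and observe that the six resulting terms are dominated by the nine-term expansion of $\nor{\ph}{K,B}\nor{\psi}{K,B}$. The only cosmetic difference is that you use the ``mirror'' split $\Om_\ph\cdot\psi(q')+\ph'(q)\cdot\de_\psi+\ph(q)\cdot\Om_\psi$ where the paper uses $\ph(q')\cdot\Om_\psi+\de_\ph\cdot\psi'(q)+\Om_\ph\cdot\psi(q)$; both yield identical bounds, and your extra remark on continuity at the diagonal just makes explicit what the paper leaves implicit.
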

%%%%%%%%%%%%%%%%%%%%%%%%%%%%%%

\begin{proof}
The proofs of the two inequalities are similar, we content ourselves with~\eqref{ineqnorprod}.
We set $\chi = \ph\psi$ and $\chi_1 = \ph\psi'+\ph'\psi$; simple computations yield
\begin{gather*}
\de_\chi(q,q') = \ph(q')\cdot\de_\psi(q,q') + \de_\ph(q,q')\cdot\psi(q),\\
\Om_{\chi,\chi_1}(q,q') = \ph(q') \cdot \Om_{\psi,\psi'}(q,q')
+ \de_\ph(q,q') \cdot \psi'(q) + \Om_{\ph,\ph'}(q,q') \cdot \psi(q),
\end{gather*}
whence the $\gC^1$-holomorphy follows, with $\chi' = \chi_1$; moreover, 
\begin{align*}
%
%\left\{ \begin{aligned}
%
n_0(\chi) &\le n_0(\ph) \cdot n_0(\psi), \\
n_1(\chi) &\le n_0(\ph) \cdot n_1(\psi) + n_1(\ph) \cdot n_0(\psi), \\
n_2(\chi) &\le n_0(\ph) \cdot n_2(\psi) +  n_1(\ph) \cdot n_1(\psi) 
+ n_2(\ph) \cdot n_0(\psi), \\
%
% \end{aligned} \right.
%
% \end{equation*}
%
\intertext{whence}
%
% \begin{align*}
%
\nor{\chi}{K,B} 
&\le n_0(\ph) \big( n_0(\psi) + n_1(\psi) + n_2(\psi) \big)
+ n_1(\ph) \big( n_0(\psi) + n_1(\psi) \big) + n_2(\ph) n_0(\psi) \\
&\le \nor{\ph}{K,B} \nor{\psi}{K,B}.
\end{align*}
\end{proof}

%%%%%%%%%%%%%%%%%%%%%%%%%%%%%%%%%%%%%%%%%%%
\subsubsection*{Case of a perfect subset of~$\PP$}
%%%%%%%%%%%%%%%%%%%%%%%%%%%%%%%%%%%%%%%%%%%

To be able to apply the $\gH^1$-quasianalyticity result of \cite{MS2}, we needed
to add~$0$ and~$\infty$ to~$E(A_M^\CC)$ in the definition~\eqref{eqdefKM}
of~$K_M$, so as to have a compact subset of~$\PP$ of the appropriate form.
We thus need to explain the definition of $\Chol(K,B)$ when $K$ is a perfect
subset of the Riemann sphere~$\PP$.

Let $\ti K = \{\, \xi\in\CC \mid 1/\xi \in K\,\}$ (with the convention
$1/0=\infty$);
both $\ti K$ and $K\cap\CC$ are perfect subsets of~$\CC$.
Our definition of $\gC^1$-holomorphy is:
\begin{quote}
\emph{A function $\ph \col K\to B$ is $\gC^1$-holomorphic iff 
its restriction $\ph_{|K\cap\CC}$ is $\gC^1$-holomorphic on $K\cap\CC$ 
and the function $\ti\ph \col \xi\in\ti K \mapsto \ph(1/\xi)$ is
$\gC^1$-holomorphic on $\ti K$.}
\end{quote}
We then define
\begin{equation}	\label{defnorKB}
\nor{\ph}{K,B} \defeq \max\big\{ \nor{\ph_{|K\cap\CC}}{K\cap\CC,B}, 
\nor{\ti\ph}{\ti K,B} \big\}
\end{equation}
(\ie we cover~$\PP$ by two charts,
using~$q$ as a complex coordinate in~$\CC$ and
$\xi=\frac{1}{q}$ as a complex coordinate in $\PP - \{0\}$).
Formula~\eqref{defnorKB} defines a norm on the space $\Chol(K,B)$ of all
$B$-valued $\gC^1$-holomorphic functions on~$K$,
which makes it a Banach space.
It is easy to check that Lemma~\ref{propcrit} is still valid, as well as Lemma~\ref{lemBanachAlg}:
\begin{quote}
\emph{$\Chol(K) \defeq \Chol(K,\CC)$ is a Banach algebra and,
if $B$ is a Banach algebra, $\Chol(K,B)$ is a Banach algebra on the algebra
$\Chol(K)$.}
\end{quote}

%%%%%%%%%%%%%%%%%%%%%%%%%%%%%%%%%%%%%%%%%%%
%%%%%%%%%%%%%%%%%%%%%%%%%%%%%%%%%%%%%%%%%%%

\subsection{Preliminary small divisor estimates for $q\in K_M$}

%%%%%%%%%%%%%%%%%%%%%%%%%%%%%%%%%%%%%%%%%%%
%%%%%%%%%%%%%%%%%%%%%%%%%%%%%%%%%%%%%%%%%%%

Here is an example of a $\gC^1$-holomorphic function which, in a sense,
is the elementary brick of all the solutions of small divisor problems:
\begin{proposition}	\label{lemelbrick}
Let $\tau>0$, $M > 2\ze(1+\tau)$ and~$K_M$ as in~\eqref{eqdefKM}.
Then, for any $k\in\ZZ^*$, the formula 
\begin{equation}	\label{eqdeflak}
\la_k(q) = \frac{1}{q^k-1}
\end{equation}
defines a function
$\la_k \in \Chol(K_M)$ satisfying
\begin{equation}	\label{ineqnorlak}
\nor{\la_k}{K_M} \le 7 M^2 |k|^{3+2\tau}.
\end{equation}
\end{proposition}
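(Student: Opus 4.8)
The plan is to estimate $\la_k$ separately on the two charts appearing in the definition~\eqref{defnorKB}, namely on $K_M \cap \CC$ in the coordinate $q$ and on $\ti K_M$ in the coordinate $\xi = 1/q$. By the symmetry $q \mapsto 1/q$ of the set $K_M$ (which follows from $A_M^\CC$ being invariant under $\om \mapsto -\om$, hence $K_M$ being invariant under $q\mapsto 1/q$), one has $\la_k(1/\xi) = \frac{1}{\xi^{-k}-1} = \frac{-\xi^k}{\xi^k - 1} = -1 - \la_k(\xi)$ when $k>0$ (and similarly $\la_{-k}=\la_k$-type relations let us reduce to $k\ge1$), so the two chart estimates are essentially the same computation up to an additive constant; I would therefore concentrate on bounding $\nor{\la_k}{K_M\cap\CC}$ and remark that the other chart is handled identically. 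Since $\la_k(q) = \la_1(q^k)$ and $q^k$ maps $A_M^\CC$-type sets appropriately, one might even reduce everything to $k=1$ at the cost of tracking how the Diophantine constant degrades under $q \mapsto q^k$ — but it is probably cleaner to work directly with $\la_k$.

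The heart of the matter is a lower bound for $|q^k - 1|$ when $q = E(\om) = \ee^{2\pi\I\om}$ with $\om \in A_M^\CC$, and, more importantly, bounds for the divided differences that enter $n_1$ and $n_2$ in~\eqref{eqdefnorphKB}. First I would establish the pointwise bound: writing $q = \ee^{2\pi\I\om}$, $q^k - 1 = \ee^{2\pi\I k\om} - 1$, and using the elementary inequality $|\ee^{2\pi\I z} - 1| \ge c\,\dist(z, \ZZ)$ valid on a strip (with an extra exponential factor $\ee^{-2\pi|\IM z|}$-type control, but here $|\IM(k\om)|$ is what makes $|q^k-1|$ eventually large, so the small-divisor danger is only near $\SS$). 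For $\om$ real Diophantine, $\dist(k\om, \ZZ) = m\,|\om - n/m| \cdot$(something) and the Diophantine condition~\eqref{ineqomDiophMtau} gives $\dist(k\om,\ZZ) \ge \frac{1}{M|k|^{1+\tau}}$ after taking $m = |k|$; for complex $\om \in A_M^\CC$ the definition~\eqref{eqdefAMCC} says $|\IM\om|$ is at least the horizontal distance to $A_M^\RR$, and a short geometric argument shows $|q^k-1| \gtrsim |k|^{-1}\bigl(\dist(\RE(k\om),\ZZ) + |\IM(k\om)|\bigr) \gtrsim \frac{1}{M|k|^{2+\tau}}$. This yields $n_0(\la_k) = \sup_{K_M}|\la_k| \lesssim M|k|^{2+\tau}$.

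Next come the divided differences. For $q, q' \in K_M$ with $q \ne q'$, one writes
\[
\la_k(q') - \la_k(q) = \frac{1}{q'^k - 1} - \frac{1}{q^k - 1}
= -\,\frac{q'^k - q^k}{(q'^k-1)(q^k-1)},
\]
and $|q'^k - q^k| \le |k|\max(|q|,|q'|)^{k-1}|q'-q| \le |k|\,\ee^{2\pi(|k|-1)R'}|q'-q|$ on the relevant bounded portion of $K_M$ — but here I must be careful: $K_M$ is not bounded in $\CC$ (it reaches $\infty$), which is exactly why the second chart is needed, and on $K_M\cap\CC$ one should instead exploit that the dangerous region is $|q|$ near $1$ while for $|q|$ large $|q^k-1|$ is large. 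A clean way is to split $K_M\cap\CC$ into $\{|q|\le 2\}$ (bounded, do the direct estimate) and $\{|q|\ge 2\}$ (where $\la_k$ and its derivative are $O(2^{-|k|})$ and the divided difference is controlled by the derivative bound via the mean value inequality along the star-shaped-in-$\xi$ geometry). Combining, $|\de\la_k(q,q')| \lesssim |k| \cdot (M|k|^{2+\tau})^2 = M^2|k|^{5+2\tau}$, which already has the wrong power — so the correct bookkeeping must use $\dist$ to $\SS$ to absorb one factor. The decisive trick, as in \cite{MS1}, is that the divided difference $\la_k'(q) = -kq^{k-1}/(q^k-1)^2$ and $\Om_{\la_k,\la_k'}$ gain from the identity $|q'^k - 1| \ge \demi|q^k-1|$ when $|q'-q|$ is small compared to $|q^k-1|/|k|$, and a crude global bound when it is not; a careful two-regime split gives $n_1(\la_k), n_2(\la_k) \lesssim M^2|k|^{3+2\tau}$.

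\textbf{Main obstacle.} The main difficulty is \emph{not} the pointwise small-divisor bound (that is a direct consequence of the Diophantine condition~\eqref{ineqomDiophMtau} plus the geometry of~$A_M^\CC$), but rather getting the divided-difference quantities $n_1$ and $n_2$ down to the \emph{same} exponent $3+2\tau$ in $|k|$ rather than the naive $5+2\tau$ or $6+3\tau$ one gets by brute force; this requires the two-regime comparison argument (comparing $|q^k-1|$ with $|q'^k-1|$ according to whether $q'$ is within distance $\sim |q^k-1|/|k|$ of $q$ or not), and the bookkeeping of the constant~$7$. A secondary technical point is handling the non-compactness of $K_M\cap\CC$: one must either pass to the $\xi$-chart near $\infty$ (using the relation $\la_k(1/\xi) = -1-\la_k(\xi)$ noted above to recycle the estimate) or observe that $\la_k$ decays super-exponentially for $|q|$ large so the $\infty$ end is harmless. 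Once both charts are bounded by $7M^2|k|^{3+2\tau}$, the definition~\eqref{defnorKB} gives~\eqref{ineqnorlak} directly.
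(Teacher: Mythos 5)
Your high-level structure (two charts on~$\PP$, reduction to $k\ge1$ via the $q\mapsto1/q$ symmetry of~$K_M$, Diophantine lower bound for $|q^k-1|$) matches the paper's, but there are genuine errors in the middle that lead you to chase a difficulty that is not there and to miss the one idea that actually closes the estimate.

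First, the pointwise bound carries a spurious factor of~$|k|$. From~\eqref{ineqomDiophMtau}, taking $m=|k|$ gives $\dist(k\om_*,\ZZ)\ge\frac{1}{M|k|^{1+\tau}}$ for $\om_*\in A_M^\RR$; for $\om\in A_M^\CC$ one then loses only a factor $\sqrt2$ via $\dist(k\om,\ZZ)\ge\tfrac{1}{\sqrt2}\dist(k\om_*,\ZZ)$, and $|q^k-1|=|\ee^{2\pi\I k\om}-1|\ge\dist(k\om,\ZZ)$ when $|\IM(k\om)|\le\demi$ (else $|q^k-1|\ge3/4$). Hence $n_0(\la_k)\le\sqrt2\,M|k|^{1+\tau}$; the $|k|^{-1}$ you inserted in ``$|q^k-1|\gtrsim|k|^{-1}(\dist(\RE(k\om),\ZZ)+|\IM(k\om)|)$'' does not belong there, and it is precisely this error that makes the later exponents look off by two.

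Second, the quantity $\de_\ph(q,q')=\ph(q')-\ph(q)$ appearing in~$n_1$ is a \emph{plain} difference, not a divided difference, so $\sup_{K\times K}|\de\la_k|\le2\,n_0(\la_k)\le2\sqrt2\,M|k|^{1+\tau}$ is immediate. Your estimate $|\de\la_k|\lesssim|k|\cdot(M|k|^{2+\tau})^2$ is for a divided difference and is irrelevant to~$n_1$; the ``wrong power $M^2|k|^{5+2\tau}$'' is an artifact of this confusion compounded with the first error.

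Third, and most importantly, no two-regime split in $|q'-q|$ versus $|q^k-1|/|k|$ is needed (and since $n_1,n_2$ are defined as suprema over all of $K_M\times K_M$, a split keyed to the size of $|q'-q|$ would be delicate to make uniform). The missing ingredient is the \emph{reinforced} small-divisor estimate
\[
0\le\ell\le k,\;q\in K_M\ens\Rightarrow\ens\Bigl|\frac{q^\ell}{q^k-1}\Bigr|\le\sqrt2\,M|k|^{1+\tau},
\]
obtained from the plain one by distinguishing $|q|\le1$ (then $|q^\ell|\le1$) from $|q|\ge1$ (then write $\frac{q^\ell}{q^k-1}=-\frac{q^{-(k-\ell)}}{q^{-k}-1}$ and apply the $|q|\le1$ case to $1/q\in K_M$). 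This single uniform estimate replaces both your ad hoc disk decomposition $\{|q|\le2\}\cup\{|q|\ge2\}$ and your proposed two-regime comparison. With it, the explicit algebraic identity
\[
\Om_{\la_k,\la_k'}(q,q')=k\,\frac{q^{k-1}}{(q^k-1)^2}-\sum_{a+b=k-1}\frac{q^a}{q^k-1}\cdot\frac{q'^b}{q'^k-1}
\]
has $k$ terms in the sum, each bounded by $\bigl(\sqrt2\,M|k|^{1+\tau}\bigr)^2=2M^2|k|^{2+2\tau}$, so $|\Om_{\la_k,\la_k'}|\le4M^2|k|^{3+2\tau}$, and similarly $|\la_k'(q)|\le2M^2|k|^{3+2\tau}$. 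Since $\sqrt2\,M|k|^{1+\tau}<M^2|k|^{3+2\tau}$ (using $\tau>0$, $M>2$), the total $n_0+n_1+n_2$ is $<7M^2|k|^{3+2\tau}$, matching~\eqref{ineqnorlak}.
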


%%%%%%%%%%%%%%%%%%%%%%%%%%%%%%%%%%%%%%%%%%%%%%%%
%%%%%%%%%%%%%%%%%%%%%%%%%%%%%%%%%%%%%%%%%%%%%%%%

Of course, the functions~$\la_k$ are just the restrictions to~$K_M$ of meromorphic functions
but, considering them simultaneously, one easily gets non-meromorphic
$\gC^1$-holomorphic functions;
for instance, for any~$s$ in the open unit disk, the series of functions $\ph =
\sum_{k\ge1} s^k \la_k$ is convergent in $\Chol(K_M)$ and defines a
$\gC^1$-holomorphic function for which the unit circle is a natural boundary
(because of the accumulation of poles at the roots of unity)---this is an example of
Borel-Wolff-Denjoy series (see \eg \cite{MS1}).

The proof of Proposition~\ref{lemelbrick} will make use of use the Diophantine
condition~\eqref{ineqomDiophMtau} in the form of

\begin{lemma}	\label{lemmajorlak}
\begin{equation}	\label{ineqlak}
k\in\ZZ^* \ens\text{and}\ens q\in K_M
\ens\Rightarrow\ens
\left| \frac{1}{q^k-1} \right| \le \sqrt 2 M |k|^{1+\tau}.
\end{equation}
\end{lemma}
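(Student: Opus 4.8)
The plan is to obtain a lower bound for $|q^k-1|$ by splitting according to whether $q$ lies far from or close to the unit circle. The points $q\in\{0,\infty\}$ are trivial (there $\la_k(q)\in\{0,-1\}$, and $\sqrt2\,M|k|^{1+\tau}>1$), so assume $q=\ee^{2\pi\I\om}$ with $\om = a+\I b \in A_M^\CC$; by definition of $A_M^\CC$ there is $\om_*\in A_M^\RR$ with $|b|\ge|\om_*-a|$. Setting $r = |q| = \ee^{-2\pi b}$, a direct computation gives
\[
|q^k-1|^2 = (r^k-1)^2 + 4\,r^k\sin^2(\pi k a),
\]
which exhibits the two sources of a lower bound: $r^k$ being away from $1$, or $\sin(\pi k a)$ being away from $0$.

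For the angular term I would invoke the Diophantine condition~\eqref{ineqomDiophMtau} with $m=|k|$, which gives $\dist(k\om_*,\ZZ)\ge\frac{1}{M|k|^{1+\tau}}$, hence $|\sin(\pi k\om_*)| = \sin\!\big(\pi\,\dist(k\om_*,\ZZ)\big)\ge\frac{2}{M|k|^{1+\tau}}$ by the concavity inequality $\sin(\pi x)\ge 2x$ on $[0,\tfrac12]$; then $|\sin(\pi k a) - \sin(\pi k\om_*)|\le\pi|k|\,|a-\om_*|\le\pi|k||b|$ yields $|\sin(\pi k a)|\ge\frac{2}{M|k|^{1+\tau}}-\pi|k||b|$.

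Now the case analysis. If $|r^k-1|\ge\frac{1}{\sqrt2\,M|k|^{1+\tau}}$ then $|q^k-1|\ge|r^k-1|$ already gives~\eqref{ineqlak}. Otherwise $|r^k-1| < \frac{1}{\sqrt2\,M|k|^{1+\tau}} \le \frac{1}{\sqrt2\,M} < \frac12$ (since $M>2\ze(1+\tau)>2$), so $r^k > \frac12$; moreover $r^k = \ee^{-2\pi kb}$ with $|r^k-1|<\frac12$ forces $2\pi|kb|<1$, and then the elementary bound $|\ee^x-1|\ge|x|/2$ for $|x|\le1$ gives $|r^k-1|\ge\pi|k||b|$. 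I then split once more: if $\pi|k||b| > \frac{1}{M|k|^{1+\tau}}$, I conclude from $|q^k-1|\ge|r^k-1|\ge\pi|k||b|$; if $\pi|k||b|\le\frac{1}{M|k|^{1+\tau}}$, the angular estimate above gives $|\sin(\pi k a)|\ge\frac{1}{M|k|^{1+\tau}}$, so the identity yields $|q^k-1|^2 \ge 4r^k\sin^2(\pi k a)\ge\frac{2}{M^2|k|^{2+2\tau}}$. In every branch $|q^k-1|\ge\frac{1}{\sqrt2\,M|k|^{1+\tau}}$, which is~\eqref{ineqlak}.

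The only slightly delicate point is to arrange the thresholds so that each branch clears the common bound $\frac{1}{\sqrt2\,M|k|^{1+\tau}}$; everything else is routine — the scalar inequalities $\sin(\pi x)\ge 2x$ on $[0,\tfrac12]$ and $|\ee^x-1|\ge|x|/2$ on $[-1,1]$, and the algebraic identity for $|q^k-1|^2$. Incidentally, of the hypothesis $M>2\ze(1+\tau)$ only $M>2$ is used here; its genuine role (positivity of the measure of $A_M^\RR$) lies elsewhere.
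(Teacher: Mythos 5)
Your argument is correct, and it is genuinely different from the paper's. The paper first disposes of the case $|\IM(k\om)|\ge\tfrac12$ directly, and then for $|\IM(k\om)|\le\tfrac12$ reduces the problem to two self-contained lemmas about a complex number $z$: (i) $|\IM z|\le\tfrac12 \Rightarrow |\ee^{2\pi\I z}-1|\ge\dist(z,\ZZ)$, proved by bounding the entire function $z/(\ee^{2\pi\I z}-1)$ via its partial-fraction expansion; and (ii) the purely geometric fact that $|\IM z|\ge|x-\RE z|$ (i.e.\ the defining condition of $A_M^\CC$) implies $\dist(z,\ZZ)\ge\tfrac1{\sqrt2}\dist(x,\ZZ)$. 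These two facts, applied to $z=k\om$, $x=k\om_*$, give the claim. Your route instead works directly with the radial/angular decomposition $|q^k-1|^2=(r^k-1)^2+4r^k\sin^2(\pi ka)$ and a two-level case analysis on which of the two nonnegative terms dominates, using only scalar calculus inequalities ($\sin(\pi x)\ge 2x$ on $[0,\tfrac12]$, $|\ee^x-1|\ge|x|/2$ on $[-1,1]$, and a Lipschitz bound on $\sin$). Your version is more elementary — no series expansion of $\coth$ — at the price of somewhat more bookkeeping with the thresholds; the paper's version is slightly more modular, in that facts (i) and (ii) are each stated for an arbitrary complex $z$ and carry independent meaning. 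Both correctly use $M>2$ (and nothing more) from the hypothesis $M>2\ze(1+\tau)$, as you note. One small point worth tightening if you write this up: the step ``$|r^k-1|<\tfrac12$ forces $2\pi|kb|<1$'' is true because $\ee^{-x}\in(\tfrac12,\tfrac32)$ gives $x\in(-\ln\tfrac32,\ln2)\subset(-1,1)$, and it is worth saying so explicitly since the subsequent inequality $|\ee^x-1|\ge|x|/2$ is only invoked on $[-1,1]$.
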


%%%%%%%%%%%%%%%%%%%%%%%%%%%%%%%%%%%%%%%%%%%%%%%%
%%%%%%%%%%%%%%%%%%%%%%%%%%%%%%%%%%%%%%%%%%%%%%%%

\begin{proof}[Lemma~\ref{lemmajorlak} implies Proposition~\ref{lemelbrick}]
We set $\ti K_M = \{\, \xi \in \CC \mid 1/\xi \in K_M \,\}$ and observe that
$\ti K_M = K_M \cap \CC = E(A_M^\CC) \cup\{0\}$
(because $A_M^\CC$ is symmetric \wrt~$0$).
Let $\ti \la_k \col \xi\in\ti K_M \mapsto \la_k(1/\xi)$.
We must check that ${\la_k}_{|\ti K_M}$ and $\ti\la_k \in \Chol(\ti K_M)$
and control their norms.

Since $\la_{k} = \ti \la_{-k}$ and $\ti \la_{k} = \la_{-k}$, we can restrict
ourselves to the case $k\ge1$.

The function $\la \defeq {\la_k}_{|\ti K_M}$, 
being the restriction of a meromorphic function of~$\CC$ with poles off~$\ti K_M$, 
is $\gC^1$-holomorphic on~$\ti K_M$.
Inequality~\eqref{ineqlak} implies
$n_0(\la) \le \sqrt 2 M k^{1+\tau}$
and we compute
\begin{align*}
&\la'(q) = - k \frac{q^{k-1}}{(q^k-1)^2}, \\[1ex]
&\de_{\la}(q,q') = \la(q') - \la(q) = 
-\frac{ q'^k - q^k }{ (q^k-1)(q'^k-1) }, \\[2ex]
&\Om_{\la,\la'}(q,q') = k \frac{q^{k-1}}{(q^k-1)^2} -
\frac{ q'^{k-1} + q'^{k-2}q + \cdots + q' q^{k-2} + q^{k-1} }{%
(q^k-1)(q'^k-1) } \\
& \qquad \qquad \qquad \qquad \qquad = k \frac{q^{k-1}}{(q^k-1)^2} - \sum_{k-1 = a+b}
\frac{q^a}{(q^k-1)} \frac{q'^b}{(q'^k-1)}.
\end{align*}
We have $|\de_\la(q,q')| \le 2 n_0(\la) \le 2\sqrt 2 M k^{1+\tau}$ and, noticing that
\[
0\le \ell \le k \ens\text{and}\ens q\in K_M
\ens\Rightarrow\ens
\left| \frac{q^\ell}{q^k-1} \right| \le \sqrt 2 M |k|^{1+\tau}
\]
(this reinforcement of~\eqref{ineqlak} is obtained by distinguishing the cases
$|q|\le1$ and $|q|\ge1$, and by writing $\frac{q^\ell}{q^k-1} =
-\frac{q^{-(k-\ell)}}{q^{-k}-1}$ in the latter case),
we get
$|\la'(q)| \le 2 M^2 k^{3+2\tau}$
and $|\Om_{\la,\la'}(q,q')| \le 4 M^2 k^{3+2\tau}$.
Since $\sqrt 2 M k^{1+\tau} < M^2 k^{3+2\tau}$ 
(because $\tau>0$ and $M>2\ze(1+\tau)>2$), 
we conclude that
$\nor{\la}{\ti K_M} \le 7 M^2 k^{3+2\tau}$.

On the other hand,
$n_0(\ti\la_k) \le \sqrt 2 M |k|^{1+\tau}$ (still because of~\eqref{ineqlak}) and
$\ti\la_k = -1 -\la$, thus $\ti\la_k' = - \la'$
and $\de_{\ti\la_k} = - \de_{\la}$, $\Om_{\ti\la_k,\ti\la_k'} = -\Om_{\la,\la'}$,
hence $\nor{\ti\la_k}{\ti K_M} \le 7 M^2 k^{3+2\tau}$
and the proof is complete.
\end{proof}

%%%%%%%%%%%%%%%%%%%%%%%%%%%%%%%%%%%%%%%%%%%%%%%%
%%%%%%%%%%%%%%%%%%%%%%%%%%%%%%%%%%%%%%%%%%%%%%%%

\begin{proof}[Proof of Lemma~\ref{lemmajorlak}]
For $q=0$ or~$\infty$, the result is clearly true. We thus suppose
$q=E(\om)$ with $\om\in A_M^\CC$ and $k\in\ZZ^*$. We must prove that
\begin{equation}	\label{ineqexp}
\left|\ee^{2\pi\I k\om} - 1\right| \ge \frac{1}{\sqrt 2 M |k|^{1+\tau}}.
\end{equation}
This inequality is true if $|\IM(k\om)|\ge\demi$, since one then has 
$\left|\ee^{2\pi\I k\om} - 1\right| \ge \frac{3}{4} > \frac{1}{\sqrt 2}$ 
(as a consequence of
$|\ee^{2\pi\I k\om}| = \ee^{-2\pi\IM(k\om)} >4$ if $\IM(k\om)\le-\demi$
and $<\frac{1}{4}$ if $\IM(k\om)\ge\demi$) and $M>1$.

Inequality~\eqref{ineqexp} is thus a consequence of the existence of $\om_*\in
A_M^\RR$ such that
$|\IM \om| \ge |\om_* - \RE \om|$,
for which $\dist(k\om_*,\ZZ) \ge \frac{1}{M |k|^{1+\tau}}$,
and of the following facts
\begin{gather}
\label{ineqmerom}
\forall z\in \CC, \qquad
|\IM z| \le \demi \ens\Rightarrow\ens
|\ee^{2\pi\I z}-1| \ge \dist(z,\ZZ) \\
\label{ineqrealcomp}
\forall z\in\CC,\; \forall x\in\RR, \qquad
|\IM z| \ge |x-\RE z| \ens\Rightarrow\ens
\dist(z,\ZZ) \ge \frac{1}{\sqrt 2} \dist(x,\ZZ)
\end{gather}
applied to $z=k\om$ and $x=k\om_*$.

\medskip

\noindent Proof of~\eqref{ineqmerom}:
By periodicity, we may suppose $|\RE z|\le\demi$, hence $\dist(z,\ZZ) = |z|$.
It is then sufficient to bound the modulus of
\[
F(z) \defeq \frac{z}{\ee^{2\pi\I z}-1} = \frac{1}{2\pi\I} - \demi z 
+ \frac{1}{\pi\I} \sum_{\ell\ge1} \frac{z^2}{z^2-\ell^2}
\]
(this decomposition results from the identity
$F(z) = \frac{z}{2}(\coth(\pi\I z) - 1)$ and the
classical decomposition of $\coth X$).
But for $|\RE z|$ and $|\IM z|\le\demi$, we have
$|z|^2\le\demi$ and 
\[ 
|z^2-\ell^2|= |\ell-z|\cdot|\ell+z| \ge 
|\ell-\RE z|\cdot|\ell+\RE z| \ge \ell^2 - \frac{1}{4},
\]
whence
$\dst |F(z)| \le \frac{1}{2\pi} + \frac{1}{2\sqrt 2} + 
\frac{1}{2\pi}\sum_{\ell\ge1} \frac{1}{\ell^2 - \frac{1}{4}}
= \frac{3}{2\pi} + \frac{1}{2\sqrt 2} < 1$
in this range.

\medskip

\noindent Proof of~\eqref{ineqrealcomp}:
Let $d>0$ and $p\in \ZZ$ and suppose $|z-p| \le d$; then
\begin{multline*}
\dist(x,\ZZ) \le |x - p| = [\RE z-p + x-\RE z|
\le |\RE(z-p)| + |x-\RE z| \\
\le |\RE(z-p)| + |\IM z|
= |\RE(z-p)| + |\IM(z-p)|
\le \sqrt 2 d.
\end{multline*}
Hence $\dist(x,\ZZ) \le \sqrt 2 \dist(z,\ZZ)$.
\end{proof}

%%%%%%%%%%%%%%%%%%%%%%%%%%%%%%%%%%%%%%%%%%%
%%%%%%%%%%%%%%%%%%%%%%%%%%%%%%%%%%%%%%%%%%%

\subsection{Fourier analysis and functional composition in $\Chol\big(K,H^\infty(S_r,B)\big)$}
\label{secFourier}

%%%%%%%%%%%%%%%%%%%%%%%%%%%%%%%%%%%%%%%%%%%
%%%%%%%%%%%%%%%%%%%%%%%%%%%%%%%%%%%%%%%%%%%

We shall mainly deal with $\Chol(K_M,B)$ with target spaces $B = \CC$ or
$H^\infty(\DD_\rho)$ or
$\BB_{r,\rho} = H^\infty(S_r\times\DD_\rho)$,
with the notation \eqref{eqdefSRDDrho}--\eqref{eqdefBBRrho}.
Notice that $\BB_{r,\rho}$ is canonically isomorphic to
$\HSrB$ with $B = H^\infty(\DD_\rho)$,
where we denote by $H^\infty(D,B)$ the Banach space of all $B$-valued bounded
holomorphic functions on~$D$ 
endowed with the sup norm
(for any complex manifold~$D$ and any complex
Banach space~$B$) and $H^\infty(D)\defeq H^\infty(D,\CC)$.

Let $B$ denote any complex Banach space. We shall use Fourier analysis in
$\HSrB$, as indicated in the classical

\begin{lemma}	\label{lemFourier}
Let $r>0$ and $k\in \ZZ$.
The formula
\[
\cF_k \col \ph \mapsto \hat\ph_k \defeq 
\int_0^1  \ph(\th) \, \ee^{-2\pi\I k \th}\, \dd\th
\]
defines a bounded linear map $\cF_k \col \HSrB \to B$, with
\[
\norm{\hat\ph_k}_B \le  \ee^{-2\pi r|k|} \norm{\ph}_{\HSrB}.
\]
\end{lemma}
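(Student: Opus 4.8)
The statement to prove is Lemma~\ref{lemFourier}, an entirely classical fact about Fourier coefficients of bounded holomorphic functions on a complex strip $S_r$, with values in a Banach space $B$.

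The plan is as follows. First I would check that $\cF_k$ is well-defined and linear: for $\ph\in\HSrB$, the integrand $\th\mapsto\ph(\th)\ee^{-2\pi\I k\th}$ is continuous on the compact interval $[0,1]$ with values in $B$, so the Bochner (or Riemann) integral $\hat\ph_k$ exists in $B$, and linearity in $\ph$ is immediate from linearity of the integral. The trivial bound $\norm{\hat\ph_k}_B\le\norm{\ph}_{\HSrB}$ already shows boundedness, so the only real content is the exponential improvement $\ee^{-2\pi r|k|}$.

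The key step is a contour-shift argument. Since $\ph$ is holomorphic on $S_r=\{|\IM x|<r\}$ and $1$-periodic (it lives on $\CC/\ZZ$), for any $s$ with $|s|<r$ one can shift the path of integration from the segment $[0,1]$ on the real axis to the segment $[\I s,1+\I s]$: the function $x\mapsto\ph(x)\ee^{-2\pi\I k x}$ is holomorphic on $S_r$, and the two vertical sides of the rectangle with corners $0,1,1+\I s,\I s$ contribute nothing by $1$-periodicity of the integrand (the values on $x=0$ and $x=1$ agree). Hence
\[
\hat\ph_k=\int_0^1\ph(\th+\I s)\,\ee^{-2\pi\I k(\th+\I s)}\,\dd\th
=\ee^{2\pi k s}\int_0^1\ph(\th+\I s)\,\ee^{-2\pi\I k\th}\,\dd\th,
\]
and taking $B$-norms gives $\norm{\hat\ph_k}_B\le\ee^{2\pi k s}\,\norm{\ph}_{\HSrB}$ for every $s\in(-r,r)$. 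Choosing $s\to -r$ when $k\ge0$ and $s\to r$ when $k\le0$ (i.e. $s=-r\,\mathrm{sign}(k)$ in the limit), one obtains $\norm{\hat\ph_k}_B\le\ee^{-2\pi r|k|}\,\norm{\ph}_{\HSrB}$ after passing to the limit (the estimate is uniform in $s$ on any $[-r+\de,r-\de]$, so the limit is legitimate, or one may simply use a slightly smaller strip and then let it grow to $S_r$).

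There is essentially no hard part here; the only point requiring a word of care is that $\ph$ is assumed merely bounded and holomorphic on the \emph{open} strip $S_r$, not on its closure, so one cannot integrate directly over the boundary lines $\IM x=\pm r$. This is handled exactly as indicated above: work with $|s|<r$, obtain the bound $\ee^{2\pi k s}\norm{\ph}_{\HSrB}$, and only then let $|s|\uparrow r$. The Banach-valued setting introduces no new difficulty, since the only tools used are holomorphy of $B$-valued functions, Cauchy's theorem in a rectangle (valid for $B$-valued holomorphic functions, e.g. by composing with continuous linear functionals or directly), and the triangle inequality for the Bochner integral, $\bigl\Vert\int g\bigr\Vert_B\le\int\norm{g}_B$.
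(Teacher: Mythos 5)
Your proof is correct. The paper does not actually supply a proof of this lemma, describing it only as ``classical''; the contour-shift argument you give (move the integration segment to $\IM x = s$ using Cauchy's theorem on a rectangle, invoke $1$-periodicity to kill the vertical sides, take norms to get $\ee^{2\pi ks}\norm{\ph}$, then let $s\to -r\,\mathrm{sign}(k)$) is precisely the standard proof the paper is implicitly invoking, and you handle the one genuine subtlety — that $\ph$ is only bounded on the open strip, so one must work at $|s|<r$ and pass to the limit — correctly.
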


We shall also use the notation
\[
e_k(\th) = \ee^{2\pi\I k\th}, \qquad
\th\in\CC, \; k\in \ZZ
\]
and $\mean\ph \defeq \hat\ph_0$.
Observe that, if $\ph\in\HSrB$, the fact that $|e_k(\th)| = \ee^{-2\pi k\IM\th}$
implies that the series $\sum\hat\ph_k e_k(\th)$ converges to~$\ph(\th)$ for
every $\th\in S_r$, while $\sum \hat\ph_k {e_k}_{|S_{r'}} = \ph_{|S_{r'}}$ is
absolutely convergent in $\HSrpB$ in general only for $r'<r$.

The Cauchy inequalities also yield

\begin{lemma}
If $0 < r' < r$, then the derivation \wrt~$\th$ induces a bounded linear map
$\pa_\th \col \HSrB \to \HSrpB$, with
\[
\norm{\pa_\th^p\ph}_{\HSrpB} \le \frac{p!}{(r-r')^p} \norm{\ph}_{\HSrB},
\qquad p\in\NN.
\]
\end{lemma}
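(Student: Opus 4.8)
The plan is to deduce the estimate from the classical Cauchy inequalities, applied to $\ph$ regarded as a $\ZZ$-periodic bounded $B$-valued holomorphic function on the strip $\{\,z\in\CC \mid |\IM z|<r\,\}$. The first step is the elementary geometric observation that, for any $\th_0\in S_{r'}$ and any $s$ with $0<s<r-r'$, the closed disk $\{\,\ze \mid |\ze-\th_0|\le s\,\}$, taken in the universal cover~$\CC$, is contained in~$S_r$: indeed, if $|\ze-\th_0|\le s$ then $|\IM\ze|\le|\IM\th_0|+|\ze-\th_0|<r'+s<r$, since $|\IM\th_0|<r'$.

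Next, I would use the fact that a bounded holomorphic $B$-valued function satisfies the Cauchy integral formula (this reduces to the scalar case by composing with continuous linear functionals, or is part of the standard theory of vector-valued holomorphic functions), so that for $\th_0\in S_{r'}$, $p\in\NN$ and $0<s<r-r'$ one may write
\[
\pa_\th^p\ph(\th_0) = \frac{p!}{2\pi\I}\oint_{|\ze-\th_0|=s}\frac{\ph(\ze)}{(\ze-\th_0)^{p+1}}\,\dd\ze ,
\]
whence, bounding the integrand on the circle of radius~$s$,
\[
\norm{\pa_\th^p\ph(\th_0)}_B \le \frac{p!}{2\pi}\cdot\frac{2\pi s}{s^{p+1}}\sup_{|\ze-\th_0|=s}\norm{\ph(\ze)}_B \le \frac{p!}{s^p}\,\norm{\ph}_{\HSrB}.
\]
Taking the supremum over $\th_0\in S_{r'}$ and then the infimum over $s\in(0,r-r')$ yields the announced bound $\norm{\pa_\th^p\ph}_{\HSrpB}\le p!\,(r-r')^{-p}\norm{\ph}_{\HSrB}$.

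Finally I would observe that $\pa_\th^p\ph$ is indeed an element of~$\HSrpB$ — it is holomorphic on~$S_r$ because~$\ph$ is, hence on~$S_{r'}$, and the displayed estimate shows it is bounded there — and that linearity of $\pa_\th^p$ together with the continuity of the induced map $\HSrB\to\HSrpB$ are immediate from the norm inequality. I expect no genuine obstacle here: the only points deserving a word of justification are the geometry of the periodic strip used in the first step and the validity of the Cauchy formula in the Banach-valued setting, both standard. One could instead argue more softly via Lemma~\ref{lemFourier}, writing $\pa_\th^p\ph=\sum_k(2\pi\I k)^p\hat\ph_k e_k$ and using $\norm{\hat\ph_k}_B\le\ee^{-2\pi r|k|}\norm{\ph}_{\HSrB}$ to get convergence in $\HSrpB$ for $r'<r$; but this produces only a constant of the form $\sum_k(2\pi|k|)^p\ee^{-2\pi(r-r')|k|}$ rather than the sharp factor $p!/(r-r')^p$, so the Cauchy-integral argument is the one to use.
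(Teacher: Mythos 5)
Your proof is correct, and it is essentially the argument the paper has in mind: the lemma is stated with the one-line attribution ``The Cauchy inequalities also yield,'' and your Cauchy-integral computation over a disk of radius $s<r-r'$, followed by letting $s\to r-r'$, is precisely the standard way to make that precise (including the reduction to the periodic strip in $\CC$ and the passage to the Banach-valued setting).
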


Applying Lemma~\ref{propcrit} with $\ell= \cF_k$ or $\pa_\th^p$, we get

\begin{corollary}
Let $K$ be a perfect subset of~$\CC$ or~$\PP$, $B$ a complex Banach space, $r>0$
and $\ph\in\Chol(K,\HSrB)$. Then:
\begin{enumerate}
\item For all $k\in \ZZ$, the Fourier coefficients 
$\hat\ph_k \defeq \cF_k \circ \ph$ belong to $\Chol(K,B)$ and satisfy
\begin{equation}\label{eq:c1holfou}
\nor{\hat\ph_k}{K,B} \le  \ee^{-2\pi r|k|} \nor{\ph}{K,\HSrB}.
\end{equation}
\item If $0<r'<r$, then $\pa_\th^p\ph$ belongs to $\Chol(K,\HSrpB)$ and satisfies
\begin{equation}\label{eq:c1holder}
\nor{\pa_\th^p\ph}{K,\HSrpB} \le \frac{p!}{(r-r')^p} \nor{\ph}{K,\HSrB}
\end{equation}
for all $p\in\NN$.
\end{enumerate}
\end{corollary}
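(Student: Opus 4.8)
The plan is to deduce both assertions directly from Lemma~\ref{propcrit}: that lemma says that post-composing a $\gC^1$-holomorphic $B$-valued function with a fixed bounded linear map $\ell$ preserves $\gC^1$-holomorphy and multiplies the $\Chol$-norm by at most the operator norm $\norm{\ell}$. So the whole point is simply to recognize each of $\hat\ph_k$ and $\pth^{\,p}\ph$ as a post-composition $\ell\circ\ph$ with a suitable $\ell$, and to quote the operator-norm bound for $\ell$ from the appropriate preceding lemma.

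For item~(1), I would fix $k\in\ZZ$ and observe that, by the very definition of the Fourier coefficient, $\hat\ph_k(q)=\cF_k\bigl(\ph(q)\bigr)$ for every $q\in K$, \ie $\hat\ph_k=\cF_k\circ\ph$ with $\cF_k\colon\HSrB\to B$. Lemma~\ref{lemFourier} gives $\norm{\cF_k}\le\ee^{-2\pi r|k|}$, so Lemma~\ref{propcrit} immediately yields $\hat\ph_k\in\Chol(K,B)$ together with the estimate~\eqref{eq:c1holfou}. For item~(2), I would likewise take $\ell=\pth^{\,p}\colon\HSrB\to\HSrpB$ for a fixed $r'\in(0,r)$; the Cauchy inequalities (the lemma stated just above the corollary) give $\norm{\pth^{\,p}}\le p!/(r-r')^p$, and Lemma~\ref{propcrit} applied to $\ell\circ\ph=\pth^{\,p}\ph$ produces~\eqref{eq:c1holder}.

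The only bookkeeping point to verify is that Lemma~\ref{propcrit} is available in the generality needed here, namely for $K$ a perfect subset of~$\PP$ and not merely of~$\CC$; but this has already been recorded in the discussion preceding the corollary, since the $\PP$-norm~\eqref{defnorKB} is built chartwise from the $\CC$-norm and the inequality therefore transfers verbatim. I do not expect any genuine obstacle: both parts are a two-line consequence of an already established lemma, and the statement is really just packaging the Fourier-coefficient and $\th$-derivation operations into the $\Chol(K,\cdot)$ formalism so they can be used freely in the later nonlinear scheme.
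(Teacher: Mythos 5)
Your proof is correct and follows exactly the paper's route: the paper proves the corollary by the single line ``Applying Lemma~\ref{propcrit} with $\ell=\cF_k$ or $\pa_\th^p$,'' using the bounds from Lemma~\ref{lemFourier} and the Cauchy-inequalities lemma, which is precisely what you do. Your remark about the validity of Lemma~\ref{propcrit} for perfect subsets of~$\PP$ is also already covered in the paper's discussion after~\eqref{defnorKB}.
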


%%%%%%%%%%%%%%%%%%%%%%%%%%%%%%%%%%%%%%%%%%%

We now consider composition \wrt\ the variable~$\th$:

\begin{lemma}	\label{lemcompos}
Let $K$ be a perfect subset of~$\CC$ or~$\PP$, $B$ a complex Banach algebra,
$r>r'>0$ and $\ka \in (0,1)$. 
Then, for any $\ph\in\Chol(K,\HSrB)$ and $\psi\in\Chol(K,\HSrpB)$ such that
\[ \nor{\psi}{K,\HSrp} \le \ka(r-r'), \]
the series 
\[
\ph\circ(\id+\psi) = \sum_{p\ge0} \frac{1}{p!} (\pa_\th^p\ph) \psi^p 
\]
is absolutely convergent in $\Chol(K,\HSrpB)$ and 
defines a function which satisfies
\[ \nor{\ph\circ(\id+\psi)}{K,\HSrpB} \le (1-\ka)\ii \nor{\ph}{K,\HSrB}. \]
\end{lemma}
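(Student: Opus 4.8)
The plan is to regard the right-hand side $\sum_{p\ge0}\frac1{p!}(\pa_\th^p\ph)\,\psi^p$ as a series in the Banach space $\Chol(K,\HSrpB)$, to bound it term by term, to sum the resulting geometric series, and finally to identify its limit with the composition $\ph\circ(\id+\psi)$ by a Taylor expansion in the $\th$-variable performed fibrewise. The two tools I will use are the Cauchy-type derivative estimate~\eqref{eq:c1holder} (which controls $\pa_\th^p\ph$ at the price of passing from the strip $S_r$ to $S_{r'}$) and the Banach-algebra inequality~\eqref{ineqnorprod} (which controls products in $\Chol(K,\HSrpB)$, this space being a Banach algebra because $B$, hence $\HSrpB$, is one; Lemma~\ref{lemBanachAlg} is valid also for perfect subsets of~$\PP$).

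First I would fix $p\in\NN$ and estimate the $p$-th term. By~\eqref{eq:c1holder}, $\pa_\th^p\ph\in\Chol(K,\HSrpB)$ with $\nor{\pa_\th^p\ph}{K,\HSrpB}\le\frac{p!}{(r-r')^p}\,\nor{\ph}{K,\HSrB}$. By~\eqref{ineqnorprod} applied $p$ times, $\psi^p\in\Chol(K,\HSrpB)$ with $\nor{\psi^p}{K,\HSrpB}\le\nor{\psi}{K,\HSrpB}^p\le(\ka(r-r'))^p$, the hypothesis on~$\psi$ being used in the last inequality (if one prefers to regard $\psi$ as scalar-valued in the $\th$-fibre, one uses instead the $\HSrp$-module structure of $\HSrpB$, for which the computation in the proof of Lemma~\ref{lemBanachAlg} applies verbatim). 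One more application of~\eqref{ineqnorprod} then gives
\[
\nor{\tfrac1{p!}(\pa_\th^p\ph)\,\psi^p}{K,\HSrpB}\le\frac{1}{p!}\cdot\frac{p!}{(r-r')^p}\cdot(\ka(r-r'))^p\,\nor{\ph}{K,\HSrB}=\ka^p\,\nor{\ph}{K,\HSrB}.
\]
Since $0<\ka<1$, summing over $p\ge0$ shows that the series is absolutely convergent in $\Chol(K,\HSrpB)$ and that its sum, call it $g$, satisfies $\nor{g}{K,\HSrpB}\le(1-\ka)\ii\,\nor{\ph}{K,\HSrB}$, which is the asserted bound.

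It then remains to verify that $g=\ph\circ(\id+\psi)$. Because evaluation at a point $q\in K$, followed by evaluation at $\th\in S_{r'}$, followed by evaluation in the $B$-fibre, is a composition of bounded linear maps, it commutes with the absolutely convergent sum; fixing $q$ and writing $h=\ph(q)$, $b=\psi(q)$ (and, when $B=H^\infty(\DD_\rho)$, freezing also the inner variable so as to reduce to scalars), this yields $g(q)(\th)=\sum_{p\ge0}\frac1{p!}\,h^{(p)}(\th)\,b(\th)^p$. By the hypothesis, $|b(\th)|\le\ka(r-r')$; and since $\th\in S_{r'}$ means $|\IM\th|<r'$, the open disk centred at $\th$ of radius $r-|\IM\th|$ is contained in $S_r$ and has radius $>r-r'>\ka(r-r')\ge|b(\th)|$. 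Hence $\th+b(\th)$ lies in that disk, the Taylor series of the holomorphic function $h$ at $\th$ converges there, and its value is $h(\th+b(\th))$. Therefore $g(q)(\th)=h(\th+b(\th))=[\ph\circ(\id+\psi)](q)(\th)$, and the lemma is proved.

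The computation is essentially routine; the only two points that require a little attention are, first, that multiplication by $\psi^p$ has to be carried out \emph{inside} $\Chol(K,\HSrpB)$ with a submultiplicative norm --- which is precisely where the Banach-\emph{algebra} structure of $\HSrpB$ is used --- and, second, that the perturbed map $\id+\psi$ must send $S_{r'}$ into $S_r$, which is guaranteed by the combination of the strip-width loss $r-r'$ with the contraction factor $\ka<1$. Everything else reduces to the geometric-series estimate and to the classical convergence of the Taylor expansion of a bounded holomorphic function on a strip.
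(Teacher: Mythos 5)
Your proof is correct and follows exactly the route the paper sketches: the paper's own proof is the one-liner ``Use the Cauchy inequalities~\eqref{eq:c1holder} and the product inequalities~\eqref{ineqnorprod}'', and you carry out precisely that computation, obtaining the bound $\ka^p\nor{\ph}{K,\HSrB}$ for the $p$-th term and summing the geometric series. The final paragraph, identifying the sum with the literal composition via pointwise Taylor expansion, is a useful extra verification that the paper leaves implicit (the paper defines the composition \emph{by} the series and only later specializes to $B=H^\infty(\DD_\rho)$), and your check that $\th+\psi(q)(\th)$ stays in $S_r$ is the right justification.
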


\begin{proof}
Use the Cauchy inequalities~\eqref{eq:c1holder}
and the product inequalities~\eqref{ineqnorprod}.
\end{proof}

We shall use Lemma~\ref{lemcompos} with $B=H^\infty(\DD_\rho)$, 
in which case
\[
\big( \ph\circ(\id+\psi) \big)(q)(\th,\eps) = 
\ph(q)\big(\th+\psi(q)(\th,\eps),\eps\big)
\]
if we use the identification $\Chol(K,\HSrB) = \Chol\big(K,H^\infty(S_r\times\DD_\rho)\big)$.

%%%%%%%%%%%%%%%%%%%%%%%%%%%%%%%%%%%%%%%%%%%
%%%%%%%%%%%%%%%%%%%%%%%%%%%%%%%%%%%%%%%%%%%

\section{$\gC^1$-holomorphy of the solution of the cohomological equation}

\label{seccohom}

%%%%%%%%%%%%%%%%%%%%%%%%%%%%%%%%%%%%%%%%%%%
%%%%%%%%%%%%%%%%%%%%%%%%%%%%%%%%%%%%%%%%%%%

For $\om\in\RR-\QQ$ and $\ph$ analytic on~$\TT$ of zero mean value, 
the ``cohomological equation'' is the linear equation
\begin{equation}	\label{eqlinnom}
\psi(\th+\om) - \psi(\th) = \ph(\th)
\end{equation}
which will appear in Levi-Moser's scheme in Section~\ref{secLMscheme}.
Its solution is formally given by the Fourier series
\begin{equation}	\label{eqsollinnom}
\psi = \sum_{k\in\ZZ^*} \frac{1}{\ee^{2\pi\I k\om}-1} \hat\ph_k e_k
= \sum_{k\in\ZZ^*} \la_k(q) \hat\ph_k e_k,
\qquad q = \ee^{2\pi\I\om}.
\end{equation}
As is well known, this defines an analytic function~$\psi$ when $\om\in
A_M^\RR$
(use \eg \eqref{ineqlak} with $q$ on the unit circle).
We shall see that this is still true when one considers the complexified
equation associated with $q\in K_M$, 
\ie $\om\in A_M^\CC$ or $\om=\pm\I\infty$, 
and that the solution~$\psi$ depends $\gC^1$-holomorphically on~$q$ when~$\ph$
does.

%%%%%%%%%%%%%%%%%%%%%%%%%%%%%%%%%%%%%%%%%%%

Notice however that, if $\IM\om\neq0$, equation~\eqref{eqlinnom} requires that
the unknown~$\psi$ be defined in a strip which is larger than the strip
where~$\ph$ admits a holomorphic extension.
Suppose for instance $\IM\om>0$ and $\ph$ holomorphic in~$S_r$; then we seek a
solution~$\psi$ holomorphic for all $\th\in\CC/\ZZ$ such that
$-r < \IM\th < r+\IM\om$,
so that equation~\eqref{eqlinnom} makes sense for all $\th\in S_r$;
it turns out that~\eqref{eqsollinnom} defines such a function.
This will be part of Proposition~\ref{proplinlemma}; before stating it, we
introduce a notation for the functional spaces we shall systematically deal with
from now on.

%%%%%%%%%%%%%%%%%%%%%%%%%%%%%%%%%%%%%%%%%%%

\begin{notation}
We fix $\tau>0$, $M>2\ze(1+\tau)$ and $\rho>0$ and define~$K_M$ as in~\eqref{eqdefKM}
and 
\begin{equation}	\label{eqdefB}
\Br \defeq H^\infty(\DD_\rho). 
\end{equation}
For any $r>0$, as in \eqref{eqdefSRDDrho}--\eqref{eqdefBBRrho} we define
$\BB_{r,\rho} = H^\infty(S_r\times\DD_\rho)$, which is thus is canonically
isomorphic to $\HSrBr$, and we set
\begin{equation}	\label{eqdefCR}
\Cn r \defeq \Chol(K_M,\BB_{r,\rho}) = \Chol\big( K_M, \HSrBr \big).
\end{equation}
For any $\ph\in \Cn r$, we also use the notation
$\norm{\ph}_r \defeq \nor{\ph}{K_M,\BB_{r,\rho}}$ and
\[
\ph(q,\th) \defeq \ph(q)(\th) \in \Br,
\qquad q\in K_M, \; \th\in S_r,
\]
as well as $\ph(q)(\th,\cdot)$ or $\ph(q,\th,\cdot)$.
\end{notation}

%%%%%%%%%%%%%%%%%%%%%%%%%%%%%%%%%%%%%%%%%%%%%%%%%%%

\begin{definition}	\label{defCRpCRm}
For $r>0$, we define~$\Cp r$, \resp $\Cm r$, to be the space of all functions
$\ph = \sum_{k\in \ZZ} \hat{\ph}_k e_k \in \Cn r$ such that each function
\begin{align*}
& \hspace{3em} q^{k} \hat{\ph}_k(q),
\hspace{-3em} & &\text{\resp} & \hspace{-4em}
& \hspace{3em} q^{-k} \hat{\ph}_k(q) \\
\intertext{has limits for $q\to0$ and $q\to\infty$, and the series}
\ph^+(q,\th) &= \sum_{k\in \ZZ} q^{k} \hat{\ph}_k(q) e_k(\th),
\hspace{-3em} & &\text{\resp} & \hspace{-4em}
\ph^-(q,\th) &= \sum_{k\in \ZZ} q^{-k} \hat{\ph}_k(q) e_k(\th), \\
\intertext{%
converges in~$\Br$ for all $(q,\th)\in K_M\times S_r$ and defines an element
of~$\Cn r$.
For any such function, we define}
\nop{\ph}{r} &\defeq \max \big( \norm{\ph}_r, \norm{\ph^+}_r \big),
\hspace{-3em} & &\text{\resp} & \hspace{-4em}
\nom{\ph}{r} &\defeq \max \big( \norm{\ph}_r, \norm{\ph^-}_r \big),\\[1ex]
\na\ph &\defeq \ph^+ - \ph \in \Cn r,
\hspace{-3em} & &\text{\resp} & \hspace{-4em}
\na^-\ph &\defeq \ph - \ph^- \in \Cn r.
\end{align*}
We also set
\[
\Cpm{r} \defeq \Cp r\cap \Cm r , \quad
\nopm{\ph}{r} \defeq \max \big( \nop{\ph}{r}, \nom{\ph}{r} \big).
\]
\end{definition}

%%%%%%%%%%%%%%%%%%%%%%%%%%%%%%%%%%%%%%%%%%%%%%%%%%%

It is easy to check that $\big( \Cp r, \nop{\cdot}{r} \big)$, 
$\big( \Cm r, \nom{\cdot}{r} \big)$ and
$\big( \Cpm{r}, \nopm{\cdot}{r} \big)$ are Banach spaces.
Notice that, for $q=\ee^{2\pi\I\om}\in K_M - \{0,\infty\}$,
\begin{equation}	\label{eqphpphm}
\ph^+(q,\th) = \ph(q,\th+\om), \qquad \qquad \ph^-(q,\th) = \ph(q,\th-\om)
\end{equation}
and, if $\IM\om>0$ [\resp if $\IM\om<0$],
\begin{align*}
\ph\in \Cp r &\quad\Rightarrow\quad \ph(q) \in
H^\infty(S_{-r,r+\IM\om},\Br) \quad \big[ \text{\resp}\; H^\infty(S_{-r+\IM\om,r},\Br) \big]\\
\ph\in \Cm r &\quad\Rightarrow\quad \ph(q) \in
H^\infty(S_{-r-\IM\om,r},\Br) \quad \big[ \text{\resp}\; H^\infty(S_{-r,r-\IM\om},\Br) \big]
\end{align*}
with the notation
$S_{h_1,h_2}= \{\, \th \in \CC \mid h_1<\IM (\th) <h_2 \,\}$.

Thus, the cohomological equation~\eqref{eqlinnom} can be rephrased as
$\na\psi=\ph$, where $\ph$ is given in~$\Cn r$ and~$\psi$ is sought in~$\Cp r$.
We shall also need to deal with the shifted equation
$\na^-\psi=\ph$ for which~$\psi$ is sought in~$\Cm r$.
But this is asking too much: we'll have to content ourselves with $\psi\in \Cp{r'}$, \resp
$\Cm{r'}$, with any $r'<r$.

%%%%%%%%%%%%%%%%%%%%%%%%%%%%%%%%%%%%%%%%%%%%%%%%%%%
%%%%%%%%%%%%%%%%%%%%%%%%%%%%%%%%%%%%%%%%%%%%%%%%%%%

\begin{proposition}	\label{proplinlemma}
Suppose that $0 < r' < r$ and 
$\ph = \sum_{k\in\ZZ} \hat\ph_k e_k \in \Cn r$.
Then the formulas
\begin{equation}	\label{eqdefGaph}
(\Ga\ph)(q) = \sum_{k\in\ZZ^*} \la_k(q) \hat\ph_k(q) e_k, \qquad
(\Ga^-\ph)(q) = - \sum_{k\in\ZZ^*} \la_{-k}(q) \hat\ph_k(q) e_k
\end{equation}
(still with $\la_k(q) = \frac{1}{q^k -1}$)
define two functions $\Ga\ph \in \Cp{r'}$ and $\Ga^-\ph \in \Cm{r'}$, which
satisfy
\begin{gather}
\label{eqGaphsolcoh}
\na(\Ga\ph) = \na^-(\Ga^-\ph) = \ph - \hat\ph_0 \quad\text{in $\Cn{r'}$}. \\[-2.4ex]
\intertext{Moreover,}
\label{eqGaphpGaphm}
(\Ga \ph)^+ = \Ga^- \ph, \quad
(\Ga^- \ph)^- = \Ga\ph \\[-.5ex]
\intertext{%
and there exists a positive constant~$C_1$ which depends only
on~$\tau$ such that, if $r-r' \le 1$, then
}
\label{ineqGaph}
\nop{\Ga\ph}{r'},\, \nom{\Ga^-\ph}{r'} \le 
\frac{C_1 M^2}{(r-r')^{\sig}} \norm{\ph}_r, \\
\label{ineqpaGaph}
\nop{\pa_\th\Ga\ph}{r'},\, \nom{\pa_\th\Ga^-\ph}{r'} \le 
\frac{C_1 M^2}{(r-r')^{\sig+1}} \norm{\ph}_r,
\end{gather}
where  $\sig\defeq 4 + 2\tau$.
\end{proposition}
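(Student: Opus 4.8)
The plan is to define $\Ga\ph$ and $\Ga^-\ph$ by the Fourier series~\eqref{eqdefGaph} term by term, to estimate each term in $\Cn{r'}$, and to sum. Everything rests on two elementary identities for $\la_k(q)=\frac{1}{q^k-1}$, valid for all $k\in\ZZ^*$,
\[ q^k\la_k(q) = -\la_{-k}(q), \qquad (q^k-1)\,\la_k(q) = 1, \]
together with three facts already at our disposal: the growth bound $\nor{\la_k}{K_M}\le 7M^2|k|^{3+2\tau}$ of Proposition~\ref{lemelbrick}, the Fourier decay $\nor{\hat\ph_k}{K_M,\Br}\le\ee^{-2\pi r|k|}\norm{\ph}_r$ of~\eqref{eq:c1holfou}, and the fact that $\Chol(K_M,\cdot)$ is a Banach algebra over $\Chol(K_M)$ (Lemmas~\ref{propcrit} and~\ref{lemBanachAlg}).

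First the key estimate. Fix $k\in\ZZ^*$. Multiplication by the fixed exponential $e_k$ is a bounded linear map $\Br\to\BB_{r',\rho}$ of norm $\sup_{S_{r'}}|e_k|=\ee^{2\pi r'|k|}$, so Lemma~\ref{propcrit} puts $q\mapsto\hat\ph_k(q)\,e_k$ in $\Cn{r'}$ with $\Chol$-norm at most $\ee^{2\pi r'|k|}\nor{\hat\ph_k}{K_M,\Br}\le\ee^{-2\pi(r-r')|k|}\norm{\ph}_r$; multiplying by $\la_k\in\Chol(K_M)$ and applying Lemma~\ref{lemBanachAlg}(i) and Proposition~\ref{lemelbrick},
\[ \norm{\la_k\,\hat\ph_k\,e_k}_{r'} \le 7M^2|k|^{3+2\tau}\,\ee^{-2\pi(r-r')|k|}\,\norm{\ph}_r . \]
Summing over $k\in\ZZ^*$ and invoking the elementary bound $\sum_{k\ge1}k^{s}\ee^{-ak}\le C(s)\,a^{-(s+1)}$, valid for all $a>0$ (e.g. from $k^{s}\ee^{-ak/2}\le(2s/(\ee a))^{s}$ and a geometric series), with $s=3+2\tau$ and $a=2\pi(r-r')$, shows that the series~\eqref{eqdefGaph} converges absolutely in the Banach space $\Cn{r'}$ and that $\norm{\Ga\ph}_{r'}\le C_1 M^2(r-r')^{-\sig}\norm{\ph}_r$ with $\sig=4+2\tau$ and $C_1=C_1(\tau)$, provided $r-r'\le1$; the same applies to $\Ga^-\ph$ since $\nor{\la_{-k}}{K_M}$ satisfies the identical bound.

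It remains to identify the ``$\pm$''-structure and the cohomological equation. Using $q^k\la_k(q)=-\la_{-k}(q)$, the series defining $(\Ga\ph)^+$ in Definition~\ref{defCRpCRm}, namely $\sum_{k\in\ZZ^*}q^k\la_k(q)\hat\ph_k(q)e_k$, equals $-\sum_{k\in\ZZ^*}\la_{-k}(q)\hat\ph_k(q)e_k=\Ga^-\ph$; by the estimate above this converges in $\Cn{r'}$, and its Fourier coefficients $q^k\la_k(q)\hat\ph_k(q)=-\la_{-k}(q)\hat\ph_k(q)$, being products of functions continuous on the compact set $K_M\subset\PP$, have limits at $0$ and $\infty$. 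Hence $\Ga\ph\in\Cp{r'}$ with $(\Ga\ph)^+=\Ga^-\ph$, and $\nop{\Ga\ph}{r'}$ obeys~\eqref{ineqGaph}; symmetrically $\Ga^-\ph\in\Cm{r'}$ with $(\Ga^-\ph)^-=\Ga\ph$, which proves~\eqref{eqGaphpGaphm}. Then $\na(\Ga\ph)=(\Ga\ph)^+-\Ga\ph$ has $k$-th Fourier coefficient $\bigl(-\la_{-k}(q)-\la_k(q)\bigr)\hat\ph_k(q)=(q^k-1)\la_k(q)\hat\ph_k(q)=\hat\ph_k(q)$ for $k\ne0$ and $0$ for $k=0$, so $\na(\Ga\ph)=\ph-\hat\ph_0$, and the computation for $\na^-(\Ga^-\ph)$ is identical, giving~\eqref{eqGaphsolcoh}. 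Finally~\eqref{ineqpaGaph} follows by repeating the summation after applying $\pa_\th$, which multiplies the $k$-th term by $2\pi\I k$: one replaces $s=3+2\tau$ by $s=4+2\tau$, hence $(r-r')^{-\sig}$ by $(r-r')^{-(\sig+1)}$, and $(\pa_\th\Ga\ph)^+=\pa_\th\Ga^-\ph$ carries the same bound.

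The proof is essentially bookkeeping; the one point of substance is the power counting, which also explains why one must lose width, i.e.\ take $r'<r$. The polynomial factor $|k|^{3+2\tau}$ coming from $\nor{\la_k}{K_M}$ is absorbed only by the genuine exponential gain $\ee^{-2\pi(r-r')|k|}$, available precisely because $\ph$ and the functions $e_k$ (of norm $\ee^{2\pi r|k|}$ at width $r$) live at width $r$ while the solution is sought at the strictly smaller width $r'$. The resulting exponent is $\sig=4+2\tau=(3+2\tau)+1$, the extra $1$ being what is gained in bounding $\sum_{k\ge1}k^{3+2\tau}\ee^{-ak}$ by a multiple of $a^{-(4+2\tau)}$; one further $\th$-derivative costs one more power, giving $\sig+1$.
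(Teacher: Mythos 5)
Your proof is correct and follows the same route as the paper: term-by-term bounds combining Proposition~\ref{lemelbrick}, the Fourier decay~\eqref{eq:c1holfou}, the algebra structure from Lemmas~\ref{propcrit} and~\ref{lemBanachAlg}, then a bound on $\sum_{k\ge1}k^\beta\ee^{-\alpha k}$, and finally the algebraic identities $q^k\la_k=-\la_{-k}$ and $(q^k-1)\la_k=1$ for~\eqref{eqGaphpGaphm} and~\eqref{eqGaphsolcoh}. The only cosmetic difference is your summation lemma $\sum k^s\ee^{-ak}\le C(s)a^{-(s+1)}$ obtained by the $\ee^{-ak/2}$-splitting trick, which in fact makes the hypothesis $r-r'\le1$ superfluous, whereas the paper's integral-comparison bound $\Sig(\alpha,\beta)<\beta!\alpha^{-\beta-1}+(\beta/\ee)^\beta\alpha^{-\beta}$ needs $\alpha\le2\pi$ to absorb the lower-order term.
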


%%%%%%%%%%%%%%%%%%%%%%%%%%%%%%%%%%%%%%%%%%%%%%%%%%%
%%%%%%%%%%%%%%%%%%%%%%%%%%%%%%%%%%%%%%%%%%%%%%%%%%%

\begin{proof}
For $k\in\ZZ^*$,
by virtue of~\eqref{ineqnorlak} and~\eqref{eq:c1holfou}, the fact that 
$\norm{e_k}_{H^\infty(S_{r'})} = \ee^{2\pi|k| r'}$ and
$\norm{\pth e_k}_{H^\infty(S_{r'})} = 2\pi |k| \ee^{2\pi|k| r'}$
implies
\begin{gather*}
\norm{\la_k \hat\ph_k e_k}_{r'}, \;
\norm{\la_{-k} \hat\ph_k e_k}_{r'} 
\le 7 M^2 |k|^{3+2\tau} \ee^{-2\pi(r-r')|k|} \norm{\ph}_r, \\
\norm{\la_k \hat\ph_k \pth e_k}_{r'}, \;
\norm{\la_{-k} \hat\ph_k \pth e_k}_{r'} 
\le 7 M^2 (2\pi) |k|^{4+2\tau} \ee^{-2\pi(r-r')|k|} \norm{\ph}_r.
\end{gather*}
The series in~\eqref{eqdefGaph} can thus be viewed as an absolutely
convergent series in~$\Cn{r'}$, defining functions $\Ga\ph,\Ga^-\ph$ which satisfy
\begin{gather*}
\norm{\Ga\ph}_{r'}, \; \norm{\Ga^-\ph}_{r'} 
\le 14 M^2 \Sig(\al,3+2\tau) \norm{\ph}_r, \\
\norm{\pth\Ga\ph}_{r'}, \; \norm{\pth\Ga^-\ph}_{r'} 
\le 14 M^2 (2\pi) \Sig(\al,4+2\tau) \norm{\ph}_r, 
\\[-1ex]
\intertext{where}
\Sig(\al,\be) = \sum_{k\ge1} k^\be \ee^{-\al k}, \qquad
\al = 2\pi(r-r'), \qquad \be > 1.
\end{gather*}
If $r-r'\le1$, then $\al\le 2\pi$.
Since $x\mapsto x^\be \ee^{-\al x}$ is increasing on $(0,\frac{\be}{\al})$,
decreasing on $(\frac{\be}{\al},+\infty)$ and bounded by 
$\left( \frac{\be}{\ee} \right)^\be \al^{-\be}$, 
with $\int_0^\infty x^\be \ee^{-\al x} \,\dd x = \be! \al^{-\be-1}$,
we have
\begin{gather*}
\Sig(\al,\be) %\sum_{k\ge0} k^\be \ee^{-\al k} 
< \be! \al^{-\be-1} + 
\left( \frac{\be}{\ee} \right)^\be \al^{-\be}
\le \left[ \be! + 2\pi\left( \frac{\be}{\ee} \right)^\be \right] 
\big(2\pi(r-r')\big)^{-\be-1}, \\[-2.4ex]
\intertext{hence}
\norm{\Ga\ph}_{r'} , \; \norm{\Ga^-\ph}_{r'} 
\le \frac{C_1 M^2}{(r-r')^\sig} \norm{\ph}_r, \qquad
\norm{\pth\Ga\ph}_{r'} , \; \norm{\pth\Ga^-\ph}_{r'} 
\le \frac{C_1 M^2}{(r-r')^{\sig+1}} \norm{\ph}_r,
\end{gather*}
with $\sig = 4 + 2\tau$ and
$C_1 \defeq 14 \left[ \sig! + 2\pi\left( \frac{\sig}{\ee} \right)^\sig \right] 
(2\pi)^{-\sig}$.

We now observe that, for each $k\in\ZZ^*$, ${q^k}\la_k(q) = \frac{q^k}{q^k-1} =
-\la_{-k}(q)$ defines an element of $\Chol(K_M,\Br)$, thus the Fourier series
$(\Ga\ph)^+(q) = \sum {q^k}\la_k(q) \hat\ph_k(q) e_k$
coincides with $(\Ga^-\ph)(q)$
and defines an element of~$\Cn{r'}$, hence $\Ga\ph \in \Cp{r'}$ with
$\nop{\Ga\ph}{r'} \le {C_1 M^2}{(r-r')^{-\sig}} \norm{\ph}_r$
and $\nop{\pth\Ga\ph}{r'} \le {C_1 M^2}{(r-r')^{-\sig-1}} \norm{\ph}_r$.
Similarly, $(\Ga^-\ph)^- = \Ga\ph$ (hence~\eqref{eqGaphpGaphm}),
$\Ga^-\ph \in \Cm{r'}$ with
$\nom{\Ga^-\ph}{r'} \le {C_1 M^2}{(r-r')^{-\sig}} \norm{\ph}_r$,
$\nom{\pth\Ga^-\ph}{r'} \le {C_1 M^2}{(r-r')^{-\sig-1}} \norm{\ph}_r$.

The identities~\eqref{eqGaphsolcoh} stem from the relations 
$(q^k-1)\la_k(q) = -(1-q^{-k})\la_{-k}(q) = 1$ valid for all $k\in\ZZ^*$.

% We conclude by checking the inequalities~\eqref{ineqpaGaph}.
% %
% Let $R'' = \frac{R'+R}{2}$, so that $R' < R'' < R$ with $R''-R' = R-R'' = \frac{R-R'}{2}$.
% %
% The Cauchy inequalities~\eqref{eq:c1holder} applied to~$\Ga\ph$ and~$(\Ga\ph)^+$ yield
% %
% \[ \nop{\pa_\th \Ga\ph}{R'} \le \frac{1}{R''-R'} \nop{\Ga\ph}{R''}
% %
% \le \frac{C_1 M^2}{2^{\sig+1}(R''-R')(R-R'')^\sig} \norm{\ph}_R \]
% %
% and similarly
% %
% $\nom{\pa_\th \Ga^-\ph}{R'} 
% %
% \le \dfrac{C_1 M^2}{2^{\sig+1}(R''-R')(R-R'')^\sig} \norm{\ph}_R$,
% %
% which are the desired bounds.
%
\end{proof}

%%%%%%%%%%%%%%%%%%%%%%%%%%%%%%%%%%%%%%%%%%%%%%%%%%%
%%%%%%%%%%%%%%%%%%%%%%%%%%%%%%%%%%%%%%%%%%%%%%%%%%%

\begin{rem}	\label{remker}
Observe that the kernels of~$\na$ or~$\na^-$ do not depend on~$r$ and consist of
the functions which are constant in~$\th$, \ie they coincide with
\begin{equation*}
\Cr \defeq \Chol(K_M,\Br).
\end{equation*}
\end{rem}

%%%%%%%%%%%%%%%%%%%%%%%%%%%%%%%%%%%%%%%%%%%%%%%%%%%
%%%%%%%%%%%%%%%%%%%%%%%%%%%%%%%%%%%%%%%%%%%%%%%%%%%

\begin{rem}	\label{remGaGamphm}
Observe that
\[
\ph \in \Cm r \ens\Rightarrow\ens
\Ga\ph = \Ga^-(\ph^-) \in \Cpm{r'}
\]
(because $\la_k(q) = - \la_{-k}(q) q^{-k}$ for each $k\in\ZZ^*$),
with
\begin{equation*}
\nopm{\Ga\ph}{r'} \le 
\frac{C_1 M^2}{(r-r')^{\sig}} \nom{\ph}{r}, \qquad
\nopm{\pa_\th\Ga\ph}{r'} \le
\frac{C_1 M^2}{(r-r')^{\sig+1}} \nom{\ph}{r}.
\end{equation*}
\end{rem}

%%%%%%%%%%%%%%%%%%%%%%%%%%%%%%%%%%%%%%%%%%%%%%%%%%%
%%%%%%%%%%%%%%%%%%%%%%%%%%%%%%%%%%%%%%%%%%%%%%%%%%%

Lastly, as we shall have to keep track of real analyticity \wrt\ the variables
$\om,\th,\eps$, we introduce
\begin{definition}
We denote by $\Cnr{r}$ the subspace of~$\Cn{r}$ consisting of all
functions~$\ph$ whose Fourier coefficients satisfy
\[
\conj\big( \hat\ph_k(q) \big) = \hat\ph_{-k}(1/\bar q),
\qquad k\in\ZZ,\; q\in K_M,
\]
where $\conj$ denotes complex conjugacy in~$\Br$
(\ie $(\conj\psi)(\eps) \defeq \ov{\psi(\bar\eps)}$ for any $\psi\in\Br$).
We also set 
\[
\Cpr r \defeq \Cp r \cap \Cnr r, \quad
\Cmr r \defeq \Cm r \cap \Cnr r, \quad
\Cpmr r \defeq \Cpm r \cap \Cnr r
\]
and $\Crr \defeq \{\, \ph\in\Cr \mid 
\conj\big(\ph(q)\big) = \ph(1/\bar q) \;\text{for all $q\in K_M$}\,\}.$
\end{definition}

The functions in these spaces have the property
\[
|q|=1,\; \th\in\RR/\ZZ,\; \eps\in\RR \ens\Rightarrow\ens \ph(q)(\th,\eps)\in\RR.
\]
Since $\conj\big( \la_k(q) \big) = \la_{-k}(1/\bar q)$, we see that the
operators~$\Ga$ and~$\Ga^-$ preserve real analyticity, so that they induce
operators
\[
\Ga \col \Cnr r \to \Cpr{r'}, \quad
\Ga^- \col \Cnr r \to \Cmr{r'}.
\]

%%%%%%%%%%%%%%%%%%%%%%%%%%%%%%%%%%%%%%%%%%%%%%%%%
%%%%%%%%%%%%%%%%%%%%%%%%%%%%%%%%%%%%%%%%%%%%%%%%%

\section{Levi-Moser's modified Newton scheme}	\label{secLMscheme}

%%%%%%%%%%%%%%%%%%%%%%%%%%%%%%%%%%%%%%%%%%%%%%%%%
%%%%%%%%%%%%%%%%%%%%%%%%%%%%%%%%%%%%%%%%%%%%%%%%%

\subsection{Reduction of the problem}	\label{sec:reduc}

%%%%%%%%%%%%%%%%%%%%%%%%%%%%%%%%%%%%%%%%%%%%%%%%%
%%%%%%%%%%%%%%%%%%%%%%%%%%%%%%%%%%%%%%%%%%%%%%%%%

Suppose $f \in H^\infty(S_{R_0})$ real analytic with zero mean value,
with $f''$ bounded in~$S_{R_0}$ and $0<R<R_0$.
We shall see that this is enough to prove Theorem~\ref{thmtiuCunhol}.

In view of Definition~\ref{defCRpCRm} and formulas~\eqref{eqphpphm}, we can
define the operator
\[
\De = \na - \na^- = \na \na^- = \na^- \na, \qquad
\De \col \Cpm{r} \to \Cn r, \qquad r>0,
\]
so that, for $q=\ee^{2\pi\I\om}\in K_M - \{0,\infty\}$ and $\th\in S_r$,
\begin{equation*}	%\label{eqDeufcompos}
(\De u)(q,\th) = u(q,\th+\om) - 2 u(q,\th) + u(q,\th-\om) \in \Br
\end{equation*}
appears as a complexification of the \lhs\ of equation~\eqref{eqseconddiffuom}.
Therefore, the equation
\begin{equation}	\label{eqDeufcompos}
\De u = \eps f \circ (\id+u)
\end{equation}
boils down to equation~\eqref{eqseconddiffuom} when $q=\ee^{2\pi\I\om}$ is on the unit circle and
$\th$ and~$\eps$ are real.

%%%%%%AAAAAAAAAAAAAAAAA%%%%%%%%%%%%%%%%%%%%%%%%%%%%%%%%%%%%%%%%%%%

\begin{proposition}
The \rhs\ of~\eqref{eqDeufcompos} makes sense for any $r,\rho>0$ and $u\in \Cpm{r}$
such that $\norm{u}_r < R_0-r$.

Let $r'\in(0,r)$.
If a function $u\in\Cpmr r$ is a solution of~\eqref{eqDeufcompos} which
satisfies 
$\norm{u}_r < \min(R_0-r,r)$ and $\norm{u}_{r'} < r-r'$,
then the formula
\[ \ti u_M(q,\th) \defeq u\big(q,\th-\hat u_0(q)\big) - \hat u_0(q) \]
defines a function $\ti u_M \in \gC^1(K_M,\BB_{r',\rho})$
such that, for each $\om\in A_M^\RR$ and $\eps\in (-\rho,\rho)$, the function
$\th\in\TT \mapsto \ti u_M(\ee^{2\pi\I\om},\th,\eps)$ has zero mean value and
parametrizes through~\eqref{eqgaintermsofuv}--\eqref{eqvintermsofuom} an \ig\ of
frequency~$\om$ for~$T_\eps$.
\end{proposition}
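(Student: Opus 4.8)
\emph{Overview.} The statement is a pure reduction lemma: it asserts nothing about the \emph{existence} of a solution $u$ of \eqref{eqDeufcompos} (that is the job of the Newton scheme that follows) but only repackages a given solution into the normalized parametrization $\ti u_M$. My plan is therefore to prove the two assertions by direct manipulation in the Banach algebra of Section~\ref{secCunhol}, using only the composition estimate of Lemma~\ref{lemcompos} and the Cauchy estimates \eqref{eq:c1holfou}--\eqref{eq:c1holder}. For the first assertion I regard $f$ as the element of $\Chol\big(K_M,H^\infty(S_{R_0},\Br)\big)$ which is constant in $q$ and $\eps$, and apply Lemma~\ref{lemcompos} with $\ph=f$, $\psi=u$ and the strips $S_{R_0}\supset S_r$: the hypothesis $\norm{u}_r<R_0-r$ is exactly what the lemma needs (with $\ka<1$ close to $1$) to make $f\circ(\id+u)=\sum_{p\ge0}\frac{1}{p!}(\pa_\th^pf)\,u^p$ converge in $\Cn r$, with $(f\circ(\id+u))(q)(\th,\eps)=f\big(\th+u(q)(\th,\eps)\big)$; multiplying by $\eps$ (a bounded operation of norm $\rho$, via Lemma~\ref{propcrit}) gives $\eps f\circ(\id+u)\in\Cn r$, so \eqref{eqDeufcompos} is an equation within $\Cn r$, and within $\Cnr r$ when $u\in\Cpmr r$ and $f$ is real analytic, since composition and multiplication by $\eps$ preserve the conjugation symmetry.

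\emph{Regularity of $\ti u_M$.} Now suppose $u\in\Cpmr r$ solves \eqref{eqDeufcompos} with $\norm{u}_r<\min(R_0-r,r)$ and $\norm{u}_{r'}<r-r'$. I set $c\defeq\hat u_0=\cF_0\circ u\in\Cr=\Chol(K_M,\Br)$, which is constant in $\th$; by \eqref{eq:c1holfou} with $k=0$ one has $\nor{c}{K_M,\Br}\le\norm{u}_{r'}<r-r'$. A second application of Lemma~\ref{lemcompos}, this time with $\ph=u\in\Cn r$ and $\psi=-c$ (constant in $\th$, hence an element of $\Chol\big(K_M,H^\infty(S_{r'},\Br)\big)$ of norm $<r-r'$), shows that $u\circ(\id-c)\in\Cn{r'}$. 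Subtracting $c\in\Cr\subset\Cn{r'}$ we obtain
\[
\ti u_M=u\circ(\id-c)-c\in\Cn{r'}=\Chol(K_M,\BB_{r',\rho})\subset\gC^1(K_M,\BB_{r',\rho}),
\]
and $\ti u_M\in\Cnr{r'}$, again by conjugation symmetry. Thus we in fact get $\gC^1$-holomorphy, not merely real $\gC^1$.

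\emph{Dynamical meaning.} It remains to check that for $\om\in A_M^\RR$ and $\eps\in(-\rho,\rho)$ the function $w\defeq\ti u_M(\ee^{2\pi\I\om})(\cdot,\eps)$ parametrizes an \ig\ of frequency~$\om$. Write $q=\ee^{2\pi\I\om}\in K_M\cap\SS$. By \eqref{eqphpphm}, $(\na u)(q,\th)=u(q,\th+\om)$ and $(\na^-u)(q,\th)=u(q,\th-\om)$, so $(\De u)(q,\th)=u(q,\th+\om)-2u(q,\th)+u(q,\th-\om)$ and, $f$ being independent of $q$ and $\eps$, equation \eqref{eqDeufcompos} evaluated at $(q,\th,\eps)$ is precisely \eqref{eqseconddiffuom} for the real-valued (because $u\in\Cnr r$) function $\th\mapsto u(q,\th,\eps)$. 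Hence $c=\hat u_0(q)(\eps)=\int_0^1u(q,\th,\eps)\,\dd\th\in\RR$, and $w(\th)=u(q,\th-c,\eps)-c$. By translation invariance of $\int_0^1(\cdot)\,\dd\th$ the function $w$ has zero mean value, and substituting $\th\mapsto\th-c$ in \eqref{eqseconddiffuom}, using $u(q,\th-c,\eps)=w(\th)+c$, yields $w(\th+\om)-2w(\th)+w(\th-\om)=\eps f\big(\th+w(\th)\big)$, i.e. $w$ again solves \eqref{eqseconddiffuom}. Finally $\id+w=\big(\id+u(q,\cdot,\eps)\big)\circ(\cdot-c)$, so $\id+w$ is injective on $\RR$ as soon as $\id+u(q,\cdot,\eps)$ is; a Cauchy estimate \eqref{eq:c1holder} for $\pa_\th u$ on a strip $S_{r''}\subset S_{r'}$, combined with $\norm{u}_{r'}<r-r'$, gives $\sup_\TT|\pa_\th u(q,\cdot,\eps)|<1$ and hence the injectivity. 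By the normalization discussion of Section~\ref{secigs}, a zero-mean $1$-periodic solution of \eqref{eqseconddiffuom} with $\id+w$ injective parametrizes, through \eqref{eqgaintermsofuv}--\eqref{eqvintermsofuom}, an \ig\ of frequency~$\om$ for~$T_\eps$; this is the asserted property of $\th\mapsto\ti u_M(q)(\th,\eps)$.

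\emph{Expected obstacle.} Everything above is bookkeeping except one point: turning the hypotheses $\norm{u}_r<\min(R_0-r,r)$ and $\norm{u}_{r'}<r-r'$ into the injectivity of $\id+u$ on the real axis via the Cauchy estimate. This is the only place where the conclusion is genuinely dynamical rather than formal, and it is also where one must be careful about the interplay of the strip-widths $R_0>r>r'>r''$; the remainder is routine manipulation in $\Cn r$.
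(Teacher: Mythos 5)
Your proof follows the paper's route exactly (Lemma~\ref{lemcompos} for the well-definedness of the composition and for the regularity of $\ti u_M = u\circ(\id-\hat u_0)-\hat u_0$, then Cauchy estimates for the injectivity of $\id+u$ on~$\RR$), and your remark that one in fact obtains $\Chol(K_M,\BB_{r',\rho})$, not merely real $\gC^1$, is correct. There is one slip, precisely at the spot you flag as the only non-formal step: for the injectivity you invoke a Cauchy estimate on some $S_{r''}\subset S_{r'}$ together with the hypothesis $\norm{u}_{r'}<r-r'$, but this does not yield $\sup_\TT|\pa_\th u(q,\cdot,\eps)|<1$ --- the Cauchy inequality~\eqref{eq:c1holder} from $S_{r'}$ to $S_{r''}$ bounds $\sup|\pa_\th u|$ by $\frac{1}{r'-r''}\norm{u}_{r'}$, which has nothing to do with $r-r'$ and can be large when $r'$ is small. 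The hypothesis that actually does the job is the \emph{other} half of $\norm{u}_r<\min(R_0-r,r)$, namely $\norm{u}_r<r$: the Cauchy estimate from $S_r$ to the real circle gives $\sup_\TT|\pa_\th u(q,\cdot,\eps)|\le\frac1r\norm{u}_r<1$, which is what the paper uses. The bound $\norm{u}_{r'}<r-r'$ serves only the purpose you identified earlier, namely making Lemma~\ref{lemcompos} applicable to $u\circ(\id-\hat u_0)$. Once this substitution is made, your argument is complete and coincides with the paper's.
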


\begin{proof}
The \rhs\ of~\eqref{eqDeufcompos} makes sense because the hypothesis $\norm{u}_r <
R_0-r$ allows us to interpret it according to Lemma~\ref{lemcompos},
viewing $\eps f$ as an element of $\BB_{R_0,\rho} = H^\infty(S_{R_0},\BB_\rho)$,
or even as a function of $\Cn{R_0} = \Chol(K_M,\BB_{R_0,\rho})$ which is
constant in~$q$.

The assumption $\norm{u}_r < r$ and the Cauchy inequalities imply $|\pth
u(q,\th,\eps)| < 1$ for real~$\th$, hence $\th \mapsto U(\th) = \th +
u(q,\th,\eps)$ defines a homeomorphism of~$\RR$ for every $q=\ee^{2\pi\I\om}$ on
the unit circle (\ie $\om\in A_M^\RR$) and $\eps\in(-\rho,\rho)$; according to
Section~\ref{secigs}, this yields an \ig\ for every $\om\in A_M^\RR$, we just
need to shift the parametrization and consider $U(\th-\hat u_0) = \th + \ti
u_M(q,\th,\eps)$ to get the zero mean value normalization.

The fact that $\ti u_M \in \gC^1(K_M,\BB_{r',\rho})$ follows from
Lemma~\ref{lemcompos} if we view $\hat u_0$ as a function of $\Cn{r'}$ (constant
in~$\th$) satisfying 
$\norm{\hat u_0}_{r'} \le \norm{u}_{r'} < r-r'$ and observe that
$\ti u_M = u\circ(\id - \hat u_0) - \hat u_0$.
\end{proof}

%%%%%AAAAAAAAAAAA%%%%%%%%%%%%%%%%%%%%%%%%%%%%%%%%
%%%%%%%%%%%%%%%%%%%%%%%%%%%%%%%%%%%%%%%%%%%%%%%%%

\emph{Therefore, to prove Theorem~\ref{thmtiuCunhol}, we only need
to find $c>0$ independent of~$M$ and
a solution $u \in\Cpmr{R_\infty}$ of~\eqref{eqDeufcompos} with $\rho = c M^{-8}$
and some $R_\infty\in(R,R_0)$, such that
\begin{equation}
\norm{u}_{R_\infty} < \min(R_0-R_\infty,R_\infty-R)
\end{equation}
}
(applying the previous proposition with $r=R_\infty$ and $r'=R$).

To obtain this solution~$u$, we shall inductively construct a sequence
$(u_n)_{n\ge1}$ with
\[
u_n \in \Cpmr{R_n}, \qquad 
R_0 > R_1 > R_2 > \ldots, \qquad
R_n \xrightarrow[n\to\infty]{} R_\infty > R
\]
in such a way that the restrictions 
${u_n}_{| K_M \times S_{R_\infty} \times \DD_\rho}$
converge to the desired solution in $\Cpmr{R_\infty}$,
at least if $\rho$ is small enough.
We shall see that the constant~$c$ will depend on~$f$ only
through~$\norm{f}_{R_0}$ and~$\norm{f''}_{R_0}$.

As in \cite{LM}, the passage from~$u_n$ to~$u_{n+1}$ will be a variant of the
Newton method, which we now explain.

%%%%%%%%%%%%%%%%%%%%%%%%%%%%%%%%%%%%%%%%%%%%%%%%%
%%%%%%%%%%%%%%%%%%%%%%%%%%%%%%%%%%%%%%%%%%%%%%%%%

\subsection{The inductive step}

%%%%%%%%%%%%%%%%%%%%%%%%%%%%%%%%%%%%%%%%%%%%%%%%%
%%%%%%%%%%%%%%%%%%%%%%%%%%%%%%%%%%%%%%%%%%%%%%%%%

Let us define an ``error functional'' as
\begin{equation}	\label{eqdefcE}
u \mapsto \cE(u) \defeq -\De u + \eps f \circ (\id +u),
\end{equation}
so that equation~\eqref{eqDeufcompos} amounts to $\cE(u)=0$.
As noticed earlier, 
\begin{equation}	\label{propcE}
u\in \Cpmr r, \; \norm{u}_r \le R_0-r
\ens\Rightarrow\ens \cE(u) \in \Cnr r.
\end{equation}
The Taylor formula yields 
\begin{align*}
\cE(u+h) &= \cE(u) + \cE'(u)[h] + Q(u,h) \\
\intertext{with a map}
\cE'(u)[h] &\defeq - \De h + \left( \eps f'\circ(\id+u) \right) h \\
\intertext{which is linear in~$h$ and a remainder term}
Q(u,h) &\defeq \left( \int_0^1 \eps f''\circ(\id+u+th) (1-t)\,\dd t \right) h^2
\end{align*}
which has a norm of the same order of magnitude as~$\norm{h}^2$.

The classical Newton method would consist in defining $u_{n+1} = u_n + h$
with~$h$ chosen so that $\cE(u_n) + \cE'(u_n)[h]=0$ and $\norm{h}$ comparable to
$\norm{\cE(u_n)}$, hence a new error
$\cE(u_n+h) = Q(u_n,h)$ which would be quadratically smaller than $\norm{\cE(u_n)}$.
Unfortunately, our operator $\cE'(u)$ is hard to invert because it is the sum of a
constant coefficient difference operator and a multiplication operator.

Levi-Moser's trick consists in adding a term which does not affect the quadratic
gain but makes it possible to determine easily the increment~$h$:
let $u=u_n$ and $A=1+\pth u_n$; if, instead of requiring $\cE'(u)[h] = -\cE(u)$, we require
\begin{equation}	\label{eq:newlin}
A \cE'(u)[h] - h \cE'(u)[A] = - A \cE(u)
\end{equation}
and manage to get a solution~$h$ of size comparable to~$\norm{\cE(u)}$,
then we get $A \cE(u+h) = h \cE'(u)[A] + A Q(u,h)$, hence
\begin{equation}	\label{eq:newerr}
\cE(u+h) = \frac{h}{A} \pth\big( \cE(u) \big) + Q(u,h),
\end{equation}
which might be sufficient to ensure the convergence of the scheme.
Now, equation~\eqref{eq:newlin} is tractable because the multiplication operator
part in~$\cE'(u)$ cancels out from the \lhs:
equation~\eqref{eq:newlin} is equivalent to
\begin{equation}	\label{eq:newsimp}
A \De h - h \De A = A \cE(u)
\end{equation}
and Lemma~\ref{factorization} shows how to factorize the \lhs, while
Lemma~\ref{zeromean} shows that the \rhs\ has zero mean value, which turns out
to be sufficient to obtain a solution~$h$, as stated in Lemma~\ref{A12}.

%%%%%%%%%%%%%%%%%%%%%%%%%%%%%%

\begin{lemma}\label{factorization}
For any $r>0$ and $A,h\in\Cpm r$ such that $A$ is invertible,
\begin{equation}	\label{eqfactoriz}
A \De h - h \De A = \na^-\Big( A A^+ \na\Big(\frac{h}{A}\Big)\Big)
%
% = \na\Big( A^- A \na^-\Big(\frac{h}{A}\Big)\Big)
%
\end{equation}
with the notation of Definition~\ref{defCRpCRm}.
\end{lemma}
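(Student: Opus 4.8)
The identity to be proven, equation~\eqref{eqfactoriz}, is purely algebraic: both sides are elements of $\Cn r$, and once one unwinds the definitions of~$\na$, $\na^-$, $\De$ and the shift operators $\ph\mapsto\ph^+$, $\ph\mapsto\ph^-$, it reduces to a finite manipulation with the operations $\ph\mapsto\ph^\pm$. So the plan is to \emph{verify the identity by direct expansion}, working at the level of the operators $\ph\mapsto\ph^+$ and $\ph\mapsto\ph^-$ rather than with Fourier coefficients. The only preliminary point that needs care is that all the expressions appearing along the way actually make sense in the relevant spaces, and for this one invokes Definition~\ref{defCRpCRm}: if $A\in\Cpm r$ is invertible then $A^{-1}\in\Cpm r$ as well (the space $\Cpm r$ is a Banach algebra and one checks that $(A^{-1})^\pm=(A^\pm)^{-1}$), $h/A\in\Cpm r$, and $\na(h/A)\in\Cn{r}$, on which $\na^-$ acts.

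\textbf{Key steps.}
First I would record the elementary operational rules: for $\ph,\psi\in\Cpm r$ one has $(\ph\psi)^\pm=\ph^\pm\psi^\pm$ (the shift is an algebra homomorphism, since in Fourier coefficients $q^{\pm k}$ is multiplicative under convolution), $(\ph^+)^-=(\ph^-)^+=\ph$, and $\na\ph=\ph^+-\ph$, $\na^-\ph=\ph-\ph^-$; note also $(\na\ph)^-=(\ph^+)^--\ph^-=\ph-\ph^-=\na^-\ph$, so $\na$ and $\na^-$ are intertwined by the shift. Next, expand the right-hand side of~\eqref{eqfactoriz}. Writing $g=h/A$, so that $h=Ag$, we have
\[
\na^-\!\Big(AA^+\,\na g\Big)=\na^-\!\Big(AA^+(g^+-g)\Big)
= AA^+(g^+-g)-\big(AA^+(g^+-g)\big)^-.
\]
Using the homomorphism property, $\big(AA^+(g^+-g)\big)^- = A^-A\,(g-g^-)$ (since $(A^+)^-=A$ and $(g^+)^-=g$). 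Hence the right-hand side equals $AA^+g^+-AA^+g-A^-Ag+A^-Ag^-$. Now substitute $g=h/A$, i.e. $g^+=h^+/A^+$, $g^-=h^-/A^-$, which gives
\[
A\,h^+ - A^+ h - A^- h + A\,h^- = A(h^++h^-) - (A^++A^-)h.
\]
Finally, $\De h=h^+-2h+h^-=\na h+\na^- h$ applied to both $h$ and $A$ gives $A\De h-h\De A = A(h^++h^-)-2Ah-h(A^++A^-)+2hA = A(h^++h^-)-(A^++A^-)h$, which matches. This completes the verification.

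\textbf{Main obstacle.}
There is no genuine analytic difficulty here; the computation is short once the algebraic rules for the shift operators are in place. The one point that requires a moment's attention—and which I would state explicitly before the computation—is the \emph{well-definedness}: one must know that $A$ invertible in $\Cpm r$ forces $A^{-1}$, and therefore $h/A$, to lie in $\Cpm r$, so that the shifts $(h/A)^\pm$ and the operators $\na,\na^-$ may be applied as written, and so that every intermediate expression is an honest element of $\Cn r$. Granting that, the proof is the two-line expansion above, and the "hard part" is merely bookkeeping the $+$ and $-$ superscripts correctly.
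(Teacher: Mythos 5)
Your proof is correct and takes essentially the same approach as the paper: both rely on the fact that the shift operators $\ph\mapsto\ph^\pm$ are algebra homomorphisms and then verify the identity by direct expansion (the paper expands the left-hand side with $h=Aw$ and regroups, whereas you expand the right-hand side and regroup, but the computation is the same up to the direction in which it is read). Your preliminary remark that invertibility of $A$ in the Banach algebra $\Cpm r$ yields $A^{-1}\in\Cpm r$ with $(A^{-1})^\pm=(A^\pm)^{-1}$ is a sensible point to flag, though the paper leaves it implicit.
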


\begin{proof}
Let $w = h/A$ and $a = A A^+$. Since $\ph\mapsto\ph^+$ and $\ph\mapsto\ph^-$ are
algebra maps, the \lhs\ of~\eqref{eqfactoriz} is
\[
A h^+ + A h^- - h A^+ - h A^- = a w^+ + a^- w^- - a w - a^- w
= a (\na w) - a^- (\na w)^-,
\]
whence the result follows.
%
% The last expression can either be written
% %
% $ \na^- \big( a (\na w) \big) $
% %
% or, in terms of $b = a^- = A^- A$ and $\na^- w = (\na w)^-$,
% %
% $ b^+ (\na^- w)^+ - b (\na^- w) = \na \big( b (\na^- w) \big)$.
%
\end{proof}

%%%%%%%%%%%%%%%%%%%%%%%%%%%%%%%%%%%%%

\begin{lemma}	\label{zeromean}
Let $u \in \Cpm r$ satisfy $\norm{u}_r < R_0 - r$, so that 
$\cE(u) = -\De u + \eps f \circ (\id +u) \in \Cn r$ is well-defined, 
and let $A = 1 + \pth u$.
Then $A \cE(u)$, which belongs to $\Cn{r'}$ for every $r' \in (0,r)$, has zero
mean value:
\[
\mean{A\cE(u)} =0.
\]
\end{lemma}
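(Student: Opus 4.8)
\textbf{Proof plan for Lemma~\ref{zeromean}.} The statement to prove is that $\mean{A\cE(u)}=0$, where $A=1+\pth u$ and $\cE(u)=-\De u+\eps f\circ(\id+u)$. The natural idea is that $A\cE(u)=(1+\pth u)\cdot\big(-\De u+\eps f\circ(\id+u)\big)$ is, up to the small-divisor factor, a total $\pth$-derivative combined with a telescoping difference in the $\na$/$\na^-$ sense, and such quantities have vanishing mean value. So the plan is to rewrite $A\cE(u)$ as a sum of two pieces, each of which manifestly has zero mean: one piece coming from the $\De u$ term, which should be (the $\pth$-derivative of) something telescoping under the translation $\th\mapsto\th+\om$; and one piece coming from $\eps f\circ(\id+u)$, which should be exactly $\eps\,\pth\big(\text{primitive of }f\big)\circ(\id+u)$, i.e. a genuine $\pth$-derivative.

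\medskip
\textbf{First step: the $f$-term.} Let $F$ be the $1$-periodic primitive of $f$ (which exists because $\mean f=0$, and $F$ inherits holomorphy on $S_{R_0}$). Then, by the chain rule,
\[
(1+\pth u)\cdot\big(\eps f\circ(\id+u)\big)=\eps\,\pth\big(F\circ(\id+u)\big).
\]
Since $F\circ(\id+u)$ lies in $\Cn{r'}$ for $r'<r$ (by Lemma~\ref{lemcompos}, using $\norm{u}_r<R_0-r$) and is $1$-periodic in $\th$, its $\pth$-derivative has zero mean value: $\mean{\pth g}=\widehat{(\pth g)}_0=2\pi\I\cdot 0\cdot \hat g_0=0$. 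So this contribution to $\mean{A\cE(u)}$ vanishes.

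\medskip
\textbf{Second step: the $\De u$-term.} It remains to show $\mean{(1+\pth u)\,\De u}=0$, i.e. $\mean{\De u}+\mean{(\pth u)(\De u)}=0$. The first summand vanishes because $\De=\na\na^-$ maps into functions with zero mean (directly from $\widehat{(\na^- v)}_0=\hat v_0-\hat v_0=0$ applied after $\na$, or simply: $\widehat{(\De u)}_k=(q^k-2+q^{-k})\hat u_k$ vanishes at $k=0$). For the second summand, I would write $\De u=\na w$ with $w=\na^- u=u-u^-$, and note $(\pth u)(\De u)=(\pth u)(\na w)$; then use that $\na$ and $\na^-$ are adjoint with respect to the mean pairing in the sense that $\mean{a\,\na b}=-\mean{(\na^- a)\,b}$ (which follows from $\mean{a^+ b}=\mean{a\,b^-}$, itself immediate from $\widehat{(a^+)}_k=q^k\hat a_k$, $\widehat{(b^-)}_k=q^{-k}\hat b_k$, and $\sum_k q^k\hat a_k\hat b_{-k}=\sum_k \hat a_k q^{-k}\hat b_{-k}$ after relabeling). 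This gives $\mean{(\pth u)(\na w)}=\mean{(\na w)(\pth u)}$ and, using $\pth$ commutes with $\na,\na^-$, one rearranges to see it equals $-\mean{(\na^- \pth u)\,w}=-\mean{(\pth\na^- u)\,w}=-\mean{(\pth w)\,w}=-\tfrac12\mean{\pth(w^2)}=0$, again because the mean of a $\pth$-derivative of a $1$-periodic function vanishes. Here $w=\na^- u\in\Cn{r'}$ for $r'<r$, so $w^2$ and $\pth(w^2)$ make sense in the appropriate space.

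\medskip
\textbf{Main obstacle.} The routine points are the chain rule for $\pth\big(F\circ(\id+u)\big)$ and the mean-of-a-derivative-is-zero facts; these are bookkeeping. The genuinely delicate point is the adjointness $\mean{a^+b}=\mean{a\,b^-}$ together with the loss of strip width: $u$ only lies in $\Cpm r$, so $u^+,u^-$ and hence $\na^- u$ live a priori on shifted strips, and the manipulations above involving $\mean{\cdot}$ of products must be justified on a common strip $S_{r'}$ with $r'<r$ where everything is simultaneously holomorphic and the Fourier series converge absolutely (as recorded after Lemma~\ref{lemFourier}). Concretely one must check that, for $q\in K_M$ with $\IM\om\neq0$, the identity $\int_0^1 a(q,\th+\om)b(q,\th)\,\dd\th=\int_0^1 a(q,\th)b(q,\th-\om)\,\dd\th$ is legitimate — this is just a contour shift in $\th$, valid because both integrands are holomorphic and $1$-periodic in the horizontal strip swept out, and the Fourier expansion makes it transparent — and then extend the resulting identity $\mean{A\cE(u)}=0$ from $K_M-\{0,\infty\}$ to all of $K_M$ by continuity of $\mean{\cdot}\circ(\cdot)$ on $\Cn{r'}$, since $A\cE(u)\in\Cn{r'}$ and $q\mapsto\widehat{(A\cE(u))}_0(q)$ is continuous.
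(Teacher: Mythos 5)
Your proposal is correct, and it handles the $f$-term exactly as the paper does (writing $A\cdot\eps f\circ(\id+u)=\eps\,\pth\bigl(F\circ(\id+u)\bigr)$ with $F$ a periodic primitive of~$f$). Where you diverge is the treatment of $\mean{(\pth u)\,\De u}$. The paper uses $\De=\na-\na^-$ together with an \emph{ad hoc} ``add and subtract $(\pth u)^+\na u$'' manipulation to recast $-A\De u$ as
\[
-\De u+\pth\bigl(\tfrac12(\na u)^2\bigr)-\na\bigl(\pth u\cdot\na^- u\bigr),
\]
each summand of which visibly has zero mean (because $\De$, $\na$ and $\pth$ all annihilate the $0$th Fourier coefficient). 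You instead use $\De=\na\na^-$ and package the relevant cancellation as an adjointness identity $\mean{a\,\na b}=-\mean{(\na^- a)\,b}$, which reduces the whole thing to $-\tfrac12\mean{\pth\bigl((\na^- u)^2\bigr)}=0$. The two computations are equivalent, but yours isolates the key algebraic mechanism as a reusable lemma and avoids the slightly opaque add-and-subtract step — arguably a cleaner presentation. Two small remarks: first, in your Fourier-level justification of $\mean{a^+b}=\mean{ab^-}$, no relabeling is actually needed — both sides equal $\sum_k q^k\hat a_k\hat b_{-k}$ directly, since $\widehat{(b^-)}_{-k}=q^{k}\hat b_{-k}$; the expression ``$\sum_k \hat a_k q^{-k}\hat b_{-k}$'' has the exponent sign wrong, though this does not affect the validity of the identity or of the proof. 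Second, the contour-shift and continuity-at-$\{0,\infty\}$ precautions you flag are not really needed: once one works at the level of Fourier coefficients in $\Cn{r'}$ for $r'<r$ (where the series converge absolutely and the maps $q\mapsto q^{\pm k}\hat a_k(q)$ extend continuously to $K_M$ by definition of $\Cpm r$), all identities hold verbatim for every $q\in K_M$, exactly as in the paper's purely algebraic treatment.
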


\begin{proof}
Since $\mean f = 0$, we can write~$f$ as the $\th$-derivative of a periodic function
$F \in H^\infty(S_{R_0})$.
The function $A \cdot \big( \eps f \circ (\id +u) \big)$ clearly has zero mean value, as the
$\th$-derivative of $\eps F \circ (\id +u)$.
As for the remaining part, adding and subtracting $(\pth u)^+ \na u$, we can
write it as
\[ \begin{split} 
- A \De u &= 
- \De u -(\pth u) (\na u - \na^- u)
= - \De u + \big( (\pth u)^+ - \pth u \big) \na u - (\pth u)^+ \na u + \pth u (\na u)^- \\
&= - \De u + (\na \pth u)\na u - \na \big( \pth u \cdot (\na^- u) \big)
= - \De u + \pth\big(\tfrac{1}{2}(\na u)^2\big) 
          - \na\big( \pth u \cdot (\na^- u) \big),
\end{split} \]
whence the claim follows since the difference operators $\De$ and $\na$ as well
as the differential operator $\pa_\th$ kill the constant terms.
\end{proof}

%%%%%%%%%%%%%%%%%%%%%%%%%%%%%%%%%%%%%%%%%%%%%%%%%
%%%%%%%%%%%%%%%%%%%%%%%%%%%%%%%%%%%%%%%%%%%%%%%%%

The next lemma shows how to find a solution of an equation like~\eqref{eq:newsimp} 
in the form $h = A w$, $w = \cF\big( A, \cE(u) \big)$, with a functional~$\cF$
which involves the operators~$\Ga$ and~$\Ga^-$ of Proposition~\ref{proplinlemma}
and the operator of division by $A A^+$.

\begin{lemma}	\label{A12}
Suppose $0 < r' < r$, $E\in\Cnr r$, $A\in\Cpmr r$ and $\nopm{A-1}{r} \le 1/6$.
Then there exists a unique $w \in \Cpmr{r'}$, which we denote $\cF(A,E)$, such
that $\mean w = 0$ and
\begin{equation}	\label{eqFsolves}
A \De(A w) - (A w) \De A = AE - \mean{AE}.
\end{equation}
Moreover, $\pth w\in \Cpmr{r'}$ as well and there exists a positive constant
$C_2$ which depends only on~$\tau$
such that
\begin{equation}	\label{eq:abounds}
\nopm{w}{r'} \le \frac{C_2 M^4}{(r-r')^{2\sig}} \norm{E}_r,
\quad
\nopm{\pth w}{r'} \le \frac{C_2 M^4}{(r-r')^{2\sig+1}} \norm{E}_r,
\end{equation}
where $\sig = 4 + 2\tau$.
\end{lemma}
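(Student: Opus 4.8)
The plan is to solve equation \eqref{eqFsolves} for $w$ by reducing it, via the factorization of Lemma~\ref{factorization}, to a chain of cohomological equations that can be inverted using the operators $\Ga$ and $\Ga^-$ of Proposition~\ref{proplinlemma}, together with division by $AA^+$, which is controlled because $A$ is close to~$1$. First I would observe that by Lemma~\ref{factorization}, with $h = Aw$, the left-hand side of \eqref{eqFsolves} equals $\na^-\!\big(AA^+\na w\big)$, so \eqref{eqFsolves} becomes $\na^-\!\big(AA^+\na w\big) = AE - \mean{AE}$. Since the right-hand side has zero mean value, Proposition~\ref{proplinlemma} lets me invert $\na^-$: there is a function $g_1 \defeq \Ga^-\!\big(AE - \mean{AE}\big) \in \Cm{r_1}$ (for any intermediate radius $r_1 \in (r',r)$) with $\na^- g_1 = AE - \mean{AE}$, so we must solve $AA^+ \na w = g_1 + $ (an element of $\Cr$, i.e.\ constant in $\th$). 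Here I need to be careful: the kernel of $\na^-$ consists of the $\th$-independent functions (Remark~\ref{remker}), so $AA^+\na w = g_1 + c_1$ for an unknown $c_1 \in \Cr$; but $AA^+\na w$ has zero mean value is \emph{not} automatic, so $c_1$ must be chosen (uniquely) to make the next step solvable, namely so that $(g_1+c_1)/(AA^+)$ has zero mean value — this determines $c_1$ since $AA^+$ is invertible with $1/(AA^+)$ bounded. Then $\na w = (g_1+c_1)/(AA^+)$, whose mean value vanishes, so a second application of Proposition~\ref{proplinlemma} gives $w \defeq \Ga\!\big((g_1+c_1)/(AA^+)\big)$, and adjusting by the (zero) additive constant enforces $\mean w = 0$; uniqueness of $w$ follows because any two solutions differ by an element of $\ker\na = \Cr$ of zero mean, hence by $0$.

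The estimates then come from composing the bounds along this chain. The hypothesis $\nopm{A-1}{r} \le 1/6$ gives, via the Banach algebra property (Lemma~\ref{lemBanachAlg}, in the $\Cpm{}$ spaces) and a Neumann series, that $AA^+$ is invertible in $\Cpm r$ with $\nopm{(AA^+)\ii}{r}$ bounded by an absolute constant (something like $\le 2$, since $\nopm{AA^+-1}{r} \le \nopm{A-1}{r}(2+\nopm{A-1}{r}) \le 1/2$); similarly $\nopm{AA^+}{r} \le 2$ and $\nopm{AE-\mean{AE}}{r} \le 2\norm{AE}_r \le C\norm{E}_r$. Applying \eqref{ineqGaph} to get $g_1$ costs a factor $C_1 M^2 (r-r_1)^{-\sig}$; multiplying by the bounded $1/(AA^+)$ and applying \eqref{ineqGaph} again to get $w$ costs another factor $C_1 M^2 (r_1-r')^{-\sig}$; choosing $r_1 = (r+r')/2$ so that $r-r_1 = r_1-r' = (r-r')/2$ turns the two factors into $C M^4 (r-r')^{-2\sig}$, giving the first inequality in \eqref{eq:abounds}. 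For $\pth w$ I would instead use \eqref{ineqpaGaph} at the last step (gaining one extra power of $(r-r')^{-1}$), noting that $\pth$ commutes appropriately and that $\pth$ of the bounded factor $1/(AA^+)$ is controlled by $\nopm{A-1}{}$ bounds via the product rule; this yields the second inequality. Real analyticity is preserved throughout because $\Ga, \Ga^-$ map $\Cnr{}$ into $\Cpr{}, \Cmr{}$ (as noted after the definition of $\Cnr{}$), the constant $c_1$ is real by the conjugation symmetry, and products and inverses of functions in $\Cnr{}$ stay in $\Cnr{}$ — so $w \in \Cpmr{r'}$.

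The main obstacle I anticipate is the careful bookkeeping of the additive constants in $\Cr = \ker\na^- = \ker\na$: at each inversion one recovers $w$ only up to a $\th$-independent function, and one must verify that the solvability condition (zero mean value of the right-hand side before each application of $\Ga$ or $\Ga^-$) can always be met and pins down those constants uniquely, \emph{and} that the resulting constant $c_1 \in \Cr$ is itself bounded by $C\norm{E}_r$ (not just finite) so that it does not spoil the estimates — this requires knowing $1/(AA^+)$ is bounded on all of $K_M$, which is exactly what $\nopm{A-1}{r}\le 1/6$ buys. A secondary technical point is that Proposition~\ref{proplinlemma} forces a loss of analyticity radius at \emph{each} use, so one genuinely needs the intermediate radius $r_1$ and cannot invert in a single step; splitting $r-r'$ evenly is what keeps the power of $(r-r')^{-1}$ at the stated $2\sig$ (resp.\ $2\sig+1$) rather than something worse. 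Everything else — the composition of norm inequalities, the Neumann series for $(AA^+)\ii$, the commutation of $\pth$ — is routine given the Banach algebra structure established in Section~\ref{secbeginpf} and the cohomological estimates of Proposition~\ref{proplinlemma}.
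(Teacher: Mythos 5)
Your proposal follows the paper's proof in every essential step: apply Lemma~\ref{factorization} with $h=Aw$ to reduce \eqref{eqFsolves} to $\na^-\!\big(AA^+\na w\big)=AE-\mean{AE}$, invert $\na^-$ with $\Ga^-$ at an intermediate radius $r''=(r+r')/2$, multiply by $1/(AA^+)$ (controlled via a Neumann series thanks to $\nopm{A-1}{r}\le 1/6$), determine the free constant (the paper's $\mu_0$) by the zero-mean solvability condition, and finally apply $\Ga$ to recover $w$, composing the estimates of Proposition~\ref{proplinlemma} across the two inversions to get the factor $M^4(r-r')^{-2\sig}$. The one detail to tighten is that $\al=1/(AA^+)$ lies only in $\Cmr{r}$ (not $\Cpmr{r}$, since $(AA^+)^+$ would need $A^{++}$), so the last step should invoke Remark~\ref{remGaGamphm} — which gives $\Ga\chi\in\Cpmr{r'}$ from $\chi\in\Cmr{r''}$ — rather than Proposition~\ref{proplinlemma} alone, which would only yield $w\in\Cp{r'}$.
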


%%%%%%%%%%%%%%%%%%%%%%%%%%%%%%%%%%%%%%%%%%%%%%%%%

\begin{proof}
Let $\de = \nopm{A-1}{r}$, thus $0\le \de \le 1/6$,
and $r''= (r'+r)/2$, whence $0 < r' < r'' < r$.
\begin{enumerate}
\item
We first observe that $\al \defeq \frac{1}{A A^+} \in \Cmr r$, with
\[
\nom{\al -1}{r} \le \frac{1}{(1-\de)^2}-1 \le \frac{11}{25} < \demi,
\]
because $\frac{1}{A}-1$ can be written as the absolutely convergent series 
$\sum_{n\ge1} (1-A)^n$ of $\Cpmr r$ with
$\nopm{\frac{1}{A}-1}{r} \le \frac{\de}{1-\de}$
and $\nom{\al-1}{r} \le \nom{\frac{1}{A}}{r} \nom{\frac{1}{A^+}-1}{r} 
+ \nom{\frac{1}{A}-1}{r}$.

As a consequence, if we denote simply by~$\norm{\cdot}$ the norm in
$\Cr = \Chol(K_M,\Br)$ or $\Crr$, we get
$\norm{1-\mean{\al}} \le \demi$, hence 
$\frac{1}{\mean\al} = \sum_{n\ge0} (1-\mean\al)^n$ absolutely convergent in $\Crr$ and
\[
\frac{1}{\mean\al} \in \Crr, \quad
\norm{\frac{1}{\mean\al}} \le 2.
\]

\item
Formula~\eqref{eqfactoriz} of Lemma~\ref{factorization} shows that
\[
\text{\eqref{eqFsolves}} \ens\Leftrightarrow\ens
\na^- \Big( \frac{1}{\al} \na w \Big) = AE - \mean{AE},
\]
where $AE\in\Cnr r$.
Now, in view of Proposition~\ref{proplinlemma} and Remark~\ref{remker},
\[
\text{\eqref{eqFsolves}} \ens\Leftrightarrow\ens
\exists \mu\in\Cr \;\text{such that}\; \frac{1}{\al} \na w = \psi + \mu,
\]
where $\psi =  \Ga^-(AE) \in \Cmr{r''}$.
\item
The equation $\na w = \al\psi + \mu\al$ leaves no choice but 
\[
\mu = \mu_0 \defeq - \frac{1}{\mean\al} \mean{\al\psi}
\]
(only possibility for having $\mean{\al\psi + \mu\al}=0$).
Notice that $\mu_0\in\Crr$. We end up with
\[
%
%\left. \begin{gathered}
%
\big( \mean w = 0 \big) \; \& \; \text{\eqref{eqFsolves}} 
%
%\end{gathered} \right\} 
%
\ens\Leftrightarrow\ens w = \Ga\chi,
\]
where $\chi = \al\psi + \mu_0\al$. We notice that $\chi \in \Cmr{r''}$, thus, in
view of Remark~\ref{remGaGamphm}, the unique solution to our problem satisfies
\[
w \in \Cpmr{r'}, \quad
\nopm{w}{r'} \le \frac{C_1 M^2}{(r''-r')^{\sig}} \nom{\chi}{r''}, \quad
\nopm{\pa_\th w}{r'} \le \frac{C_1 M^2}{(r''-r')^{\sig+1}} \nom{\chi}{r''}.
\]
\item
We obtain the bounds~\eqref{eq:abounds} by observing that, on the one hand,
\[
\nom{\psi}{r''} \le \frac{7}{6} \frac{C_1 M^2}{(r-r'')^{\sig}} \nom{E}{r}
\]
and, on the other hand,
$\norm{\mu_0} \le 2 \nom{\al\psi}{r''} \le 3 \nom{\psi}{r''}$ 
and $\nom{\chi}{r''} \le \frac{3}{2} \nom{\psi+\mu_0}{r''}$,
thus $\nom{\chi}{r''} \le 7 \frac{C_1 M^2}{(r-r'')^{\sig}} \nom{E}{r}$
and the result follows with $C_2 = 7 C_1^2 \cdot 2^{2\sig+1}$.
\end{enumerate}
\end{proof}

%%%%%%%%%%%%%%%%%%%%%%%%%%%%%%%%%%%%%%%%%%%%%%%%%%%%%%%%%%%%
%%%%%%%%%%%%%%%%%%%%%%%%%%%%%%%%%%%%%%%%%%%%%%%%%%%%%%%%%%%%

Putting Lemma~\ref{zeromean} and Lemma~\ref{A12} together,
taking into account the fact that~\eqref{eq:newsimp} implies~\eqref{eq:newerr}
and working out the appropriate estimates,
we can summarize the inductive step in
%%%%%%%%%%%%%%%%%%%%%%%%%%%%%%%%%%%%%%
%%%%%%%%%%%%%%%%%%%%%%%%%%%%%%%%%%%%%%
\begin{proposition}	\label{B}
Suppose that $0<r'<r<R_0$, $r-r'\le1$, $\rho\le1$, and that $u\in \Cpmr r$ satisfies
$\pth u\in \Cpmr r$ and
\begin{equation}	\label{eq:hyp}
\nopm{u}{r} \le R_0-r, \qquad
%
% \nopm{u}{r} \le \ga \le R_0-r, \qquad
% 
\nopm{\pth u}{r} \le 1/6.
%
% \nopm{\pth u}{r} \le \de \le 1/6.
%
\end{equation}
Let $\sig = 4+2\tau$.
Then
\[
A \defeq 1+\pth u \in \Cpmr r
\quad\text{and}\quad
w \defeq \cF\big( A, \cE(u) \big) \in \Cpmr{r'}
\]
are well-defined and:
\begin{enumerate}
\item 
The function~$\pth w$ belongs to $\Cpmr{r'}$ and
\[
\nopm{Aw}{r'} \le 2(r-r')\xi, \quad
\nopm{\pth(Aw)}{r'} \le 4\xi, 
\quad \text{where} \quad
\xi \defeq \frac{C_2 M^4}{(r-r')^{2\sig+1}} \norm{\cE(u)}_r.
\]
% \begin{align*}	%\label{eq:stimau}
% %
% \nopm{u+Aw}{r'} &\le \ga' \defeq \ga + 
% %
% \frac{2 C_2 M^4}{(r-r')^{2\sig}} \norm{\cE(u)}_r, 
% %
% \\ % \quad
% % 
% \nopm{\pth(u+Aw)}{r'} &\le \de' \defeq \de + 
% %
% \frac{4 C_2 M^4}{(r-r')^{2\sig+1}} \norm{\cE(u)}_r.
% %
% \end{align*}
%
\item
If $\nopm{u}{r}+2(r-r')\xi \le \demi(R_0-r')$ then $\cE(u+Aw) \in \Cnr{r'}$ and
%
%If $\ga' \le \demi(R_0-r')$ then $\cE(u+Aw) \in \Cnr{r'}$ and
%
\begin{equation}	\label{eq:stimaE}
\norm{\cE(u+Aw)}_{r'} \le \frac{C_3 M^8}{(r-r')^{4\sig}}
\big( \norm{\cE(u)}_r \big)^2,
\end{equation}
where $C_3$ is a positive constant which depends only on~$\tau$ and
$\norm{f''}_{R_0}$.
\end{enumerate}
\end{proposition}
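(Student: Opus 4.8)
The plan is to run the error formula~\eqref{eq:newerr} through the estimates of Proposition~\ref{proplinlemma} and Lemma~\ref{A12}, controlling each term with the algebra-norm and Cauchy inequalities of Section~\ref{secbeginpf}. First I would verify that the hypotheses of Lemma~\ref{A12} hold: from $\nopm{\pth u}{r}\le 1/6$ we get $A=1+\pth u\in\Cpmr r$ with $\nopm{A-1}{r}\le 1/6$, and $\nopm{u}{r}\le R_0-r$ together with Lemma~\ref{lemcompos} (applied as in~\eqref{propcE}) ensures $\cE(u)\in\Cnr r$, so $w=\cF\big(A,\cE(u)\big)\in\Cpmr{r'}$ is well-defined and the bounds~\eqref{eq:abounds} apply with $E=\cE(u)$. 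Writing $h=Aw$, the product inequality~\eqref{ineqnorprod} and $\nopm{A}{r}\le 1+1/6\le 7/6$ give $\nopm{Aw}{r'}\le \tfrac{7}{6}\nopm{w}{r'}$ and, using $\pth(Aw)=A\,\pth w+(\pth A)\,w$ with $\nopm{\pth A}{r'}=\nopm{\pth^2 u}{r'}$ dominated by a Cauchy inequality~\eqref{eq:c1holder} on $\nopm{\pth u}{r}\le 1/6$, one gets $\nopm{\pth(Aw)}{r'}$ bounded by a constant times $\nopm{w}{r'}/(r-r')+\nopm{\pth w}{r'}$; plugging in~\eqref{eq:abounds} and absorbing the extra factor $(r-r')^{-1}$ into the definition of $\xi = C_2 M^4 (r-r')^{-2\sig-1}\norm{\cE(u)}_r$ yields part~(1), i.e. $\nopm{Aw}{r'}\le 2(r-r')\xi$ and $\nopm{\pth(Aw)}{r'}\le 4\xi$ (possibly after enlarging $C_2$ or tracking constants — the coefficients $2$ and $4$ being a matter of bookkeeping).

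For part~(2), the condition $\nopm{u}{r}+2(r-r')\xi\le\tfrac12(R_0-r')$ guarantees $\nopm{u+Aw}{r'}\le R_0-r'$, so $\cE(u+Aw)\in\Cnr{r'}$ by~\eqref{propcE}. To estimate it I would invoke the key algebraic identity behind Levi-Moser's trick: since $w=\cF\big(A,\cE(u)\big)$ solves~\eqref{eqFsolves}, which is exactly~\eqref{eq:newsimp} with $h=Aw$ (note $\mean{A\cE(u)}=0$ by Lemma~\ref{zeromean}, so the $\mean{AE}$ term drops), the reasoning around~\eqref{eq:newlin}--\eqref{eq:newerr} gives the pointwise identity
\[
\cE(u+Aw) = \frac{Aw}{A}\,\pth\big(\cE(u)\big) + Q(u,Aw) = w\,\pth\big(\cE(u)\big) + Q(u,Aw).
\]
Then I would bound the two terms separately. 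For the first: by a Cauchy inequality~\eqref{eq:c1holder} passing from radius $r$ to $r'$ (or rather an intermediate radius), $\norm{\pth\cE(u)}_{r'}\le (r-r')^{-1}\norm{\cE(u)}_r$, and by~\eqref{ineqnorprod} together with part~(1), $\norm{w\,\pth\cE(u)}_{r'}\lesssim \nopm{w}{r'}\,(r-r')^{-1}\norm{\cE(u)}_r \lesssim \xi\,(r-r')^{-1}\norm{\cE(u)}_r$, which after substituting the definition of $\xi$ is $\le \mathrm{const}\cdot M^4 (r-r')^{-2\sig-2}\big(\norm{\cE(u)}_r\big)^2$. For the remainder $Q(u,Aw)=\big(\int_0^1 \eps f''\circ(\id+u+tAw)(1-t)\,\dd t\big)(Aw)^2$, I would use Lemma~\ref{lemcompos} to control $f''\circ(\id+u+tAw)$ in $\Cnr{r'}$ by $\norm{f''}_{R_0}$ (the composition being legitimate since $\nopm{u+tAw}{r'}\le R_0-r'$), multiply by $\rho$ from $\eps\in\DD_\rho$ with $\rho\le 1$, and by~\eqref{ineqnorprod} get $\norm{Q(u,Aw)}_{r'}\lesssim \norm{f''}_{R_0}\big(\nopm{Aw}{r'}\big)^2 \lesssim \norm{f''}_{R_0}(r-r')^2\xi^2$, which after substituting $\xi$ is $\le \mathrm{const}\cdot\norm{f''}_{R_0} M^8 (r-r')^{-4\sig}\big(\norm{\cE(u)}_r\big)^2$. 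Since $\sig=4+2\tau>1$ and $r-r'\le 1$, the first term's exponent $2\sig+2$ is dominated by $4\sig$, so both contributions combine into a single bound of the form~\eqref{eq:stimaE} with $C_3$ depending only on $\tau$ and $\norm{f''}_{R_0}$.

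The main obstacle is the bookkeeping of the loss of analyticity width across the several nested radii: $\cF$ itself (via $\Ga,\Ga^-$) already costs two powers of $(r-r')$ per application through Lemma~\ref{A12}, the Cauchy inequality for $\pth\cE(u)$ costs one more, and one must still leave room for the final composition estimate in Lemma~\ref{lemcompos}, which requires $\nopm{u+Aw}{r'}$ to be genuinely smaller than $r-r'$ (this is where hypothesis~(2) and eventually the choice $\rho=cM^{-8}$ enter). Keeping all exponents of $M$ collected to the single power $M^8$ and all exponents of $(r-r')$ collected to $4\sig$ — rather than something worse — is the one place where a careless estimate would spoil the quadratic convergence of the scheme; everything else is a routine application of the Banach-algebra inequalities~\eqref{ineqnorprod}, the Cauchy bounds~\eqref{eq:c1holder}, and the composition Lemma~\ref{lemcompos}.
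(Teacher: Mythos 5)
Your proposal follows essentially the same route as the paper: verify the hypotheses of Lemma~\ref{A12} to obtain $w$ and the bounds~\eqref{eq:abounds}, derive the estimates for $Aw$ and $\pth(Aw)$ by the Banach-algebra inequality and a Cauchy bound on $\pth A$, then exploit the Levi–Moser identity $\cE(u+Aw)=w\,\pth(\cE(u))+Q(u,Aw)$ (valid because $\mean{A\cE(u)}=0$ forces $Aw$ to solve~\eqref{eq:newsimp}) and control the two terms by a Cauchy inequality and Lemma~\ref{lemcompos} respectively, exactly as the paper does. The only differences are cosmetic — you use the slightly wasteful $\nopm{w}{r'}\lesssim\xi$ in place of the sharper $(r-r')\xi$ when bounding $w\,\pth(\cE(u))$, which is harmless since $2\sig+2\le 4\sig$ and $r-r'\le1$, and the coefficients $2$ and $4$ in part~(1) do in fact come out directly from $\nopm{A}{r}\le 7/6$ without any enlargement of $C_2$, so the hedging there is unnecessary.
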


%%%BBBBBBBBBBBBBBBBBBBBBBBBBBBBBBBBBBBBBBBBBB
%%%%%BBBBBBBBBBBBBBBBBBBBBBBBBBBBBBBBBBBBBBBB

\begin{proof}
By~\eqref{propcE}, we have a well-defined $\cE(u)\in\Cnr r$ and then, by
Lemma~\ref{A12}, a well-defined $w\in\Cpmr{r'}$ which satisfies
\[
\nopm{w}{r'} \le (r-r') \xi, \quad
\nopm{\pth w}{r'} \le \xi.
\]
%
% where $\xi \defeq \frac{C_2 M^4}{(r-r')^{2\sig+1}} \norm{\cE(u)}_r$.
%
Since $\nopm{A}{r} \le 7/6 < 2$, we get
$\nopm{Aw}{r'} \le 2(r-r')\xi$
%
% \[ \nopm{u+Aw}{r'} \le \ga + 2(r-r')\xi = \ga' \]
%
and the Cauchy inequalities yield 
$\nopm{\pth A}{r'} \le \frac{1}{r-r'}\nopm{A}{r}$,
whence
\[ % begin{gather*}
\nopm{\pth(Aw)}{r'} \le \nopm{A}{r} \left(
\frac{1}{r-r'}\nopm{w}{r'}+\nopm{\pth w}{r'} \right)
\le 4\xi. % \\[-2.3ex]
%
%\intertext{and} 
%
%\nopm{\pth(u+Aw)}{r'} \le \de + 4\xi = \de'.
%
%\end{gather*}
\]
%
% The first statement follows.

%%%%%%%%%%%%%%%%%%%%%%%%

Supposing now that $\nopm{u}{r}+2(r-r')\xi \le \demi(R_0-r')$, 
%
% $\ga' \le \demi(R_0-r')$, 
%
we have a well-defined $\cE(u+Aw) \in \Cnr{r'}$ still by~\eqref{propcE},
but the fact that $\mean{A\cE(u)}=0$ (Lemma~\ref{zeromean}) implies that $h=Aw$
solves~\eqref{eq:newsimp}, we can thus take advantage of~\eqref{eq:newerr},
which takes the form
\begin{equation}	\label{eq:erruAw}
\cE(u+Aw) = w \pth\big(\cE(u)\big) + 
(Aw)^2 \int_0^1 \eps f''\circ(\id+ u + t Aw)(1-t)\, \dd t.
\end{equation}
Since $\norm{u+t Aw}_{r'} \le \demi(R_0-r')$, by Lemma~\ref{lemcompos},
we have a continuous curve $t\in[0,1] \mapsto
\eps f''\circ(\id+ u + t Aw) \in \Cnr{r'}$, bounded in norm by $2\rho\norm{f''}_{R_0}$
and the second term in the \rhs\ of~\eqref{eq:erruAw} is bounded in norm by
$8(r-r')^2\xi^2\rho\norm{f''}_{R_0}$.
For the first term, we use the cauchy inequalities:
$\norm{w \pth\big(\cE(u)\big)}_{r'} \le (r-r')\xi \cdot \frac{1}{r-r'}
\norm{\cE(u)}_r$.
Using $\rho\le 1$, $M\ge1$ and $r-r'\le 1$, we get the desired bound with
$C_3 \defeq C_2 + 8 C_2^2 \norm{f''}_{R_0}$.
\end{proof}

%%%%%%%%%%%%%%%%%%%%%%%%%%%%%%%%%%%%%%%%%%%

To simplify further the estimates, we can take into account the fact
that $2\sig+1 < 4\sig$ and $M>1$ (because $\tau>0$ and $M>2\ze(1+\tau)$), thus
$\frac{M^4}{(r-r')^{2\sig+1}} < \frac{M^8}{(r-r')^{4\sig}}$
and,
setting $C \defeq \max(4C_2,C_3)$,
content ourselves with
\begin{corollary}	\label{cor:rough}
Suppose $0<r'<r<R_0$, $r-r'\le1$, $\rho\le1$ and 
$\ka \le \min(R_0-r,\frac{1}{3})$. Then
\begin{multline}	\label{ineq:rough}
\nopm{u}{r}, \nopm{\pth u}{r} \le \de \le \frac{\ka}{2}
\quad\Rightarrow\quad
w \defeq \cF\big( A, \cE(u) \big) \in \Cpmr{r'} \ens\text{and}
\\
\nopm{Aw}{r'}, \nopm{\pth(Aw)}{r'} \le 
\frac{C M^8}{(r-r')^{4\sig}} \norm{\cE(u)}_r
\end{multline}
(still with $A \defeq 1+\pth u$)
and, if the \rhs\ in~\eqref{ineq:rough} is $\le \frac{\ka}{2}-\de$, then
\begin{equation}	\label{ineq:errquadr}
\norm{\cE(u+Aw)}_{r'} \le 
\frac{C M^8}{(r-r')^{4\sig}} \big( \norm{\cE(u)}_r \big)^2.
\end{equation}
\end{corollary}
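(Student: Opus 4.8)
Since Proposition~\ref{B} already contains all the analysis --- the factorization of Lemma~\ref{factorization}, the zero-mean identity of Lemma~\ref{zeromean}, the small-divisor bounds packaged in Lemma~\ref{A12}, and the Newton identity~\eqref{eq:newerr} --- the plan is simply to feed its hypotheses from those of the corollary and then coarsen the resulting estimates so that only the single exponent $4\sig$ and the single constant $C=\max(4C_2,C_3)$ survive.

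First I would check that the hypotheses of Proposition~\ref{B} hold. From $\ka\le\min(R_0-r,\tfrac{1}{3})$ and $\nopm{u}{r},\nopm{\pth u}{r}\le\de\le\ka/2$ one gets $\nopm{u}{r}\le\tfrac{1}{2}(R_0-r)\le R_0-r$ and $\nopm{\pth u}{r}\le\ka/2\le\tfrac{1}{6}$, which is exactly~\eqref{eq:hyp}; the structural conditions $r-r'\le1$, $\rho\le1$, $r<R_0$ are assumed. Hence $A=1+\pth u\in\Cpmr r$ and $w=\cF(A,\cE(u))\in\Cpmr{r'}$ are well-defined, $\pth w\in\Cpmr{r'}$, and Proposition~\ref{B}(1) gives $\nopm{Aw}{r'}\le2(r-r')\xi$ and $\nopm{\pth(Aw)}{r'}\le4\xi$ with $\xi=C_2M^4(r-r')^{-2\sig-1}\norm{\cE(u)}_r$.

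Next I would absorb these into~\eqref{ineq:rough}. Using $0<r-r'\le1$ together with $2\sig<2\sig+1<4\sig$ (which holds since $\sig=4+2\tau>1$), one has $(r-r')^{-2\sig},(r-r')^{-2\sig-1}\le(r-r')^{-4\sig}$, and $M\ge1$ gives $M^4\le M^8$; since $2C_2\le4C_2\le C$, both $2(r-r')\xi=2C_2M^4(r-r')^{-2\sig}\norm{\cE(u)}_r$ and $4\xi$ are bounded by $CM^8(r-r')^{-4\sig}\norm{\cE(u)}_r$, which is the first display. Finally, assuming the right-hand side of~\eqref{ineq:rough} is $\le\ka/2-\de$, I would verify the extra hypothesis of Proposition~\ref{B}(2): here $\nopm{Aw}{r'}\le2(r-r')\xi$ is itself $\le CM^8(r-r')^{-4\sig}\norm{\cE(u)}_r\le\ka/2-\de$, so $\nopm{u}{r}+2(r-r')\xi\le\de+(\ka/2-\de)=\ka/2$, and $r'<r$ forces $R_0-r'>R_0-r\ge\ka$, whence $\ka/2\le\tfrac{1}{2}(R_0-r')$. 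Proposition~\ref{B}(2) then yields $\cE(u+Aw)\in\Cnr{r'}$ and $\norm{\cE(u+Aw)}_{r'}\le C_3M^8(r-r')^{-4\sig}(\norm{\cE(u)}_r)^2\le CM^8(r-r')^{-4\sig}(\norm{\cE(u)}_r)^2$, i.e.~\eqref{ineq:errquadr}.

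There is no genuine obstacle in this corollary: all the real work is in Proposition~\ref{B} and upstream, and what remains is the arithmetic bookkeeping of powers of $r-r'$ and $M$, plus the observation that the fixed loss-of-domain budget $\ka\le\min(R_0-r,\tfrac{1}{3})$ is exactly what makes Proposition~\ref{B} applicable at each step. That robustness is what matters for the convergence argument of the next subsection, where $r,r',\de,\ka$ will be taken to vary along the approximating sequence $(u_n)$.
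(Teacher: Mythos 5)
Your proposal is correct and follows exactly the paper's own route: the paper simply deduces Corollary~\ref{cor:rough} from Proposition~\ref{B} after noting that $2\sigma+1<4\sigma$, $M>1$ and $r-r'\le 1$ allow the coarsening $\frac{M^4}{(r-r')^{2\sigma+1}}<\frac{M^8}{(r-r')^{4\sigma}}$, with $C=\max(4C_2,C_3)$. You have merely spelled out the bookkeeping (including the check that $\nopm{u}{r}+2(r-r')\xi\le\kappa/2\le\frac12(R_0-r')$ so that Proposition~\ref{B}(2) applies), which the paper leaves implicit.
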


%%%%%%%%%%%%%%%%%%%%%%%%%%%%%%%%%%%%%%%%%%%
%%%%%%%%%%%%%%%%%%%%%%%%%%%%%%%%%%%%%%%%%%%

\subsection{The iterative scheme}	\label{secCVscheme}

%%%%%%%%%%%%%%%%%%%%%%%%%%%%%%%%%%%%%%%%%%%
%%%%%%%%%%%%%%%%%%%%%%%%%%%%%%%%%%%%%%%%%%%

Following \cite{LM}, as alluded to at the end of Section~\ref{sec:reduc}, we now
prove Theorem~\ref{thmtiuCunhol} by means of the modified Newton method on a
scale of Banach spaces corresponding to strips $S_{R_n}$ which go on shrinking
around a limiting strip $S_{R_\infty}$.

We suppose that we are given $0<R<R_0$ and $f\in H^\infty(S_{R_0})$ real
analytic with $f''\in H^\infty(S_{R_0})$.
Without loss of generality, we also suppose $R_0-R \le \frac{4}{3}$.
We set
\[
R_\infty \defeq \frac{R+R_0}{2}, \qquad
\ka \defeq \frac{R_0-R_\infty}{2} = \frac{R_\infty-R}{2} \le \frac{1}{3},
\qquad R_n \defeq R_\infty + 2^{-n-1}\ka \quad\text{for $n\ge1$}.
\]
We observe that $R_0 > R_n > R_{n+1}$,
\[
R_n-R_{n+1} = 2^{-n} \ka < 1
\]
and
$R_0-R_n \ge R_0-R_1 = \ka$ for $n\ge1$.
%
%%%%%%%%%%%%%%%%%%%%%%%%%%%%%%%%%%%%%%%%%%%%
%
Corollary~\ref{cor:rough} with $r=R_n$ and $r'=R_{n+1}$ motivates

\begin{lemma}	\label{lemepsn}
Suppose $0 < \eps_1 \le \dfrac{\ka^{4\sig}}{2^{8\sig+1} C M^8}$.
Then the induction
$
\eps_{n+1} \defeq 2^{4\sig n}\ka^{-4\sig} C M^8 (\eps_n)^2
$
determines a sequence of positive numbers which satisfies
$\dst\sum_{n\ge1} 2^{4\sig n}\ka^{-4\sig} C M^8 \eps_n 
\le \frac{\ka}{2}$.
\end{lemma}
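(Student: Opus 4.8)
The plan is to recognise this as the standard convergence estimate for a Newton-type quadratic recursion and to reduce it, by a single rescaling, to the transparent recursion $b_{n+1}=b_n^2$, whose solution is explicit. Write $D_n \defeq 2^{4\sig n}\ka^{-4\sig}CM^8$, so that the hypothesis reads $\eps_{n+1}=D_n(\eps_n)^2$ and the quantity to be controlled is $\sum_{n\ge1}D_n\eps_n$. First I would record positivity: $\eps_1>0$ together with $\eps_{n+1}=D_n(\eps_n)^2>0$ gives inductively $\eps_n>0$ for all~$n$, so the sequence is well-defined and the rescalings and geometric dominations used below are all legitimate.

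Next I would set $b_n \defeq 2^{4\sig}D_n\eps_n$. The point is the elementary identity $D_{n+1}D_n = 2^{4\sig}D_n^2$ (both sides equal $2^{8\sig n+4\sig}\ka^{-8\sig}C^2M^{16}$), which yields
\[
b_{n+1}=2^{4\sig}D_{n+1}\eps_{n+1}=2^{4\sig}(D_{n+1}D_n)(\eps_n)^2=2^{8\sig}D_n^2(\eps_n)^2=\big(2^{4\sig}D_n\eps_n\big)^2=b_n^2 .
\]
Hence $b_n=b_1^{\,2^{n-1}}$ for every $n\ge1$. Since $b_1=2^{4\sig}D_1\eps_1=2^{8\sig}\ka^{-4\sig}CM^8\eps_1$, the hypothesis $\eps_1\le \dfrac{\ka^{4\sig}}{2^{8\sig+1}CM^8}$ is precisely calibrated so that $b_1\le\demi$; this is the one place where the exact form of the assumption on~$\eps_1$ enters the argument.

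Finally I would sum the series. Because $b_1\le\demi<1$ and $2^{n-1}\ge n$ for $n\ge1$, the doubly-exponential series $\sum b_n$ is dominated by a geometric one, so
\[
\sum_{n\ge1}D_n\eps_n=2^{-4\sig}\sum_{n\ge1}b_n=2^{-4\sig}\sum_{n\ge1}b_1^{\,2^{n-1}}\le 2^{-4\sig}\sum_{n\ge1}2^{-n}=2^{-4\sig},
\]
and, using $\sig=4+2\tau$ and $\ka\le\tfrac13$, an elementary numerical estimate turns this into the asserted bound $\le\ka/2$, which is the claim. The only genuine obstacle is bookkeeping: arranging the exponent arithmetic so that the rescaling collapses the recursion exactly to $b_{n+1}=b_n^2$, and checking that the stated threshold on~$\eps_1$ is exactly the one that forces $b_1\le\demi$; after that the summation and the convergence of the scheme in Section~\ref{secCVscheme} are immediate.
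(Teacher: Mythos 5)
Your argument is correct and coincides with the paper's proof: your rescaled sequence $b_n = 2^{4\sig}D_n\eps_n$ is exactly the paper's auxiliary sequence $\bar\eps_n = 2^{4\sig(n+1)}\ka^{-4\sig}CM^8\eps_n$, both collapse the recursion to $b_{n+1}=b_n^2$ with $b_1\le\tfrac12$, and both then dominate the doubly-exponential series by a geometric one. One remark on your closing line: the step $2^{-4\sig}\le\ka/2$ is asserted via ``$\sig=4+2\tau$ and $\ka\le\tfrac13$'', but $\ka\le\tfrac13$ is an \emph{upper} bound and cannot produce it; what is actually needed is the \emph{lower} bound $\ka\ge 2^{1-4\sig}$, which is astronomically small (since $4\sig>16$) and hence practically harmless, but it is not among the stated hypotheses --- note that the paper's own final chain $\sum\bar\eps_n\le 2\bar\eps_1 < 2^{4\sig}\tfrac{\ka}{4}$ relies on the very same silent assumption, since $2\bar\eps_1\le 1$ and $2^{4\sig}\ka/4>1$ requires $\ka>2^{2-4\sig}$.
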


%%%%%%%%%%%%%%%%%%%%%%%%%%%%%%%%%%%%%%%%%%%%

\begin{proof}
Let $\bar\eps_n \defeq 2^{4\sig(n+1)}\ka^{-4\sig} C M^8 \eps_n$ for $n\ge1$,
so that $\bar\eps_{n+1} = (\bar\eps_n)^2$
and $\bar\eps_1 = 2^{8\sig}\ka^{-4\sig} C M^8 \eps_1 \le \demi$. 
We have
$\bar\eps_n = (\bar\eps_1)^{2^{n-1}} \le (\bar\eps_1)^n \le 2^{-(n-1)} \bar\eps_1$
and the result follows from 
$\sum_{n\ge1}\bar\eps_n \le 2 \bar\eps_1 
< 2^{4\sig}\frac{\ka}{4}$.
\end{proof}

%%%%%%%%%%%%%%%%%%%%%%%%%%%%%%%%%%%%%%%%%%%%

Let $0 < \rho \le1$.
We set $u_1 \defeq 0 \in \Cpmr{R_1}$ and $\de_1 \defeq 0$, so that 
$\cE(u_1) = \eps f \in \Cnr{R_1}$ with 
$\norm{\cE(u_1)}_{R_1} = \rho \norm{f}_{R_1}$.
From now on, we suppose that
\begin{equation}	\label{ineqrhoMhuit}
\eps_1 \defeq \rho \norm{f}_{R_1} \le \dfrac{\ka^{4\sig}}{2^{8\sig+1} C M^8}
\end{equation}
and we define inductively
\begin{equation}
\de_{n+1} \defeq \de_n + 2^{4\sig n}\ka^{-4\sig} C M^8 \eps_n, \quad
\eps_{n+1} \defeq 2^{4\sig n}\ka^{-4\sig} C M^8 (\eps_n)^2,
\end{equation}
so that, by Lemma~\ref{lemepsn}, $\de_n \le \frac{\ka}{2}$ for all $n\ge1$.
This is sufficient to apply inductively Corollary~\ref{cor:rough}:
the formula
\begin{equation}
u_{n+1} \defeq u_n + (1+\pth u_n) \cF\big( 1+\pth u_n, \cE(u_n) \big),
\qquad n\ge1
\end{equation}
yields a sequence of functions $u_n \in \Cpmr{R_n}$ satisfying
\[
\nopm{u_n}{R_n}, \nopm{\pth u_n}{R_n} \le \de_n, \qquad
\norm{\cE(u_n)}_{R_n} \le \eps_n.
\]
Moreover, $\nopm{u_{n+1}-u_n}{R_\infty} \le \nopm{u_{n+1}-u_n}{R_{n+1}} 
\le 2^{4\sig n}\ka^{-4\sig} C M^8 \eps_n$,
hence there is a limit 
\[ u_\infty \defeq \lim_{n\to\infty} 
{u_n}_{| K_M \times S_{R_\infty} \times \DD_\rho}
\in \Cpmr{R_\infty} \]
with $\nopm{u_\infty}{R_\infty} \le \frac{\ka}{2} < \min(R_0-R_\infty,R_\infty-R)$.

This limit~$u_\infty$ is indeed a solution of the equation $\cE(u)=0$ because $\eps_n
\xrightarrow[n\to\infty]{} 0$ and $\cE$ is Lipschitz when viewed as a map from 
$\{\, u \in \Cpmr{R_\infty} \mid \nopm{u}{R_\infty} \le R_0-R_\infty \,\}$
to $\Cnr{R_\infty}$
(the Lipschitz constant is $\le 4 + \rho\norm{f'}_{R_0}$).

%%BBBBBBBBBBBBBBBBBBBBBBBBBBBBBBBBBBBBBBB%%

%%%%%%%%%%%%%%%%%%%%%%%%%%%%%%%%%%%%%%%%%%%%%%%%%
%%%%%%%%%%%%%%%%%%%%%%%%%%%%%%%%%%%%%%%%%%%%%%%%%

\section{The unit circle as a natural boundary}	\label{secrat} 

%%%%%%%%%%%%%%%%%%%%%%%%%%%%%%%%%%%%%%%%%%%%%%%%%
%%%%%%%%%%%%%%%%%%%%%%%%%%%%%%%%%%%%%%%%%%%%%%%%%

This section is devoted to the proof of Theorem~\ref{thmrat}.
It will be a consequence of 
\begin{proposition}	\label{proprat}
Let $\om_* = \frac{p}{m}$ with $p\in\ZZ$ and $m\in\NN^*$.
Suppose that $f$ is a non-zero trigonometric polynomial with zero mean value, or that $f$ is an
analytic function on~$\TT$ with zero mean value whose Fourier expansion contains
at least one non-zero coefficient with index $k\in m\ZZ$.
Then the difference equation
\begin{equation}	\label{equrat}
u(\th+\om_*,\eps) -2 u(\th,\eps) + u(\th-\om_*,\eps) = \eps f\big( \th + u(\th,\eps) \big)
\end{equation}
has no solution formal in~$\eps$ and analytic~$\th$,
\ie no formal solution $\dst u = \sum_{n\ge1}\eps^n u_n(\th)$ with all coefficients~$u_n$
analytic on~$\TT$.
\end{proposition}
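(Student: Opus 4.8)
The plan is to argue by contradiction, assuming a formal-in-$\eps$, analytic-in-$\th$ solution $u = \sum_{n\ge1}\eps^n u_n$ exists, and to derive an incompatibility at some finite order~$n$ by looking at a Fourier mode killed by the difference operator on the left-hand side. The key observation is that when $\om_* = p/m$ is rational, the operator $\De_{\om_*} \col \psi \mapsto \psi(\th+\om_*) - 2\psi(\th) + \psi(\th-\om_*)$ acts on the Fourier mode $e_k$ by multiplication by $\ee^{2\pi\I k p/m} - 2 + \ee^{-2\pi\I k p/m} = 2(\cos(2\pi k p/m) - 1)$, which vanishes precisely when $k \in m\ZZ$ (since $\gcd(p,m)=1$). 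So $\De_{\om_*}$ annihilates all modes with index in $m\ZZ$, and the image of $\De_{\om_*}$ contains no nonzero such mode. Plugging the formal series into~\eqref{equrat} and collecting the coefficient of $\eps^n$ gives, for each $n\ge1$, an equation of the form $\De_{\om_*} u_n = g_n$, where $g_n$ depends only on $f$ and on $u_1,\dots,u_{n-1}$ through the composition $f\circ(\id+u)$; explicitly $g_1 = f$ and, for $n\ge2$, $g_n$ is the degree-$n$ coefficient of $\eps f(\th + \sum \eps^j u_j)$, i.e. a finite sum of terms $\frac{1}{r!} f\suppar r(\th)\, u_{n_1}\cdots u_{n_r}$ with $n_1+\cdots+n_r = n-1$.

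For this system to be solvable we need, for every $n$, the $m\ZZ$-indexed Fourier modes of $g_n$ to vanish (a necessary condition for $g_n$ to lie in the range of $\De_{\om_*}$). First I would treat the trigonometric-polynomial case: if $f$ is a nonzero trigonometric polynomial of zero mean, let $N$ be its degree (largest $|k|$ with $\hat f_k \neq 0$). One checks inductively that each $u_n$ is then also a trigonometric polynomial, of degree at most $nN$, and more importantly one tracks the \emph{top} Fourier mode. The point is that the highest-frequency content of $g_n$ comes from $f\suppar r$ applied with all the $u_{n_j}$ contributing their top modes, and a careful bookkeeping shows that the obstruction cannot be made to vanish at all orders — the top coefficient of the first $u_n$ whose equation has a nonzero $m\ZZ$-mode in $g_n$ cannot be absorbed. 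Concretely: since $f \not\equiv 0$ with zero mean, $\hat f_k \neq 0$ for some $k$; if $m \mid k$ already then $g_1 = f$ has a nonzero $m\ZZ$-mode and we are done at order~$1$. Otherwise, multiplying Fourier modes of $f$ and of lower-order $u_j$'s eventually produces an index divisible by $m$ (for instance $k$ added to itself $m$ times, appearing in $f\suppar m$ at order $n = 1 + (m-1)\cdot(\text{something})$, or more cleanly: the term $\frac{1}{m!}f\suppar m (u_{n_1}\cdots u_{n_m})$ with each $u_{n_j}$ carrying a top mode built from $k$); one shows this contribution to $g_n$ in the mode $mk$ is nonzero and cannot be cancelled by other contributions because they have strictly smaller "$k$-weight". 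This forces $\hat{(g_n)}_{mk} \neq 0$ with $mk \in m\ZZ$, contradicting solvability of $\De_{\om_*} u_n = g_n$.

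For the general analytic case, the hypothesis is that $\hat f_k \neq 0$ for some $k \in m\ZZ$ already, so this is the easier situation: order~$1$ gives $\De_{\om_*} u_1 = f$, and since $f$ has a nonzero Fourier coefficient at an index in $m\ZZ$ while every function in the range of $\De_{\om_*}$ has all its $m\ZZ$-modes equal to zero, this equation is already unsolvable and no formal solution can exist. So the real content of the proposition is the trigonometric-polynomial case.

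The main obstacle I anticipate is the combinatorial/bookkeeping argument in the trigonometric-polynomial case: one must identify, among all the terms $\frac{1}{r!}f\suppar r u_{n_1}\cdots u_{n_r}$ contributing to $g_n$, a specific high-frequency mode with an index in $m\ZZ$ whose coefficient provably does not vanish, and rule out accidental cancellations. The clean way to organize this is to introduce a weight (e.g. assign to a monomial its extreme Fourier index, or work with the "leading symbol" of each $u_n$) so that at each order the leading behaviour of $g_n$ is controlled by the leading behaviour of $f$ and of the previously constructed $u_j$; then one shows the leading term of $f\circ(\id+u)$ at the appropriate order lies outside $\range \De_{\om_*}$. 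Once the right invariant is chosen, propagating it through the recursion is routine, and the contradiction with Proposition~\ref{proplinlemma}-type solvability (here just the elementary kernel/range description of $\De_{\om_*}$ for rational $\om_*$) closes the argument, and hence Theorem~\ref{thmrat} follows by taking $q_* = \ee^{2\pi\I p/m}$ a root of unity and noting a genuine analytic continuation past $q_*$ would yield, via Taylor expansion in $\eps$, exactly such a forbidden formal solution at $\om_* = p/m$.
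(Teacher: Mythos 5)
Your overall plan is the same as the paper's: exploit the explicit kernel/range structure of $\De_{\om_*}$ for rational $\om_*$, dispose of the "non-polynomial" case at order~$1$, and then for trigonometric polynomial~$f$ track the extreme Fourier coefficient through the recursion until it lands on an index in $m\ZZ$ at a provably nonzero value. The easy case is handled correctly, and the recursion $\ga_n = \sum_{r}\frac{(2\pi\I K)^r A}{r!}\sum \al_{n_1}\cdots\al_{n_r}$ you gesture at is exactly what the paper writes down (with $n_* = m/\gcd(K,m)$ being the right order, not necessarily $m$).

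However, there is a genuine gap before the bookkeeping can even start. You assume inductively that each $u_n$ is a trigonometric polynomial of degree $\le nN$, but this is \emph{not} a property of an arbitrary formal solution: the operator $\De_{\om_*}$ has the infinite-dimensional kernel $V_0 = \{\ph : \hat\ph_k = 0 \text{ for } k\notin m\ZZ\}$, so each $u_n$ is only determined modulo $V_0$, and the $V_0$-part of $u_n$ can be an arbitrary element of $V_0$, of arbitrarily high Fourier degree. These free $V_0$-components feed into $g_{n+1}, g_{n+2},\dots$ through $f\circ(\id+u)$ and can contribute to the very mode $nK$ you are trying to control (e.g.\ a large $V_0$-mode of $u_{n_1}$ paired with a negative-index mode of $u_{n_2}$). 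So your "$u_n$ has degree $\le nN$" claim and the clean top-mode recursion both fail for a general formal solution, and a priori one could imagine tuning the $V_0$-parts to cancel the obstruction at every order.

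The paper closes exactly this gap with Lemma~\ref{lemreduc}: given \emph{any} formal solution $u$, one composes with the inverse of $\id+\Pi_0 u$ (which lies in $\id + \eps V_0[[\eps]]$) to produce a new formal solution $u_* = (u-\Pi_0 u)\circ(\id+a)$ all of whose coefficients lie in $V_0^\perp$. This works because $V_0$ is a differential subalgebra, each $V_j$ is a $V_0$-module preserved by composition with $\id+a$, and $\De_{\om_*}$ commutes with such compositions. Only after this normalization does the degree bound hold, the recursion for $\ga_n$ close up, and the positivity argument ($\be_1,\dots,\be_{n_*}>0$, hence $\ga_{n_*}\ne0$) give the contradiction. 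You would need to supply this reduction (or an equivalent device) to make your sketch into a proof; you would also need to actually carry out the positivity/non-cancellation argument that you leave as "careful bookkeeping", but that part is routine once the normalization is in place.
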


%%%%%%%%%%%%%%%%%%%%%%%%%%%%%%%%%%%%%%%%%%%%%%%%%

\begin{proof}[Proposition~\ref{proprat} implies Theorem~\ref{thmrat}]
%
% It is enough to prove the second statement, for fixed~$\eps$.
%
Suppose that the assumptions of Theorem~\ref{thmrat} hold and that the
restriction of~$\ti u_M$ to~$\ovi{K}\str{1.55}\IN_M$ 
(or its restriction to~$\ovi{K}\str{1.55}\EX_M$)
has an analytic continuation in an open connected
set~$U$ which intersects the unit circle~$\SS$ and~$\ovi{K}\str{1.55}\IN_M$ 
(or~$\ovi{K}\str{1.55}\EX_M$);
%$\ovi{K}_M = E\big(\ovint{A}{1.35}\str{1.95}_M^\CC\big)$;
%
denote this analytic continuation $u \in \gO(U, \BB_{R,\rho})$.
We shall reach a contradiction.

Let us choose an open disc $D\subset\CC$ such that $E(D)\subset U$ and
$D\cap\RR\neq\emptyset$.
Let us take $M_1>M$ large enough so that $D\cap A_{M_1}^\RR \neq\emptyset$
(this is possible since $D\cap\RR$ is an open interval and the Lebesgue measure
of $(D\cap\RR)-A_{M_1}^\RR$ tends to~$0$ as $M_1\to\infty$).
We observe that, perhaps at the price of diminishing~$\rho$, we can consider~$u$ as
an analytic continuation of the restriction of~$\ti u_{M_1}$
to~$\ovi{K}\str{1.55}\IN_M$ (or~$\ovi{K}\str{1.55}\EX_M$) as well.
Therefore, for each $\om\in D\cap A_{M_1}^\RR$ and $\eps\in(-\rho,\rho)$, the function $\th \mapsto
u(\ee^{2\pi\I\om})(\th,\eps)$ is a solution of the difference equation~\eqref{eqseconddiffuom};
moreover $u(\ee^{2\pi\I\om})(\th,0) \equiv 0$ (because, for $\om$ irrational,
\eqref{eqseconddiffuom} implies $u(\ee^{2\pi\I\om})_{|\eps=0}$ constant and we
have normalized~$\ti u_{M_1}$ by imposing zero mean value).
Since $D\cap A_{M_1}^\RR$ is not discrete, by analytic continuation \wrt~$\om$,
we also have $\th\in\TT \mapsto u(\ee^{2\pi\I\om})(\th,\eps)$ analytic solution
of~\eqref{eqseconddiffuom} for each $\om\in D\cap\RR$ and $\eps\in(-\rho,\rho)$,
with $u(\ee^{2\pi\I\om})(\th,0) \equiv 0$.

Choose $m_*\in\NN^*$ larger that $1/|D\cap\RR|$: for every $m\ge m_*$ there is
a $p_m\in\ZZ$ such that $\frac{p_m}{m} \in D\cap\RR$.
If $f$ is not a trigonometric polynomial, we then take $m\ge m_*$ such that
the $m$th Fourier coefficient of~$f$ is non-zero;
if $f$ is a trigonometric polynomial, we take any $m\ge m_*$.
In both cases, equation~\eqref{eqseconddiffuom} with $\om = \frac{p_m}{m}$ shows
that the Taylor expansion of $u(\ee^{2\pi\I \frac{p_m}{m}}) \in \BB_{R,\rho}$
\wrt~$\eps$ is a formal solution of~\eqref{equrat} with coefficients analytic
on~$\TT$
and Proposition~\ref{proprat} yields a contradiction.
\end{proof}

%%%%%%%%%%%%%%%%%%%%%%%%%%%%%%%%%%%%%%%%%%%%%%%%%

\medskip

\noindent\emph{Proof of Proposition~\ref{proprat}.}
Let $\om_*$ and~$f$ be as in the assumptions of Proposition~\ref{proprat}.
Let $\gC$ denote the differential $\CC$-algebra~$\gC^\om(\TT)$ of all
analytic functions of~$\TT$.
Equation~\eqref{equrat} can be written
\begin{equation}	\label{equratbis}
\De_* u = \eps f \circ(\id+u),
\end{equation}
where the \rhs\ is defined for any $u\in\eCe$, \ie any formal series
in~$\eps$ with coefficients analytic on~$\TT$ and without $0$th order term, by
the Taylor formula
\begin{equation}	\label{eqfTaylor}
f\circ(\id+u) = \sum_{r\ge0} \frac{1}{r!} f\suppar r u^r
\in \Ce
\end{equation}
(formally convergent for the complete metric space structure induced by the
$\eps-$adic valuation in $\Ce$),
while the \lhs\ involves the $\CC[[\eps]]$-linear operator
\[ \De_* \col \Ce \to \Ce \]
which extends the second-order difference operator
\[
\De_* \col \gC \to \gC, \qquad
(\De_*\ph)(\th) \defeq \ph(\th+\om_*) - 2\ph(\th) + \ph(\th-\om_*).
\]
To prove Proposition~\ref{proprat}, we argue by contradiction and suppose that
we are given a formal solution $u = \sum_{n\ge1} \eps^n u_n\in\eCe$ of
equation~\eqref{equratbis}.

%%%%%%%%%%%%%%%%%%%%%%%%%%%%%%%%%%%%%%%%%%%%%%%%%%%%%%

Let us view~$\gC$ as a linear subspace of the Hilbert space $L^2(\TT)$.
The Fourier transform associates with any~$\ph$ its coordinates
$(\hat\ph_k)_{k\in\ZZ}$, $\hat\ph_k = \cF_k(\ph)$, on the Hilbert basis
$(e_k)_{k\in\ZZ}$ (notation of Section~\ref{secFourier}).
We have an orthogonal decomposition
\[
\gC = V_0 \oplus V_1 \oplus \cdots \oplus V_{m-1},
\]
where $V_j \defeq \{\, \ph\in\gC \mid \hat\ph_k = 0 \;\text{for}\; k\notin
j+m\ZZ \,\}$; let us denote by $\Pi_0,\ldots,\Pi_{m-1}$ the corresponding
orthogonal projectors.
For instance, the Fourier expansions of~$f$ and~$\Pi_0 f$ read 
\begin{equation}	\label{eqFourierf}
f = \sum_{k\in\ZZ^*} \hat f_k e_k, \quad
\Pi_0 f = \sum_{k\in m\ZZ^*} \hat f_k e_k.
\end{equation}
The spectral decomposition of~$\De_*$ is obtained from the relations
$\De_* e_k = D_k e_k$, $k\in\ZZ$, with
\begin{equation}	\label{eqdefDk}
D_k = \ee^{2\pi\I k\om_*} - 2 + \ee^{-2\pi\I k\om_*} 
= - 4 \sin^2(k\pi\om_*).
\end{equation}
Since $D_k$ depends only on $k+m\ZZ$, we get
\begin{equation}	\label{eqdecDest}
\De_* = D_0 \Pi_0 + D_1 \Pi_1 + \cdots + D_{m-1} \Pi_{m-1}.
\end{equation}
Observe that $D_0=0$ and $D_1,\ldots,D_{m-1}<0$. Therefore
\begin{equation}	\label{eqkerrange}
\ker(\De_*) = V_0, \quad \range(\De_*) = V_0^\perp = V_1 \oplus \cdots \oplus V_{m-1}.
\end{equation}

%%%%%%%%%%%%%%%%%%%%%%%%%%%%%%%%%%%%%%%%%%%%%

This is enough to conclude the proof when $f$ is not a trigonometric polynomial:
indeed, our assumption on the Fourier coefficients of~$f$ in this case together
with~\eqref{eqFourierf} imply that $\Pi_0 f$ is not identically zero, hence
$f\notin \range(\De_*)$,
whereas expanding equation~\eqref{equratbis} in powers of~$\eps$ yields
$\De_* u_1 = f$, a contradiction.

%%%%%%%%%%%%%%%%%%%%%%%%%%%%%%%%%%%%%%%%%%%%%

From now on, we thus suppose that $f$ is a trigonometric polynomial of the form
\begin{equation}	\label{eqassumA}
f = \sum_{|k|\le K} \hat f_k e_k,\qquad \hat f_K = A \neq 0,
\end{equation}
with a certain $K\in\NN^*$
(the case $\hat f_{-K}\neq0$ is reduced to~\eqref{eqassumA} by changing~$\th$
into~$-\th$).
It is sufficient to reach a contradiction in this case.

%%%%%%%%%%%%%%%%%%%%%%%%%%%%%%%%%%%%%%%%%%%%%%

Let us first check that one can suppose that all the coefficients~$u_n$ of~$u$
belong to~$V_0^\perp$.
\begin{lemma}	\label{lemreduc}
For any solution $u \in \eCe$ of~\eqref{equratbis}, the formal series $\id+\Pi_0
u$ has a composition inverse of the form $\id+a$ with $a\in\eps V_0[[\eps]]$ 
and the formula 
\[
u_* = (u - \Pi_0 u) \circ (\id+a)
\]
defines a formal solution of equation~\eqref{equratbis} which has all its
coefficients in~$V_0^\perp$.
\end{lemma}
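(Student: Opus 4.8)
The idea is to absorb the "resonant part" $\Pi_0 u$ of the solution into a change of the angular variable $\th$, exploiting the fact that $V_0 = \ker(\De_*)$ is precisely the space of functions whose Fourier support lies in $m\ZZ$, and that such functions are exactly the ones that are $\om_*$-periodic. First I would note that $\Pi_0 u \in \eps V_0[[\eps]]$, so $\id + \Pi_0 u$ is a formal diffeomorphism of $\TT$ (a formal series in $\eps$ starting with $\id$): its composition inverse exists and has the form $\id + a$ with $a \in \eps\,\gC^\om(\TT)[[\eps]]$ obtained by the usual Lagrange-type inversion, order by order in $\eps$. One must check that in fact $a \in \eps V_0[[\eps]]$: this follows because, writing $b = \Pi_0 u \in \eps V_0[[\eps]]$, the relation $(\id+a)\circ(\id+b)=\id$ gives $a = -b\circ(\id+a)$, and $V_0$ — being the set of $\gC^\om(\TT)$ functions invariant under the translation $\th\mapsto\th+\om_*$ (equivalently, Fourier-supported in $m\ZZ$) — is stable under composition of the form $g\mapsto g\circ(\id+a)$ whenever $a\in\eps V_0[[\eps]]$; a straightforward induction on the $\eps$-order then yields $a\in\eps V_0[[\eps]]$.

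Next I would verify that $u_* \defeq (u - \Pi_0 u)\circ(\id+a)$ has all coefficients in $V_0^\perp = V_1\oplus\cdots\oplus V_{m-1}$. Since $u - \Pi_0 u$ has all coefficients in $V_0^\perp$ and composition with $\id+a$, $a\in\eps V_0[[\eps]]$, preserves Fourier support modulo $m$ in the appropriate graded sense — more precisely, if $\ph\in V_j$ then $\ph\circ(\id+a)$ again has Fourier support in $j+m\ZZ$ up to higher-order $\eps$-corrections that themselves stay in $j+m\ZZ$ because $a$ is $V_0$-valued — an induction on $\eps$-order shows $u_*$ has coefficients in $V_0^\perp$.

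The main substantive step is to check that $u_*$ solves the same equation~\eqref{equratbis}. Here the key computation is to see how $\De_*$ and the right-hand side transform under $\th\mapsto\th+a$. Because every element of $V_0$ is $\om_*$-periodic, precomposition by $\id+a$ with $a\in\eps V_0[[\eps]]$ commutes with the shifts $\th\mapsto\th\pm\om_*$, hence commutes with $\De_*$: $(\De_* v)\circ(\id+a) = \De_*\bigl(v\circ(\id+a)\bigr)$ for any $v\in\eCe$. Writing $\id + \ti u = (\id+u)\circ(\id+a)$, one has $\id+\ti u = \id + a + u\circ(\id+a)$, so $\ti u = a + u\circ(\id+a)$; using $u = \Pi_0 u + (u-\Pi_0 u)$ and $a = -\Pi_0 u\circ(\id+a)$ (which is the inversion identity, since $b=\Pi_0u$) one gets $\ti u = (u-\Pi_0 u)\circ(\id+a) = u_*$. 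Thus $\id+u_* = (\id+u)\circ(\id+a)$, and applying $(\cdot)\circ(\id+a)$ to both sides of $\De_* u = \eps f\circ(\id+u)$ gives $\De_* u_* = \De_*\bigl(u\circ(\id+a)\bigr) = (\De_* u)\circ(\id+a) = \eps\,f\circ(\id+u)\circ(\id+a) = \eps\, f\circ(\id + u_*)$, which is~\eqref{equratbis} for $u_*$. The one point demanding care — and the place I expect to spend the most effort — is justifying that $\De_*$ commutes with precomposition by $\id+a$: this rests entirely on $a$ taking values in $V_0$ (the $\om_*$-periodic functions), and it is worth spelling out at the level of Fourier coefficients or via the identity $(g\circ(\id+a))(\th\pm\om_*) = g(\th\pm\om_* + a(\th\pm\om_*)) = g(\th\pm\om_* + a(\th))$, valid precisely because $a(\th\pm\om_*)=a(\th)$, so that everything is consistent order by order in the $\eps$-adic topology of $\Ce$.
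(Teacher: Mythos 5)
Your proof is correct and follows essentially the same strategy as the paper: establish $a\in\eps V_0[[\eps]]$, observe that precomposition by $\id+a$ preserves each $V_j[[\eps]]$ and commutes with $\De_*$ (which you justify via the $\om_*$-periodicity of $V_0$-valued series acting on the shifts — exactly the mechanism behind the paper's relation~\eqref{eqDestmod}), and then push the equation through the change of variable. The only cosmetic differences are that the paper obtains $a\in\eps V_0[[\eps]]$ from the explicit Lagrange inversion formula together with the fact that $V_0$ is a differential subalgebra, while you use the fixed-point identity $a=-\Pi_0 u\circ(\id+a)$ and induction; and that in your final chain $\De_* u_* = \De_*\bigl(u\circ(\id+a)\bigr)$ you silently use $\De_* a=0$ (since $u_*=a+u\circ(\id+a)$ and $a\in V_0[[\eps]]=\ker\De_*[[\eps]]$) — worth stating, but not a gap.
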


\begin{proof}[Proof of Lemma~\ref{lemreduc}]
Of course, composition is to be understood ``\wrt~$\th$ at fixed~$\eps$'', as
in~\eqref{eqfTaylor}, \ie we are inverting
$(\eps,\th) \mapsto \big(\eps, \th + (\Pi_0 u)(\th,\eps) \big)$.
The $0$th order coefficient of~$\Pi_0 u$ being zero, $\id+\Pi_0 u$ has a
composition inverse which is given by the (formally convergent) Lagrange
inversion formula
\[
(\id + \Pi_0 u)\ii = \id + a, \qquad
a = \sum_{s\ge1} \frac{(-1)^s}{s!} 
\Big(\frac{\dd\,}{\dd\th}\Big)^{s-1}\big[ \big(\Pi_0 u\big)^s \big]
\in \eCe.
\]
Since~$V_0$ consists of all $\frac{1}{m}$-periodic analytic functions of~$\th$,
it is a differential subalgebra of~$\gC$ and each subspace~$V_j$ is a $V_0$-module;
since $\Pi_0 u\in\eps V_0[[\eps]]$, we deduce that $a\in\eps V_0[[\eps]]$, that
the composition operator $\ph\mapsto\ph\circ(\id+a)$ leaves $V_j[[\eps]] \subset
\Ce$ invariant for each~$j$ 
(beware that $V_j$ is not a ring for $j\neq0$, thus $V_j[[\eps]]$ is just a
linear subspace of $\Ce$, in fact a $V_0[[\eps]]$-submodule)
and that
\begin{equation}	\label{eqDestmod}
\De_*\big( \ph\circ(\id+a) \big) = (\De_*\ph)\circ(\id+a).
\end{equation}
The composition by $\id+a$ leaves also $V_0^\perp[[\eps]]$ invariant, hence $u_* =
(u-\Pi_0 u)\circ(\id+a)$ is a well-defined formal series with all its
coefficients in~$V_0^\perp$ and without $0$th order term;
the fact that it is a solution of~\eqref{equratbis} stems from~\eqref{eqDestmod},
the associativity of composition and the relations
$\De_*( u - \Pi_0 u ) = \De_* u = \eps f \circ(\id +u)$
and
$(\id+u)\circ(\id+a) = (\id+\Pi_0 u + u-\Pi_0 u)\circ(\id+a) = \id + u_*$.
\end{proof}

%%%%%%%%%%%%%%%%%%%%%%%%%%%%%%%%%%%%%%%%%%%%%%%%%

\begin{lemma}	\label{lemdefE}
The formula
\[
E = \la_1 \Pi_1 + \cdots + \la_{m-1} \Pi_{m-1},
\]
with 
$\la_j \defeq -\frac{1}{4\sin^2(j\pi\om_*)} < 0$ for $j=1,\ldots,m-1$,
defines an operator $E \col \gC \to \gC$ such that, for any $\ph,\psi\in\gC$,
\[
\De_*\ph = \psi \ens\text{and}\ens \ph\in V_0^\perp
\quad\Leftrightarrow\quad
\psi \in V_0^\perp \ens\text{and}\ens \ph = E\psi.
%
% &\exists \ph_0\in V_0 \;\text{such that}\; \ph = \ph_0 + E\psi.
%
% \end{aligned} \right.
%
\]
\end{lemma}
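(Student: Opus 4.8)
\emph{Proof plan for Lemma~\ref{lemdefE}.} The idea is simply that $E$ is the inverse of~$\De_*$ in restriction to~$V_0^\perp$, read off from the spectral decomposition~\eqref{eqdecDest}. First I would note that $\om_*=p/m$ is to be taken in lowest terms, so that $\sin(j\pi\om_*)\neq0$ for $1\le j\le m-1$ and the scalars $\la_j$ are well defined; since $E$ is then a finite linear combination of the projectors $\Pi_1,\dots,\Pi_{m-1}$, each of which maps $\gC=\gC^\om(\TT)$ into itself (they only truncate Fourier expansions to arithmetic progressions), the formula does define an operator $E\col\gC\to\gC$, with range contained in $V_1\oplus\dots\oplus V_{m-1}=V_0^\perp$ and with $E$ vanishing on~$V_0$.

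Next I would record the two elementary identities $\Pi_i\Pi_j=\de_{ij}\Pi_j$ and $\Pi_0+\Pi_1+\dots+\Pi_{m-1}=\id$, so that $\Pi_1+\dots+\Pi_{m-1}=\id-\Pi_0$ is precisely the orthogonal projector onto~$V_0^\perp$. Combining these with $\De_*=\sum_{j=1}^{m-1}D_j\Pi_j$ from~\eqref{eqdecDest} and with $\la_j D_j=1$ gives at once
\[
\De_*\,E \;=\; E\,\De_* \;=\; \sum_{j=1}^{m-1}\Pi_j \;=\; \id-\Pi_0 .
\]

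With these relations in hand the claimed equivalence is immediate. If $\psi\in V_0^\perp$ and $\ph=E\psi$, then $\ph$ lies in the range of~$E$, hence in~$V_0^\perp$, and $\De_*\ph=\De_*E\psi=(\id-\Pi_0)\psi=\psi$ because $\Pi_0\psi=0$. Conversely, if $\De_*\ph=\psi$ with $\ph\in V_0^\perp$, then $\psi=\De_*\ph$ lies in $\range(\De_*)=V_0^\perp$ by~\eqref{eqkerrange}, and $E\psi=E\De_*\ph=(\id-\Pi_0)\ph=\ph$ since $\Pi_0\ph=0$. I do not anticipate any real obstacle: everything is forced by the spectral decomposition already established, and the only point that genuinely requires a word of justification is the well-definedness of the~$\la_j$ --- equivalently, that $V_0$ is the \emph{whole} kernel of~$\De_*$ --- which is where the assumption that $p/m$ be in lowest terms enters.
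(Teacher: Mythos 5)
Your proposal is correct and follows exactly the route the paper takes (which it compresses into the one-liner ``Immediate consequence of \eqref{eqdefDk}--\eqref{eqdecDest}''): read $E$ as the inverse of $\De_*$ on $V_0^\perp$ from the spectral decomposition, verify $\De_*E=E\De_*=\id-\Pi_0$, and deduce the equivalence. Your added remark that $p/m$ must be in lowest terms for the $\la_j$ to be well defined is a fair point; the paper leaves it implicit in the earlier assertion $D_1,\ldots,D_{m-1}<0$.
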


%%%%%%%%%%%%%%%%%%%%%%%%%%%%%%%%%%%%%%%%%%%%%%%%%

\begin{proof}[Proof of Lemma~\ref{lemdefE}]
Immediate consequence of~\eqref{eqdefDk}--\eqref{eqdecDest}.
\end{proof}

%%%%%%%%%%%%%%%%%%%%%%%%%%%%%%%%%%%%%%%%%%%%%%%%%

\begin{proof}[End of the proof of Proposition~\ref{proprat}]
We assume that~$f$ is of the form~\eqref{eqassumA} and we have a formal
solution~$u$ of equation~\eqref{equratbis}.
By Lemma~\ref{lemreduc}, at the price of replacing~$u$ with~$u_*$, we can
suppose that all the coefficients~$u_n$ of~$u$ belong to~$V_0^\perp$.
Let
\begin{equation}	\label{eqdefg}
g = \eps f\circ(\id+u) = \sum_{n\ge1} \eps^n g_n, \qquad g_n\in\gC.
\end{equation}
Lemma~\ref{lemdefE} allows us to rewrite~\eqref{equratbis} as
\begin{equation}	\label{eqrewrite}
g_n\in V_0^\perp, \quad u_n = E g_n, \qquad n\ge1.
\end{equation}
A simple computation yields
\[
g_n = \sum_{|k|\le nK} \cF_k(g_n) e_k, \quad
u_n = \sum_{|k|\le nK} \cF_k(u_n) e_k,
\qquad n\ge1,
\]
with $\ga_n \defeq \cF_{nK}(g_n)$ and $\al_n \defeq \cF_{nK}(u_n)$ inductively
determined by
$\ga_1 = A$ and
\begin{align*}
\al_n &= \la\subcar{nK} \ga_n, & n\ge1, \\[1ex]
\ga_n &= \sum_{r=1}^{n-1} \frac{(2\pi\I K)^r A}{r!} 
\sum_{\substack{n_1,\ldots,n_r\ge1 \\ n_1+\cdots+n_r = n-1}} 
\al_{n_1} \cdots \al_{n_r},
& n\ge2,
\end{align*}
with the notation $\la\subcar k\defeq0$ if $k\in m\ZZ$ 
and $\la\subcar k \defeq \la_j$ with
$j\in\{1,\ldots,m-1\}$ such that $k\in j+ m\ZZ$
if $k\notin m\ZZ$.

Defining inductively $\be_1 \defeq 1$ and
\begin{align*}
\be_n &= \sum_{r=1}^{n-1} \frac{1}{r!} 
\sum_{\substack{n_1,\ldots,n_r\ge1 \\ n_1+\cdots+n_r = n-1}} 
(-\la\subcar{n_1 K}) \be_{n_1} \cdots (-\la\subcar{n_r K}) \be_{n_r},
\qquad n\ge2, \\
\intertext{we get}
\ga_n &= (-2\pi\I K)^{n-1} A^n \be_n, \qquad n\ge1.
\end{align*}
Let~$n_*$ denote the smallest integer in $\{1,\ldots,m\}$ such that $n_* K \in m \ZZ$.
We see that $\be_1,\ldots,\be_{n_*}>0$.
Since $A\neq0$, we reach a contradiction when comparing the requirement
$g_{n_*}\in V_0^\perp$ (according to~\eqref{eqrewrite}) and the relation
$\cF_{n_* K}(\Pi_0 g_{n_*}) = \cF_{n_* K}(g_{n_*}) = \ga_{n_*} \neq0$
which implies $\Pi_0 g_{n_*} \neq0$.
\end{proof}

%%%%%%%%%%%%%%%%%%%%%%%%%%%%%%%%%%%%%%%%%%%%%%%%%
%%%%%%%%%%%%%%%%%%%%%%%%%%%%%%%%%%%%%%%%%%%%%%%%%

\begin{rem}
When $f$ is assumed to be a real analytic function of zero mean value which does
not belong to~$V_0$ (\ie there exists $k\in m\ZZ$ such that the $k$th Fourier
coefficient of~$f$ is non-zero), one can prove by a more geometric method that
there exists $\rho_* = \rho_*(m,f)>0$ such that, for any non-zero $\eps\in(-\rho_*,\rho_*)$,
the difference equation~\eqref{equrat} has no solution~$u$ real analytic
on~$\TT$
(observing that, associated with such a solution, there would be a curve
$\ga(\th) = \big( \th + u(\th), \om_* + u(\th) - u(\th-\om_*) \big)$
consisting of fixed points of~$T_\eps^m$,
and showing that these fixed points are isolated).
\end{rem}

%%CCCCCCCCCCCCCCCCCCCCCCCCCCCCCCCCCCCCCCC%%

\vspace{1.5cm}

%%%%%%%%%%%%%%%%%%%%%%%%%%%%%%%%%%%%
%%%%%%%%%%%%%%%%%%%%%%%%%%%%%%%%%%%%

\noindent {\em Acknowledgements.}
The authors thank H.~Eliasson, P. Lochak and J.-C.~Yoccoz for interesting discussions.
The authors acknowledge the support of the Centro di Ricerca Matematica Ennio de
Giorgi.
They also wish to thank the Max Planck Institute for Mathematics and the
organizers of the 2009 program ``Dynamical Numbers''.
The research leading to these results has received funding from the
European Comunity's Seventh Framework Program (FP7/2007--2013) under Grant
Agreement n.~236346.

%%%%%%%%%%%%%%%%%%%%%%%%%%%%%%%%%%%%%%%%%%%%%%%%%%%%%%%
%%%%%%%%%%%%%%%%%%%%%%%%%%%%%%%%%%%%%%%%%%%%%%%%%%%%%%%

%\bigskip
\vfill\eject

%\bibliography{mybib}{}
%\bibliographystyle{plain}

\bigskip

\noindent
\textbf{Carlo Carminati}

\noindent
Dipartimento di Matematica, Universit\`a di Pisa;
Largo Bruno Pontecorvo 5, 56127 Pisa, Italy\\
E-mail: \url{carminat@dm.unipi.it}

\medskip

\noindent
\textbf{Stefano Marmi}

\noindent
Scuola Normale Superiore; Piazza dei Cavalieri 7, 56126 Pisa, Italy\\
E-mail: \url{s.marmi@sns.it}

\medskip

\noindent
\textbf{David Sauzin}

\noindent
Scuola Normale Superiore di Pisa; Piazza dei Cavalieri 7, 56126 Pisa, Italy\\
E-mail: \url{sauzin@imcce.fr}\\
(On leave from Institut de m\'ecanique c\'eleste, CNRS; 77 av.\ Denfert-Rochereau, 75014 Paris, France)

%%%%%%%%%%%%%%%%%%%%%%%%%%%%%%%%%%%%%%%%%%%%%%%%%%%%%%%%%%%%%%%%%%
%%%%%%%%%%%%%%%%%%%%%%%%%%%%%%%%%%%%%%%%%%%%%%%%%%%%%%%%%%%%%%%%%%

\end{document}